\newtheorem{Thm}{Theorem}[section]
\newtheorem{Def}[Thm]{Definition}
\newtheorem{Lem}[Thm]{Lemma}
\numberwithin{equation}{section}
\newcommand{\1}{\mathbf{1}}
\newcommand{\R}{\mathbb{R}}
\newcommand{\Z}{\mathbb{Z}}
\renewcommand{\P}{\mathbf{P}}
\renewcommand{\Re}{\text{Re}}
\renewcommand{\Im}{\text{Im}}
\newcommand{\F}{\mathcal{F}}
\newcommand{\C}{\mathbb{C}}
\newcommand{\E}{\mathcal{E}}
\renewcommand{\L}{\mathcal{L}}
\renewcommand{\H}{\mathcal{H}}
\newcommand{\g}{\mathbf{g}}
\renewcommand{\b}{\mathbf{b}}
\newcommand{\ve}{\varepsilon}
\newcommand{\wt}{\widetilde}
\newcommand{\ol}{\overline}
\newcommand{\dis}{\displaystyle}
\newcommand{\pa}{\partial}
\newcommand{\na}{\nabla}
\newcommand{\de}{\delta}
\newcommand{\lam}{\lambda}
\newcommand{\wh}{\widehat}
\renewcommand{\Re}{\mathrm{Re}}
\renewcommand{\S}{\mathbb{S}}
\newcommand{\<}{\langle}
\renewcommand{\>}{\rangle}
\newcommand{\T}{\mathbb{T}}
\newcommand{\I}{\mathbf{I}}
\renewcommand{\P}{\mathbf{P}}
\begin{document}
	
	\title[Spectral gap formation with soft potentials]{Spectral Gap Formation to Kinetic Equations with Soft Potentials in Bounded Domain}
	
	\author[D.-Q. Deng]{Dingqun Deng}
	\address[D.-Q. Deng]{Beijing Institute of Mathematical Sciences and Applications and Yau Mathematical Science Center, Tsinghua Univeristy, Beijing, People's Republic of China}
	\email{dingqun.deng@gmail.com}
	
	\author[R.-J. Duan]{Renjun Duan}
	\address[R.-J. Duan]{Department of Mathematics, The Chinese University of Hong Kong, Shatin, Hong Kong,
		People's Republic of China}
	\email{rjduan@math.cuhk.edu.hk}
	
	\date{\today}

\begin{abstract}
It has been unknown in kinetic theory whether the linearized Boltzmann or Landau equation with soft potentials admits a spectral gap in the spatially inhomogeneous setting. Most of existing works indicate a negative answer because the spectrum of two linearized self-adjoint collision operators is  accumulated to the origin in case of soft interactions. In the paper we rather prove it in an affirmative way when the space domain is bounded with an inflow boundary condition. The key strategy is to introduce a new Hilbert space with an exponential weight function that involves  the inner product of space and velocity variables and also has the strictly positive upper and lower bounds. The action of the transport operator on such space-velocity dependent weight function induces an extra non-degenerate relaxation dissipation in large velocity that can be employed to compensate the degenerate spectral gap and hence give the exponential decay for solutions in contrast with the sub-exponential decay in either the spatially homogeneous case or the case of torus domain. The result reveals a new insight of hypocoercivity for kinetic equations with soft potentials in the specified situation. 
\end{abstract}

\date{\today}
\maketitle	
\tableofcontents	

\thispagestyle{empty}

\section{Introduction}

\subsection{Models and motivations}

In the paper we are concerned with the formation of the {\it spectral gap} in case of soft potentials for two classical collisional kinetic equations, the Boltzmann equation with or without angular cutoff and the Landau equation, in the spatially inhomogeneous setting where the space domain is bounded. Let $L$ be the standard linearized collision operator around the normalized global Maxwellian $\mu=\mu(v):=(2\pi)^{-3/2}\exp(-|v|^2/2)$ with $v=(v_1,v_2,v_3)\in \R^3$, defined by
\begin{equation}
	\label{lco}
	Lf= \mu^{-1/2}Q(\mu^{1/2}f,\mu) +  \mu^{-1/2}Q(\mu,\mu^{1/2}f),
\end{equation}
where $Q$ is a bilinear collision operator to be specified later. Then, in terms of basic properties of $Q$, the linearized operator $L$ is self-adjoint and non-positive definite on $L^2(\R^3_v)$ with a five dimensional kernel space $\ker L={\rm span}\{\mu^{1/2},v_i\mu^{1/2}\,(1\le i\le 3),|v|^2\mu^{1/2}\}$. The long time behavior of solutions to the spatially homogeneous linear problem
\begin{equation}
	\label{shlp}
	\partial_t f=Lf,\quad t>0,\ v\in \R^3
\end{equation}
leads to the study of the eigenvalue problem:
\begin{equation}
	\label{def.ep}
	Lf=\lambda f
\end{equation}
for the operator $L$, see Cercignani \cite{Cercignani1988}. As is well known, this problem has five eigenfunctions for $\lambda=0$; all the other eigenvalues are negative. Moreover, thanks to Carleman \cite{Carleman1957}, Grad \cite{Grad1963}, Caflisch \cite{Caflisch1980} and many others, we know that in the hard potential case, there exists a spectral gap for $L$ so that an exponential decay to equilibrium as $t\to\infty$ can be obtained, while the spectrum in the soft potential case is continuously connected to the origin so that the spectral gap is lost and degenerate, see Figure \ref{fig1} below.

\begin{figure}[htbp]
	\centering
	\subfigure[Hard potentials]{
		\begin{minipage}[t]{0.25\linewidth}
			\centering
			\begin{figure}[H]
				\centering
				\begin{tikzpicture}[>=Stealth,scale=0.3]
					\draw[thick,->] (-6,0)--(4,0);
					\draw[thick,->] (0,-4)--(0,4);
					\foreach \Point in {(0,0), (-2.8,0), (-2.3,0),(-1.7,0),(-1,0)}{\fill[blue] \Point circle(2mm);}
					\draw[line width=3pt,blue,line cap=round] (-3.2,0)--(-6,0);
				\end{tikzpicture}
			\end{figure} 
		\end{minipage}%
	}%
	\hspace{3mm}
	\subfigure[Maxwell molecules]{
		\begin{minipage}[t]{0.25\linewidth}
			\centering
			\begin{figure}[H]
				\centering
				\begin{tikzpicture}[>=Stealth,scale=0.3]
					\draw[thick,->] (-6,0)--(4,0);
					\draw[thick,->] (0,-4)--(0,4);
					\foreach \Point in {(-5,0), (-4.4,0), (-3.6,0),(-2.6,0),(-1.4,0)}{\fill[blue] \Point circle(2mm);}
				\end{tikzpicture}
			\end{figure}  
		\end{minipage}%
	}%
	\hspace{3mm}
	\subfigure[Soft potentials]{
		\begin{minipage}[t]{0.25\linewidth}
			\centering
			\begin{figure}[H]
				\centering
				\begin{tikzpicture}[>=Stealth,scale=0.3]
					\draw[thick,->] (-6,0)--(4,0);
					\draw[thick,->] (0,-4)--(0,4);
					\foreach \Point in {(-5,0), (-4.2,0), (-3.5,0),(-2.9,0),(-2.4,0)}{\fill[blue] \Point circle(2mm);}
					\draw[line width=3pt,blue,line cap=round] (-2.1,0)--(0,0);
				\end{tikzpicture}
			\end{figure}    
		\end{minipage}%
	}%
	\centering
	\caption{Spectrum for $L$}
	\label{fig1}
\end{figure}

As a sufficient criterion, the existence of the spectral gap for $L$ by solving \eqref{def.ep} can be also verified from the coercivity estimate of $-L$ via the compactness argument or the constructive  approach. Indeed, it generally holds that there is a generic constant $\lambda_L>0$, depending only on the collision kernel of $Q$, such that 
\begin{equation}
	\label{coerv}
	(f,-Lf)_{L^2_v}\geq \lambda_L \int_{\R^3}\nu(v)|f|^2dv
\end{equation} 
for any $f=f(v)\in (\ker L)^\perp$ in $L^2(\R^3_v)$. This is the so-called {\it coercivity} estimate for $-L$ which has been extensively studied in \cite{Alexandre2000,Global2019,Alexandre2012,Baranger2005,Bobylev1988,Caflisch1980,Degond1997,Golse1986,Grad1963,Gressman2011,Guo2002a,Guo2003,Mouhot2006a,Mouhot2006,Mouhot2007,Pao1974,Chang1952,Chang1970} for different kinds of collision operators. Among them, we mention the fundamental works by Bobylev \cite{Bobylev1988} and Wang Chang and Uhlenbeck \cite{Chang1952,Chang1970} for the Maxwell molecules, the constructive proofs by Mouhot together with his collaborators in series of works \cite{Baranger2005,Mouhot2006a,Mouhot2006,Mouhot2007}, and the recent progress on the non-cutoff Boltzmann equation by Gressman and Strain \cite{Gressman2011} and AMUXY \cite{Alexandre2012}. The collision frequency function $\nu(v)$ in \eqref{coerv}, that turns out to be continuous and strictly positive at all $v\in\R^3$, has a strictly positive lower bound in the hard potential case, which then implies the exponential time decay of solutions to \eqref{shlp} in large time. However, in the soft potential case, $\nu(v)$ decays to zero as $|v|\to \infty$  and thus \eqref{coerv}  implies only the {\it degenerated spectral gap}, so that the solution to \eqref{shlp} may decay to zero only in a sub-exponential rate using the remarkable strategy by Caflisch \cite{Caflisch1980}. Here we also mention two very interesting recent works by Bobylev, Gamba and Potapenko \cite{Bobylev2015} and Bobylev, Gamba and Zhang \cite{Bobylev2017} about the sub-exponential rate relaxation for some collisional kinetic models.

In the spatially inhomogeneous setting, particularly when the space domain is a torus $\Omega=\T^3$, the spectral gap for the linearized equation
\begin{equation}
	\label{lpsi}
	\partial_t f+v\cdot \nabla_x f=Lf,\quad t>0,\ x\in \Omega,\ v\in \R^3,
\end{equation}
also has been extensively studied in the context of {\it hypocoercivity} that Villani \cite{Villani2009} first  introduced for investigating the interplay between the collision operator and the transport operator. For instance, in case of the Boltzmann collision with cutoff potentials, if one ignores the nonlocal integral part $K$ in  the Grad's splitting $L=-\nu(v)+K$ and further takes the Fourier transform of \eqref{lpsi} with respect to $x\in \T^3$, then   
the spectrum for the linear operator $-ik\cdot v-\nu(v)$, where $k\in\Z^3$ is the Fourier variable, can be described as in Figure \ref{fig2} below, see also Ellis and Pinsky \cite{Ellis1975}. Therefore, as in the spatially homogeneous case, it is hard to expect the spectral gap as well as the exponential decay for the linear operator $-v\cdot \nabla_x +L$ in case of soft potentials when the operator $K$ is included. In fact, for solutions to \eqref{lpsi} with $\Omega=\T^3$ in soft potential case, one can obtain only the sub-exponential or almost exponential decay for solutions when the initial data are weighted by an extra velocity function which is either exponential or polynomial, respectively; see many previous fundamental works, for instance, initiated by Caflish \cite{Caflisch1980a} and later extensively developed by Strain and Guo \cite{Strain2006a,Strain2007} as well as  Carrapatoso and Mischler \cite{Carrapatoso2017}.
\begin{figure}[htbp]
	\centering
	\subfigure[Hard potentials]{
		\begin{minipage}[t]{0.25\linewidth}
			\centering
			\begin{figure}[H]
				\centering
				\begin{tikzpicture}[>=Stealth,scale=0.3]
					\draw[thick,->] (-6,0)--(4,0);
					\draw[thick,->] (0,-4)--(0,4);
					\draw[dashed,thick,blue] (-2.5,-3.8)--(-2.5,3.8);
					\filldraw[blue] (-2.5,0) circle(2mm);
					\draw (-1.25,0) node[black,below]{\tiny $-\nu(0)$}; 
					\draw[rotate around={90:(-2.5,0)},color=blue,domain=-6:1,pattern=north east lines,pattern color=blue] plot (\x,1.25+\x+0.2*\x*\x);	
					\draw[rotate around={90:(-2.5,0)},<->,color=blue,domain=-6:1] plot (\x,1.25+\x+0.2*\x*\x);
				\end{tikzpicture}
			\end{figure} 
		\end{minipage}%
	}%
	\hspace{3mm}
	\subfigure[Maxwell molecules]{
		\begin{minipage}[t]{0.25\linewidth}
			\centering
			\begin{figure}[H]
				\centering
				\begin{tikzpicture}[>=Stealth,scale=0.3]
					\draw[thick,->] (-6,0)--(4,0);
					\draw[thick,->] (0,-4)--(0,4);
					\draw[thick,blue,<->] (-2.5,-3.8)--(-2.5,3.8);
					\filldraw[blue] (-2.5,0) node[black,below left]{\tiny $-\nu(0)$} circle(2mm);
				\end{tikzpicture}
			\end{figure}  
		\end{minipage}%
	}%
	\hspace{3mm}
	\subfigure[Soft potentials]{
		\begin{minipage}[t]{0.25\linewidth}
			\centering
			\begin{figure}[H]
				\centering
				\begin{tikzpicture}[smooth,>=Stealth,scale=0.3]
					\draw[thick,->] (-6,0)--(4,0);
					\draw[thick,->] (0,-4)--(0,4);
					\filldraw[blue] (-2.5,0) node[black,below left]{\tiny $-\nu(0)$} circle(2mm);
					\fill[rotate around={90:(0,0)},pattern color=blue,pattern=north east lines] (-3.6,0)--(-3.6,0.187) plot[domain=-3.6:3.6](\x,2.5*exp(-0.2*\x*\x)--(3.6,0.187)--(3.6,0)--(-3.6,0);
					\draw[rotate around={90:(0,0)},domain=-3.6:3.6,thick,blue,<->]plot (\x,2.5*exp(-0.2*\x*\x);
				\end{tikzpicture}
			\end{figure}     
		\end{minipage}%
	}%
	\centering
	\caption{Spectrum for $-ik\cdot v-\nu(v)$ with the parameter $k\in \Z^3$}
	\label{fig2}
\end{figure}

In contrast with either the spatially homogeneous case or the torus domain case mentioned above, we aim to recover in the paper the spectral gap as well as the exponential decay in case of soft potentials for general bounded domains with physical boundaries. 
For this, we consider $\Omega$ as a bounded open subset in $\R^3$ with the unit outward normal vector $n(x)$ which is assumed to exist almost everywhere on $\pa\Omega$. Then one can decompose the phase boundary $\pa\Omega\times \R^3$ as 
\begin{align*}
	\gamma_- &= \{(x,v)\in\partial\Omega\times\R^3 : n(x)\cdot v<0\},\quad\text{(the incoming set),}\\
	\gamma_+ &= \{(x,v)\in\partial\Omega\times\R^3 : n(x)\cdot v>0\},\quad\text{(the outgoing set),}\\
	\gamma_0 &= \{(x,v)\in\partial\Omega\times\R^3 : n(x)\cdot v=0\},\quad\text{(the grazing set).}
\end{align*}
With this notation, we consider the initial boundary value problem on the linearized equation \eqref{lpsi} supplemented with the {\it  inflow} boundary condition 
\begin{align*}
	f(t,x,v) = g(t,x,v),\quad \text{ on } [0,\infty)\times\gamma_-,
\end{align*}
for an appropriate function $g(t,x,v)$ decaying to zero exponentially. We will see that for the specified problem, the fact that the degenerate spectral gap can be refined to be non-degenerate and correspondingly the sub-exponential decay can be refined to be exponential is essentially a consequence of accounting for an extra effect by the transport operator in the bounded domain $\Omega$, namely, the action of the transport operator on a designed space-velocity dependent weight function induces an extra non-degenerate relaxation dissipation in large velocity.  This provides a new insight of hypocoercivity for kinetic equations with soft potentials under consideration.

In the meantime, as a byproduct of our refined weighted energy approach, in particular with the macro-micro decomposition introduced by Guo \cite{Guo2004,Guo2002a,Guo2003} as well as independently by Liu and Yu \cite{Liu2004a} and Liu, Yang and Yu \cite{Liu2004}, we will also establish the global well-posedness for the nonlinear Cauchy problem 
\begin{align}\label{e1}		\partial_tF+v\cdot\nabla_xF=Q(F,F), \quad F(0,x,v)=F_0(x,v),
\end{align} 
with inflow boundary condition
in the framework of perturbations in some new function spaces for the cutoff Boltzmann case.
Here, the non-negative $F(t,x,v)$ stands for the density distribution function of gas particles with velocity $v\in\R^3$ at time $t\ge 0$ and position $x\in\Omega$. For the perturbation formulation, setting $F(t,x,v) = \mu(v)+\mu^{1/2}(v)f(t,x,v)$, the problem \eqref{e1} can be rewritten as 
\begin{equation}\label{ncp}
	\partial_t f+v\cdot\nabla_xf = Lf +\Gamma(f,f),\quad f(0,x,v)=f_0(x,v),
\end{equation}
with inflow boundary condition
\begin{align*}
	f(t,x,v) = g(t,x,v),\quad \text{ on } \gamma_-,
\end{align*}
where $L$ is the linearized collision operator defined in \eqref{lco}  
and $\Gamma$ is the nonlinear collision operator given by 
\begin{equation*}
	\Gamma(f,g)=\mu^{-1/2}Q(\mu^{1/2}f,\mu^{1/2}g).
\end{equation*} 
For a complete literature review to the well-posedness and large time behavior of the Boltzmann or Landau equation, readers may refer to a recent work \cite{Duan2020} by the second author with Liu, Sakamoto and Strain and references therein.  Here we only mention that the pioneering work on the first global existence of solutions to the cutoff Boltzmann near global Maxwellians in the whole space was done by Ukai \cite{Ukai1974} for hard potentials and later by Ukai and Asano \cite{Ukai1982} for a certain range of soft potentials.  

In what follows we prescribe the bilinear collision operator $Q$ with only soft potentials under consideration. For the case of cutoff Boltzmann equation, the collision operator $Q$ is defined by 
\begin{align}\label{cutoffQ}
	Q(G,F) = \int_{\R^3}\int_{\mathbb{S}^{2}} B(v-v_*,\omega)\big[G(v'_*)F(v')-G(v_*)F(v)\big]\,d\omega dv_*.
\end{align} 
In this expression $v,v_*$ and $v',v'_*$ are velocities of a pair of particles before and after collision, given in terms of the $\omega$-representation by 
\begin{align*}
	v'=v+[(v_*-v)\cdot\omega]\omega,\quad v_*'=v_*-[(v_*-v)\cdot\omega]\omega,
\end{align*} 
satisfying the physical conservation laws of elastic collisions: $v+v_*=v'+v'_*$ and $|v|^2+|v_*|^2=|v'|^2+|v_*'|^2$. The Boltzmann collision kernel $B(v-v_*,\omega)$ for a monatomic gas is a non-negative function which only depends on relative velocity $|v-v_*|$ and the angle $\theta$ through $\cos\theta=\frac{v-v_*}{|v-v_*|}\cdot \omega$. We assume 
\begin{equation*}
	B(v-v_*,\omega)=|v-v_*|^\gamma b(\cos\theta)
\end{equation*}
with $-3<\gamma<0$ for soft potentials and the angular part $b(\cos\theta)$ satisfying the Grad angular cutoff assumption
$0<b(\cos\theta)\le C|\cos\theta|$.

For the case of non-cutoff Boltzmann equation, the collision operator $Q$ is defined by 
\begin{align*}
	Q(G,F) = \int_{\R^3}\int_{\mathbb{S}^{2}} B(v-v_*,\sigma)\big[G(v'_*)F(v')-G(v_*)F(v)\big]\,d\sigma dv_*.
\end{align*}  
In this expression $v,v_*$ and $v',v'_*$ are velocity pairs given in terms of the $\sigma$-representation by 
\begin{align*}
	v'=\frac{v+v_*}{2}+\frac{|v-v_*|}{2}\sigma,\quad v'_*=\frac{v+v_*}{2}-\frac{|v-v_*|}{2}\sigma,\quad \sigma\in\mathbb{S}^2,
\end{align*}
that again satisfy that 
$v+v_*=v'+v'_*$ and 
$|v|^2+|v_*|^2=|v'|^2+|v'_*|^2$.
The Boltzmann collision kernel $B(v-v_*,\sigma)$ depends only on $|v-v_*|$ and the deviation angle $\theta$ through $\cos\theta=\frac{v-v_*}{|v-v_*|}\cdot\sigma$. Without loss of generality we can assume $B(v-v_*,\sigma)$ is supported on $0\le\theta\le\pi/2$, since one can reduce the situation with {\it symmetrization}: $\overline{B}(v-v_*,\sigma)={B}(v-v_*,\sigma)+{B}(v-v_*,-\sigma)$. Moreover, we assume 
$B(v-v_*,\sigma) = |v-v_*|^\gamma b(\cos\theta)$,
where for the angular part, there exist $C_b>0$ and $0<s<1$ such that  
\begin{align*}
	\frac{1}{C_b\theta^{1+2s}}\le \sin\theta b(\cos\theta)\le \frac{C_b}{\theta^{1+2s}}, \quad\forall\,\theta\in (0,\frac{\pi}{2}].
\end{align*} 
We also assume $-3<\gamma<-2s$ for soft potentials.

For the case of Landau equation, the collision operator $Q$ is given by 
\begin{align*}
	Q(G,F)&=\nabla_v\cdot\int_{\R^3}\phi(v-v')\big[G(v')\nabla_vF(v)-F(v)\nabla_vG(v')\big]\,dv'\\
	&=\sum^3_{i,j=1}\partial_{v_i}\int_{\R^3}\phi^{ij}(v-v')\big[G(v')\partial_{v_j}F(v)-F(v)\partial_{v_j}G(v')\big]\,dv'.
\end{align*}
The non-negative definite matrix-valued function $\phi=[\phi^{ij}(v)]_{1\leq i,j\leq 3}$ takes the form of 
\begin{align*}
	\phi^{ij}(v) = \Big\{\delta_{ij}-\frac{v_iv_j}{|v|^2}\Big\}|v|^{\gamma+2},
\end{align*}
with $-3\leq \gamma<-2$ for soft potentials including the physically realistic Coulomb interactions.  

As a summary for {\it soft potentials} through the paper, we mean $-3<\gamma<0$ for cutoff Boltzmann collision operator, $-3<\gamma<-2s$ for non-cutoff Boltzmann collision operator and $-3\le \gamma<-2$ for Landau collision operator, and we also define $\kappa=\gamma$, $\gamma+2s$ and $\gamma+2$ in the respective case for later use. Note that $\kappa<0$ holds true. We may choose $\nu(v)\approx\langle v\rangle^\kappa$ for the collision frequency function $\nu(v)$ in the coercivity inequality \eqref{coerv} for the linearized collision operator $L$. 
%

\smallskip 

\subsection{Notations}


%
%
%
%

%

Before presenting the main results, we also specify some notations to be used through the paper.
Let $\<v\>=\sqrt{1+|v|^2}$ and 
$\partial^\alpha_\beta = \partial^{\alpha_1}_{x_1}\partial^{\alpha_2}_{x_2}\partial^{\alpha_3}_{x_3}\partial^{\beta_1}_{v_1}\partial^{\beta_2}_{v_2}\partial^{\beta_3}_{v_3}$,
where $\alpha=(\alpha_1,\alpha_2,\alpha_3)$ and $\beta=(\beta_1,\beta_2,\beta_3)$ are multi-indices. For $\beta$ and $\beta'$, if each component of $\beta'$ is not greater than that of $\beta$, we denote $\beta'\le\beta$. We will write $C>0$ to be a generic constant, usually large,  which may change from line to line. Denote the $L^2_v$ and $L^2_{x,v}$, respectively, as 
\begin{align*}
	|f|^2_{L^2_v} = \int_{\R^3}|f|^2\,dv,\quad \|f\|_{L^2_{x,v}}^2 = \int_{\Omega}|f|^2_{L^2_v}\,dx.
\end{align*}
For Landau equation, we denote 
\begin{align*}
	\sigma^{ij}(v)=\int_{\R^3}\phi^{ij}(v-v')\mu(v')\,dv',\quad \sigma^i(v)=\sum_{j=1}^3\int_{\R^3}\phi^{ij}(v-v')\frac{v'_j}{2}\mu(v')\,dv',
\end{align*}
and the dissipation norm as 
\begin{align*}
	|f|^2_{L^2_D}= \sum_{i,j=1}^3\int_{\R^3}\Big(\sigma^{ij}\partial_{v_i}f\partial_{v_j}f+\sigma^{ij}\frac{v_i}{2}\frac{v_j}{2}|f|^2\Big)\,dv,\quad
	\|f\|^2_{L^2_xL^2_D} = \int_{\Omega}|f|^2_{L^2_D}\,dx.
\end{align*}
For non-cutoff Boltzmann equation, as in \cite{Gressman2011}, we denote
\begin{align*}
	|f|^2_{L^2_D}&:=|\<v\>^{\frac{\gamma+2s}{2}}f|^2_{L^2_v}+ \int_{\R^3}dv\,\<v\>^{\gamma+2s+1}\int_{\R^3}dv'\,\frac{(f'-f)^2}{d(v,v')^{3+2s}}\1_{d(v,v')\le 1}. 
\end{align*}
We also denote $|f|^2_{L^2_{D,N}}=\sum_{|\beta|\le N}|\partial_\beta f|^2_{L^2_D}$ and $\|f\|^2_{L^2_xL^2_D} = \int_{\Omega}|f|_{L^2_D}^2\,dx$. The fractional differentiation effects are measured using the anisotropic metric on the {\it lifted} paraboloid
$d(v,v'):=\{|v-v'|^2+\frac{1}{4}(|v|^2-|v'|^2)^2\}^{1/2}$.
For cutoff Boltzmann equation, we denote 
$\nu$ as in \eqref{333a} by
\begin{equation*}
	\nu(v)=\int_{\R^3}\int_{\S^2}B(v-v_*,\omega)\mu(v_*)\,d\omega dv_*,
\end{equation*}
and hence,
$\nu(v)\approx \<v\>^\gamma$.
Correspondingly, we denote the dissipation norm as 
\begin{align}\label{defnu}
	|f|_{L^2_D} = |f|_{L^2_\nu} := \int_{\R^3}\nu(v)|f(v)|^2\,dv,\quad
	\|f\|^2_{L^2_xL^2_D} = \int_{\Omega}|f|^2_{L^2_D}\,dx.
\end{align}

The most crucial idea for recovering the spectral gap in case of soft potentials for the spatially inhomogeneous linear problem \eqref{lpsi} is to introduce a weight function in the phase variable $(x,v)\in \Omega\times \R^3$ that involves the inner product of $x$ and $v$. In fact, let $q>0$ be any positive constant, then we define the weight function 
\begin{align}\label{W}
	W = W(x,v) = \exp\Big(-q\frac{x\cdot v}{\<v\>}\Big).
\end{align}
It is straightforward to compute 
\begin{align}\label{W2}
	\partial_{x_i}W = -q\frac{v_i}{\<v\>}W,\quad \pa_{v_i}W = -q\big(\frac{x_i}{\<v\>}-\frac{v_ix\cdot v}{\<v\>^3}\big)W, 
\end{align}
and hence 
\begin{equation}\label{W1}
	-v\cdot\nabla_xW =q\frac{|v|^2}{\<v\>}W.
\end{equation}
Thus, by \eqref{W} and \eqref{W2} above, it holds that
\begin{align}\label{Wul}
	e^{-qC}\leq W\leq e^{qC},\quad
	|\partial_{x_i}W|\leq C qW,\quad |\partial_{v_i}W|\leq \frac{Cq}{\langle v\rangle}W,
\end{align}
for a generic constant $C\geq 1$ depending only on the size of $\Omega$ but not on $q$.
We remark that 
one may choose the weight function of the more general form 
\begin{equation}
	\label{17bb}
	W=\exp\left(-q\langle v\rangle^\vartheta (1-\epsilon \frac{x\cdot v}{\langle v\rangle})\right)
\end{equation} 
to generate higher velocity weight 
with parameters $q>0$, $0\leq\vartheta\leq 2$ and $\epsilon>0$. However, we would not pursue such more general formulation in order to keep the current discussions short and concise due to the fact that $W\approx 1$ for \eqref{W}.  



\subsection{Spectral gap}

%

In this subsection, we will present the spectral gap for Landau and non-cutoff Boltzmann equations with {\it zero inflow} boundary condition. For this, 
we consider $\Omega$ as a Lipschitz domain. That is, $\Omega\subset\R^3$ is an open subset and 
for each point $x_0\in\pa\Omega$, there exists $r>0$ and a Lipschitz function $\varphi:\R^2\to \R$, upon relabeling and reorienting the coordinates axes if necessary, such that 
\begin{align*}
	\Omega\cap B(x_0,r) = \big\{x\in B(x_0,r):x_3>\varphi(x_1,x_2)\big\}, 
\end{align*}
where $B(x_0,r)$ is the open ball with center $x_0$ and radius $r$. 
Then the unit outward normal  vector $n$ exists on $\pa\Omega$ almost everywhere. 
Therefore, we can consider linearized Boltzmann collision operator $L$ with 
 the {\it zero inflow} boundary condition for $f$:
\begin{align}\label{inflow}
	f(t,x,v) = 0,\quad \text{ on } \gamma_-.
\end{align}
It means the complete condensation state on the boundary:
\begin{align*}
	F(t,x,v) = \mu,\quad \text{ on }\gamma_-.
\end{align*}

To study the spectrum, we will split $L$ as $L=-A+K$ in Section \ref{sec2}, where $A$ is the dissipation part and $K$ is the relatively compact part. We define the linear operators 
\begin{align*}
	\Lambda = v\cdot \nabla_x - A, \\
	\mathcal{L} = v\cdot \nabla_x - L,
\end{align*}
and the inner product space 
\begin{align}\label{X1}
	X :=L^2(\Omega_x\times \R^3_v)
\end{align}
equipped with the inner product on $\Omega\times\R^3_v$:
\begin{align}\label{X}
	(f,h)_X = C_0(f,h)_{L^2_{x,v}} + (Wf,Wh)_{L^2_{x,v}},
\end{align}
for some large $C_0>0$ to be chosen large later in Lemma \ref{lem.coxv}. Let $\|\cdot\|_X$ be the norm induced by \eqref{X}. 
Thanks to \eqref{Wul}, we know that $\|\cdot\|_X$ is equivalent to  $\|\cdot\|_{L^2_{x,v}}$ and hence $(X,\|\cdot\|_X)$ is a Banach space. Note that $X$ has the same Banach topology as $L^2_{x,v}$. 
Here we denote $\mathcal{L}$ in the sense of distribution and write the definite domain of $\mathcal{L}$ to be 
\begin{align}\label{def.ddl}
	D(\mathcal{L}) = \{f\in X: f(t,x,v) = 0\ \text{ on } \gamma_-,\ \mathcal{L}f\in X\}. 
\end{align}
Note that $\mathcal{L}$ is not self-adjoint on $X$. Since $ D(\L)$ contains $C^\infty_c(\Omega\times\R^3)$, we know that $ D(\L)$ is dense in $X$ and $\L$ is densely defined. 
Recall that the kernel of $L$ is the span of $\{\mu^{1/2},v_i\mu^{1/2}$ $(1\le i\le 3),|v|^2\mu^{1/2}\}$. Then we denote $\P$ to be the projection onto $\ker L$:
\begin{align}\label{Pf}
	\P f = (a+b\cdot v+c|v|^2)\mu^{1/2}(v).
\end{align}

Before stating the result on spectrum we recall some notations from functional analysis. One may refer to \cite[Chap XIV]{Gohberg1990} for more details. 
Let $T$ be a linear operator on $X$ with  domain $D(T)\subset X$ and range $\Im\, T\subset X$. The resolvent set $\rho(T)$ is the set of all complex numbers $\lambda\in\C$ for which $\lambda-T$ is invertible and the inverse $(\lambda-T)^{-1}$ is bounded on $X$. The spectrum $\sigma(T)$ of $T$ is defined to be the complement in $\C$ of $\rho(T)$.  A point $\lambda_0\in\sigma(T)$ is called an eigenvalue of $T$ of finite type if $\lambda_0$ is an isolated point of $\sigma(T)$ and the associated projection 
$
P_{\lambda_0}=\frac{1}{2\pi i}\int_{\mathcal{C}}(\lambda -T)^{-1}\,d\lambda
$ 
has finite rank, where $\mathcal{C}$ is a contour around $\lambda_0$ separating $\lambda$ from the other eigenvalues. Since $\{\lambda_0\}=\sigma(T|_{\Im}P_{\lambda_0})$ and $\Im\, P_{\lambda_0}$ is finite dimension in $D(T)$, it follows that $\lambda_0$ is an eigenvalue of $T$. The linear operator $T$ is called a Fredholm operator if $T$ is closed and the integers
\begin{align*}
	\dim \ker T,\quad \dim(X/\Im\,{T})
\end{align*}
are finite. 
The {essential} spectrum of a linear operator $T$ on $X$ is defined to be the set of all $\lambda\in\C$ such that $\lambda-T$ is not a Fredholm operator. This set is denoted by $\sigma_{ess}(T)$.

The main result on the spectral gap for the linear operator $\mathcal{L}$ is stated as follows.

\begin{Thm}\label{Main1}
	Consider the Landau equation and non-cutoff Boltzmann equation. 
	The essential spectrum of $\mathcal{L} = -\Lambda+K$, as an operator on $(X,\|\cdot\|_X)$ with the definite domain $D(\mathcal{L})$ in \eqref{def.ddl}, lies in $\{z\in\C:\Re\, z\le -c_0\}$ for a constant $c_0>0$, and \begin{align}\label{gap}
		\sigma(\mathcal{L})\cap \{z\in\C:\Re\, z>-c_0\}\subset \{z\in\R:-c_0< z\le 0\}
	\end{align} contains only the discrete eigenvalues of finite type with possible accumulation points only on $\{\Re\, z=-c_0\}$. 
	Moreover, the kernel of $\mathcal{L}$ in $(X,\|\cdot\|_X)$ is $\{0\}$. 
\end{Thm}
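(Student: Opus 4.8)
The plan is to combine three ingredients: (i) a coercivity estimate for $\Lambda = v\cdot\nabla_x - A$ on the weighted space $(X,\|\cdot\|_X)$ showing that $\Re\,(\Lambda f,f)_X \ge c_1\|\nu^{1/2}f\|_{L^2_{x,v}}^2 + c_2\|\langle v\rangle^{1/2}f\|_{L^2_{x,v}}^2$ for some positive constants, where the second term is the \emph{new} non-degenerate dissipation coming from the transport action on $W$ via \eqref{W1}; (ii) a perturbation/compactness argument identifying $K$ as relatively $\Lambda$-compact, so that $-\Lambda+K$ and $-\Lambda$ have the same essential spectrum; (iii) an abstract spectral-theory step (in the style of Ellis--Pinsky, or Kato's perturbation theory, see \cite{Gohberg1990}) to pass from $\sigma_{ess}$ and the resolvent structure to \eqref{gap} plus the triviality of $\ker\mathcal{L}$.

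First I would prove the weighted coercivity of $\Lambda$. Writing $(\Lambda f,f)_X = C_0(\Lambda f,f)_{L^2_{x,v}} + (W\Lambda f, Wf)_{L^2_{x,v}}$ and using $W\Lambda f = \Lambda(Wf) - (v\cdot\nabla_x W)f = \Lambda(Wf) + q\tfrac{|v|^2}{\langle v\rangle}Wf$ by \eqref{W1} (modulo lower-order $A$-commutator terms controlled by \eqref{Wul}), the transport part $v\cdot\nabla_x(Wf)$ integrated against $Wf$ produces only a boundary term $\tfrac12\int_{\gamma}(n\cdot v)|Wf|^2$, which is nonnegative on $\gamma_+$ and vanishes on $\gamma_-$ by the zero inflow condition in $D(\mathcal L)$; the $A$-part gives $\lambda_L$-coercivity in the $\nu$-norm (this is the split $L=-A+K$ from Section \ref{sec2} together with the coercivity \eqref{coerv}); and the extra term $q\int\tfrac{|v|^2}{\langle v\rangle}W^2|f|^2$ gives, after using $W\approx 1$ and $\tfrac{|v|^2}{\langle v\rangle}\gtrsim \langle v\rangle - 1$, precisely the missing $\langle v\rangle$-weight that compensates the degeneracy of $\nu(v)\approx\langle v\rangle^\kappa$ at large $v$. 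Choosing $C_0$ large (as in Lemma \ref{lem.coxv}) absorbs the commutator errors. The upshot is $\Re\,(\Lambda f,f)_X \ge c_0\|f\|_X^2$ for all $f\in D(\mathcal L)$ with $\P f$ suitably handled, so $-\Lambda$ is dissipative and its spectrum lies in $\{\Re\,z\le -c_0\}$; combined with maximality of the domain this identifies $\sigma(-\Lambda)\subset\{\Re\,z\le -c_0\}$ and in fact $\sigma_{ess}(-\Lambda)\subset\{\Re\,z\le -c_0\}$.

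Next, I would show $K(\lambda+\Lambda)^{-1}$ is compact on $X$ for $\lambda$ in the resolvent set of $-\Lambda$; this is the standard averaging-lemma / Grad-type argument that $K$ maps into a space with a velocity gain and the transport resolvent provides an $x$-regularizing average, so by Weyl's theorem $\sigma_{ess}(\mathcal L)=\sigma_{ess}(-\Lambda)\subset\{\Re\,z\le -c_0\}$, possibly after shrinking $c_0$. In the region $\{\Re\,z>-c_0\}$ the operator $\mathcal L$ then has only isolated eigenvalues of finite type, with accumulation confined to $\{\Re\,z=-c_0\}$. To locate these eigenvalues on the real axis in $(-c_0,0]$, I would test the eigenvalue equation $\mathcal L f=\lambda f$ with $f$ in $X$, use the macro-micro decomposition $f=\P f+(\I-\P)f$: the micro part is controlled by $-\Re\,(Lf,f)_{L^2_v}\gtrsim\|\nu^{1/2}(\I-\P)f\|^2$, while for the macro part $v\cdot\nabla_x$ has no real spectral effect and the zero inflow boundary forces, via a Poincaré-type / boundary-trace argument on $\Omega$, that any eigenfunction with $\Re\,\lambda$ close to $0$ must have vanishing hydrodynamic field — hence $f\equiv 0$, giving $\ker\mathcal L=\{0\}$ and pushing all surviving eigenvalues strictly below the imaginary axis; self-adjointness of $L$ together with the skew-adjointness of $v\cdot\nabla_x$ on the (unweighted) zero-inflow domain forces the real part structure, i.e. eigenvalues in this window are real.

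The main obstacle is the first step: making the weighted coercivity estimate rigorous requires carefully controlling the commutators $[A,W]$ and, for the non-cutoff and Landau cases, the fact that $A$ is a nonlocal or second-order operator whose interaction with the non-constant weight $W(x,v)$ is delicate in the large-velocity regime — one must verify that these error terms are genuinely lower order relative to the gain $q\tfrac{|v|^2}{\langle v\rangle}W^2$ and the coercive $\nu$-norm, uniformly in $x\in\Omega$, and that the boundary terms on $\gamma_\pm$ have the right sign; the inner-product structure of $W$ in $x\cdot v$ is exactly what makes $-v\cdot\nabla_x W$ positive and coercive in $|v|$, so the estimate is plausible, but the bookkeeping (and the choice of $C_0$ in \eqref{X}) is where the real work lies. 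A secondary subtlety is the $\Lambda$-compactness of $K$ in the soft-potential range together with a weight, which needs the velocity decay of the kernel of $K$ to beat the growth of $W$; since $W\approx 1$ this is manageable but must be stated carefully.
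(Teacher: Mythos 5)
Your proposal follows essentially the same route as the paper: split $L=-A+K$, prove weighted coercivity of $\Lambda=v\cdot\nabla_x+A$ on $X$ using the extra dissipation $q|v|^2\langle v\rangle^{-1}$ produced by the weight $W$ (the paper's Lemma \ref{lem.coxv}), show $K$ is relatively $\Lambda$-compact via a velocity gain plus an averaging-type spatial gain, invoke stability of the essential spectrum under relatively compact perturbations, and conclude with eigenvalue localization and triviality of the kernel. Two places where your outline understates or mis-states what is actually needed. First, dissipativity of $c_0-\Lambda$ alone does not place $\sigma(-\Lambda)$ in $\{\Re\,z\le -c_0\}$; one must prove the range condition, i.e.\ that $\lambda+\Lambda$ maps $D(\Lambda)$ onto $X$ for $\Re\,\lambda>-c_0$, and in a bounded Lipschitz domain with inflow boundary condition this is the nontrivial content of the paper's Theorem \ref{thm.spex}, obtained by adding an elliptic regularizer $\epsilon\mathcal{L}_P$, applying Lax--Milgram in a weighted $H^1_{x,v}$ space that encodes the boundary condition, and passing to the limit $\epsilon\to0$; your phrase ``maximality of the domain'' names the conclusion rather than supplies this argument. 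Relatedly, the ``standard averaging-lemma'' compactness step is not directly available on $\Omega$: the paper first builds an extension operator (Lemma \ref{Lem23}) for functions with $\Lambda g\in L^2$ and $g|_{\gamma_-}=0$, with control of $\Lambda Eg$ on all of $\R^3_x\times\R^3_v$, and only then runs the velocity mollification and the $1/(\lambda+iv\cdot k)$ estimates; without some such device the Fourier argument has no meaning on the bounded domain. Second, $v\cdot\nabla_x$ with zero inflow is not skew-adjoint: $\Re\,(v\cdot\nabla_xf,f)_{L^2_{x,v}}=\tfrac12\int_{\gamma_+}(v\cdot n)|f|^2\,dv\,dS(x)\ge0$, so the transport part is accretive rather than skew; this still yields $\Re\,(\mathcal{L}f,f)_{L^2_{x,v}}\le0$ and hence $\Re\,z\le0$ for the discrete eigenvalues, which is what the paper uses, but the realness of the eigenvalues does not follow from the skew-adjointness you invoke. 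Your kernel argument is in the right spirit, though the paper's version is more concrete: $f\in\ker\mathcal{L}$ forces $(-Lf,f)=0$ and zero outgoing trace, hence $f=\P f$, then $v\cdot\nabla_x\P f=0$ forces $(a,b,c)$ constant in $x$, and the inflow condition kills these constants. Finally, a small slip: the coercivity you need is that of $A$ (estimate \eqref{16}), which holds on all of $L^2_v$ without projecting off $\ker L$, not the projected coercivity \eqref{coerv} of $L$, so no ``handling of $\P f$'' is required at that stage.
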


We can regard $\Lambda$ as the dissipative part and $K$ as the relatively compact part. Theorem \ref{Main1} will be an application of stability for relatively compact perturbation for operators. Thus, we obtain the spectral gap between $\{\Re\, z=-c_0\}$ and $\{\Re\, z =0\}$ for soft potentials with only real isolated eigenvalues in it. It also shows that there's no other eigenvalue except $0$ in a neighborhood of the origin.

\subsection{Exponential decay for linear equation}

In this subsection, we consider the exponential time decay for the linearized Landau and Boltzmann equations with exponentially decay inflow boundary data. 
As explained before, the spectral gap estimate \eqref{gap} indicates the formation of the exponential decay of the linear equation \eqref{lpsi} for soft potentials in the spatially inhomogeneous setting, since $-c_0$ can be regarded as an upper bound of all nonzero eigenvalues. 


We consider the linear Cauchy problem 
\begin{align}\label{linear}
	\pa_tf+v\cdot\na_xf+Lf=0,\quad f(0,x,v)=f_0(x,v), 
\end{align} 
with 
the inflow boundary condition 
\begin{align}\label{2a}
	f(t,x,v) = g(t,x,v),\quad \text{ on } \gamma_-,
\end{align}
where $g(t,x,v)$ is assumed to decay to zero exponentially. 
 The key point is to observe that the linear dynamics weighted by $W$ in \eqref{W} behaves as 
\begin{equation}
	\label{exdiss}
	\partial_t (Wf) +v\cdot \nabla_x (Wf) +q\frac{|v|^2}{\langle v\rangle} Wf=WLf.
\end{equation}
Therefore, an extra relaxation dissipation $q\frac{|v|^2}{\langle v\rangle} Wf\approx q\frac{|v|^2}{\langle v\rangle}f$ that is non-degenerate in large velocity has been created and it can be employed to compensate the large-velocity degenerate dissipation of the form \eqref{coerv} for the operator $L$ in case of soft potentials. As such, the solutions decay in large time to zero in the $L^2_{x,v}$ norm with an exponential rate, which is then in the same situation as in the hard potential case. 
Precisely, via the weighted energy approach, we are able to prove the following

\begin{Thm}\label{MaindecayLandau}
	For any initial data $f_0\in  L^2_{x,v}$ and inflow boundary data $g\in L^2(\gamma_-)$ satisfying 
	\begin{align}\label{g1}
	\|f_0\|^2_{L^2_{x,v}}+\sup_{t\ge 0}\int_{\pa\Omega}\int_{v\cdot n<0}|v\cdot n|e^{\delta_0 t}|g|^2\,dvdS(x)<\infty,
	\end{align}
for some $\delta_0>0$,
	 there exists $0<\lam<\delta_0$ such that  the linearized Landau or Boltzmann equation (including cutoff and non-cutoff cases) \eqref{linear}  with inflow boundary condition \eqref{2a} admits a unique solution $f$ satisfying 
	\begin{align*}
		\sup_{t\ge 0}\{e^{\lam t}\|f(t)\|_{L^2_{x,v}}^2\} \lesssim \|f_0\|^2_{L^2_{x,v}} + \sup_{t\geq 0}\int_{\pa\Omega}\int_{v\cdot n<0}|v\cdot n|e^{\delta_0 t}|g|^2\,dvdS(x).
	\end{align*}
\end{Thm}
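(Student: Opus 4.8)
The plan is to run a weighted $L^2$ energy estimate directly on the conjugated equation \eqref{exdiss} and combine the artificially created relaxation term $q\frac{|v|^2}{\langle v\rangle}Wf$ with the degenerate coercivity \eqref{coerv} of $L$ to close an exponential-decay Gr\"onwall inequality, while tracking the boundary contribution from the inflow data. First I would set up the basic energy identity: multiply \eqref{exdiss} by $Wf$ and integrate over $\Omega\times\R^3$, and separately multiply \eqref{linear} by $C_0 f$ and integrate, then add the two to get the time derivative of $\frac12\|f(t)\|_X^2$ with $\|\cdot\|_X$ as in \eqref{X}. The transport terms $v\cdot\nabla_x$ produce a boundary integral $\int_{\gamma_+}(v\cdot n)|Wf|^2 - \int_{\gamma_-}|v\cdot n||Wg|^2$ (and similarly for the $C_0$ piece); the outgoing part on $\gamma_+$ has a good sign and is discarded, and the incoming part is controlled by the hypothesis \eqref{g1}. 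The collision terms contribute $-C_0(f,-Lf)_{L^2_{x,v}} - (Wf,-L(Wf))_{L^2_{x,v}}$ plus a commutator $(Wf,\,[W,L]f)$ type error; using \eqref{coerv} on each spatial slice (after splitting off $\P f$) gives $-\lambda_L\|(\mathbf I-\P)f\|_{L^2_xL^2_D}^2$ up to lower-order terms, i.e. a dissipation $\gtrsim \int \langle v\rangle^\kappa |(\mathbf I-\P)f|^2$, which is degenerate as $|v|\to\infty$ precisely because $\kappa<0$.

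The heart of the argument is the interplay between this degenerate dissipation and the nondegenerate relaxation term $q\int_{\Omega\times\R^3}\frac{|v|^2}{\langle v\rangle}W^2 f^2\,dv\,dx$ coming from \eqref{W1}. On the region $\langle v\rangle\le R$ the collision dissipation is already uniformly nondegenerate (since $\nu(v)\approx\langle v\rangle^\kappa\gtrsim R^\kappa$), so it controls $f$ there up to the macroscopic part $\P f$; on the complementary region $\langle v\rangle>R$ the relaxation term dominates, because $\frac{|v|^2}{\langle v\rangle}\gtrsim\langle v\rangle\to\infty$ beats any fixed constant. Choosing $R$ large and then $q$ appropriately, the sum of the two dissipations bounds $\lambda\|f\|_{L^2_{x,v}}^2$ from below for some $\lambda>0$, except that we still must absorb the hydrodynamic part $\P f$. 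For this I would invoke the macro-micro decomposition of Guo (and Liu--Yang--Yu) mentioned in the excerpt: the coefficients $a,b,c$ in \eqref{Pf} satisfy an elliptic-type estimate in $x$ driven by $\nabla_x(\mathbf I-\P)f$ and the boundary data, so that $\|\P f\|$ can be controlled by the microscopic dissipation plus boundary terms, at the cost of introducing a suitable interaction functional added to $\|f\|_X^2$ with a small coefficient. Alternatively, since here we only need $L^2$ decay and not regularity, one can carry the $\P f$ estimate through a time-averaged version or through the velocity-moment structure of the transport operator acting on $\ker L$.

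Putting these pieces together yields
$\frac{d}{dt}\mathcal E(t) + \lambda\mathcal E(t) \le Ce^{-\delta_0 t}\sup_{t\ge0}\int_{\partial\Omega}\int_{v\cdot n<0}|v\cdot n|e^{\delta_0 t}|g|^2$,
where $\mathcal E(t)\approx\|f(t)\|_{L^2_{x,v}}^2$, and Gr\"onwall's inequality with $0<\lambda<\delta_0$ gives the claimed bound. Existence and uniqueness of the solution in this class follow by the standard procedure: construct solutions to the damped problem \eqref{exdiss} (which, after conjugation by $W$, has a manifestly dissipative generator $-\Lambda$ plus the relatively compact $K$ — this is exactly the setting behind Theorem \ref{Main1}), for instance via a sequence of approximate problems with artificial viscosity or via semigroup theory on $X$, and pass to the limit using the a priori estimate just derived; the inflow boundary condition is handled by the trace theory for the transport operator along characteristics. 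I expect the main obstacle to be the control of the macroscopic component $\P f$ in a bounded domain with inflow boundary: unlike the torus case one cannot use Fourier analysis or Poincar\'e inequalities freely, so the local conservation laws must be exploited carefully together with the boundary terms, and one has to check that the interaction functional used to recover $\P f$ stays compatible with the exponential weight $W$ and does not destroy the sign of the relaxation term. Once that is in place, the rest is a routine weighted energy computation.
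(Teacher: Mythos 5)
Your proposal is correct and follows essentially the same route as the paper: a weighted energy estimate on the $W$-conjugated equation producing the relaxation term $q|v|^2\langle v\rangle^{-1}$, combined with the unweighted coercivity of $L$ for the microscopic part, a macro--micro interaction functional (test-function/elliptic dual argument for $a,b,c$ with boundary contributions) to recover $\P f$, a linear combination with small coefficients so that the weighted estimate's $C\|f\|_{L^2_xL^2_D}^2$ error is absorbed, and finally Gr\"onwall with $0<\lambda<\delta_0$. The paper implements your velocity-splitting heuristic via the elementary bound $\|f\|_{L^2_xL^2_v}\lesssim \|f\|_{L^2_xL^2_D}+\||v|\langle v\rangle^{-1/2}Wf\|_{L^2_{x,v}}$, but this is the same idea.
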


\subsection{Exponential decay for nonlinear equation}

In this subsection, we will give the exponential time decay of solutions to the initial boundary value problem on the nonlinear Boltzmann equation with cutoff potentials
\begin{align}\label{1}
	\partial_tf  + v\cdot\nabla_xf  =  Lf + \Gamma(f,f),\quad f(0,x,v)=f_0(x,v), 
\end{align}
with a general inflow boundary condition
\begin{align}\label{2}
	f(t,x,v) = g(t,x,v),\quad \text{ on } \gamma_-,
\end{align}
where $g(t,x,v)$ is assumed to decay to zero exponentially. 
The linearized homogeneous form of \eqref{1} is 
\begin{align}
	\label{1a}
	\partial_tf  + v\cdot\nabla_xf  =  Lf. 
\end{align}
Here we let the domain $\Omega\subset\R^3$ be a smooth 
connected open set such that 
\begin{align}\label{Omega2}
	\Omega = \{x\in\R^3:\xi(x)<0\}
\end{align} with $\xi(x)$ being a smooth function. We assume $\na_x\xi(x)\neq 0$ at the boundary $\xi(x)=0$. Then the unit outward normal vector at $\pa\Omega$ is given by 
	$n(x) = 
	\na_x\xi(x)/|\na_x\xi(x)|$,
and it can be extended smoothly near $\pa\Omega=\{x\in\R^3:\xi(x)=0\}$. 
We call $\Omega$ as strictly convex if there exists $c_\xi>0$ such that 
\begin{align}
	\label{convex}
	\sum_{i,j=1}^3\pa_{x_ix_j}\xi(x)\zeta^i\zeta^j\ge c_\xi|\zeta|^2
\end{align}
for all $x\in \overline\Omega$ and all $\zeta\in\R^3$.

Denote a velocity weight function for $\rho>0$ and $\beta\in\R$:
\begin{align}\label{w}
	w(v) = (1+\rho^2|v|^2)^\beta. 
\end{align}
Then we can reformulate  the Boltzmann equation \eqref{ncp} by using the weighted function $h:=wWf$:
\begin{equation}\label{cutoff}
	\left\{\begin{aligned}
		&\{\pa_t+v\cdot\na_x+q{|v|^2}\<v\>^{-1}+\nu\}h=wW\Gamma\Big(\frac{h}{wW},\frac{h}{wW}\Big),\\
		&h(0,x,v)=wWf_0(x,v),\quad h|_{\gamma_-}=wWg. 
	\end{aligned}\right.
\end{equation} 

Given $(t,x,v)$, let $[X(s),V(s)]=[X(s;t,x,v),V(s;t,x,v)]=[x+(s-t)v,v]$ be the trajectory (or the characteristic) for the cutoff Boltzmann equation such that
\begin{align*}
	\frac{dX(s)}{ds}=V(s),\quad\frac{dV(s)}{ds}=0
\end{align*}
with the initial condition that $[X(t;t,x,v),V(t;t,x,v)]=[x,v]$.
\begin{Def}
	[Backward exit time] For $(x,v)$ with $x\in\ol\Omega$ such that there exists some $\tau>0$, $x-sv\in\Omega$ for $0\le s\le \tau$, we define $t_{\b}>0$ to be the last moment at which the back-time straight line $[X(s;0,x,v),V(s;0,x,v)]$ remains in the interior of $\Omega$:
	\begin{align}\label{backward}
		t_\b(x,v)=\sup\{\tau>0\,:\, x-sv\in\Omega \text{ for all }0\le s\le \tau\}.
	\end{align}
\end{Def}
Clearly, for any $x\in\Omega$, $t_\b(x,v)$ is well-defined for all $v\in\R^3$. If $x\in\pa\Omega$, $t_\b(x,v)$ is well-defined for all $v\cdot n(x)>0$. For any $(x,v)$, we use $t_\b(x,v)$ whenever it is well-defined. We have $x-t_\b v\in\pa\Omega$ and $\xi(x-t_\b v)=0$. We also define 
\begin{align*}
	x_{\b}(x,v)=x(t_\b) = x-t_\b v\in\pa\Omega.
\end{align*}
Then we always have $v\cdot n(x_\b)\le 0$. 

Our result on the exponential decay for the cutoff Boltzmann equation is given as follows. 

\begin{Thm}\label{Maincutoff}
	Let $\Omega$ be given in \eqref{Omega2} and fix $q>0$.
	Assume $w^{-2}\<v\>\in L^1(\R^3)$ in \eqref{w}. There exists $\delta>0$ such that if $F_0(x,v)=\mu+\sqrt\mu f_0(x,v)\ge 0$ and 
	\begin{align}
		\label{prioricutoff}
		\|wWf_0\|_{L^\infty_{x,v}}+\sup_{t\ge 0}e^{\lam_0 t}\|wWg(t)\|_{L^\infty(\gamma_-\times\R^3)}\le \delta, 
	\end{align}
with some constant $\lam_0>0$, then there exists a unique solution $F(t,x,v)=\mu+\sqrt\mu f(t,x,v)\ge 0$ to the inflow problem  \eqref{1} and \eqref{2} on the cutoff Boltzmann equation \eqref{cutoffQ}, satisfying the estimate that there exists $0<\lam<\lam_0$ 
such that 
\begin{align*}
	\sup_{t\ge 0}e^{\lam t}\|wWf(t)\|_{L^\infty_{x,v}}\lesssim 
	\|wWf_0\|_{L^\infty_{x,v}}+\sup_{s\ge 0}e^{\lam_0s}\|wWg(s)\|_{L^\infty(\gamma_-)}.
\end{align*}
Moreover, if $\Omega$ is strictly convex in the sense of \eqref{convex}, $f_0(x,v)$ is continuous on $\overline\Omega\times\R^3\setminus\gamma_0$, and $g$ is continuous in $[0,\infty)\times\{\pa\Omega\times\R^3\setminus\gamma_0\}$ with 
	$f_0(x,v)=g(0,x,v)$
	on $\gamma_-$,
then $f(t,x,v)$ is continuous in $[0,\infty)\times\{\overline\Omega\times\R^3\setminus\gamma_0\}$. 

\end{Thm}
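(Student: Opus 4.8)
The plan is to work entirely with the weighted unknown $h=wWf$, which satisfies the damped transport equation \eqref{cutoff}. The crucial gain is that the left-hand side carries the \emph{full} relaxation term $\{q|v|^2\langle v\rangle^{-1}+\nu(v)\}h$, and since $\nu(v)\approx\langle v\rangle^\gamma$ with $\gamma<0$ while $q|v|^2\langle v\rangle^{-1}\approx q\langle v\rangle$ for large $|v|$, the combined damping rate $\mathfrak{n}(v):=q|v|^2\langle v\rangle^{-1}+\nu(v)$ is bounded below by a positive constant uniformly in $v$. This is exactly the non-degeneracy produced by the transport operator acting on $W$ that the introduction advertises, and it is what allows an $L^\infty$ contraction argument of Guo/Vidav type to close \emph{without} any extra velocity weight beyond $w$ (which is only needed to control the cutoff kernel $K$). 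First I would set up the mild formulation along the straight characteristics $[X(s),V(s)]=[x+(s-t)v,v]$: integrating \eqref{cutoff} from the backward exit time, one gets
\begin{align*}
	h(t,x,v)=\mathbf{1}_{t_\b\le t}\,e^{-\int_{t-t_\b}^{t}\mathfrak{n}(v)\,d\tau}(wWg)(t-t_\b,x_\b,v)+\mathbf{1}_{t_\b> t}\,e^{-\mathfrak{n}(v)t}h_0(x-tv,v)+\int_{\max\{0,t-t_\b\}}^{t}e^{-\mathfrak{n}(v)(t-s)}(K_wh+\Gamma_w(h,h))(s,x-(t-s)v,v)\,ds,
\end{align*}
where $K_w$ and $\Gamma_w$ denote the conjugated operators $wWK((wW)^{-1}\cdot)$ and $wW\Gamma((wW)^{-1}\cdot,(wW)^{-1}\cdot)$. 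Because $e^{-\mathfrak{n}(v)(t-s)}\le e^{-\lambda_*(t-s)}$ for a fixed $\lambda_*>0$, the semigroup generated by the damped transport part already decays exponentially in $L^\infty_{x,v}$ with no loss.

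The core of the argument is a double Duhamel (Vidav-type) iteration to handle the loss of $\langle v\rangle$-weight in $K_w$. I would substitute the mild formula for $h$ into the $K_wh$ term once more, splitting $K_w$ via its kernel $k_w(v,v')$: for large $|v|$ or large $|v-v'|$ the kernel is small and is absorbed directly; for the remaining compact region of $(v,v')$ one uses the change of variables along the two nested characteristics (the non-grazing estimate: for $|v|,|v'|\le N$ the set of $s$ for which $x-(t-s)v-(s-s')v'$ stays near the characteristic through $x$ has small measure) to extract a genuine contraction factor. This is standard once one has the uniform exponential damping $e^{-\lambda_* (t-s)}$; the only new bookkeeping is that $W$, being bounded above and below by \eqref{Wul}, merely contributes harmless constants $e^{\pm qC}$, and $w$ is handled exactly as in the cutoff theory since $k_w$ satisfies the same pointwise bounds as in Guo's work (with the $\gamma<0$ decay now \emph{helping}, because $\nu$ appears only additively and need not be inverted). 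For the nonlinear term, the bilinear estimate $|wW\Gamma(f_1,f_2)|\lesssim \nu(v)\|wWf_1\|_{L^\infty}\|wWf_2\|_{L^\infty}$ (or the slightly sharper $\langle v\rangle^{\gamma}$ version) combined with $\int_0^t e^{-\lambda_*(t-s)}\nu(v)\,ds\lesssim 1$ gives, after the exponential time weight $e^{\lambda t}$ is inserted with $\lambda<\min\{\lambda_*,\lambda_0\}$,
\begin{align*}
	\sup_{t\ge0}e^{\lambda t}\|h(t)\|_{L^\infty_{x,v}}\lesssim \|h_0\|_{L^\infty_{x,v}}+\sup_{t\ge0}e^{\lambda_0 t}\|(wWg)(t)\|_{L^\infty(\gamma_-)}+\Big(\sup_{t\ge0}e^{\lambda t}\|h(t)\|_{L^\infty_{x,v}}\Big)^2,
\end{align*}
so a standard continuity/contraction argument under the smallness \eqref{prioricutoff} yields global existence, uniqueness, and the claimed decay. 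Nonnegativity $F=\mu+\sqrt\mu f\ge0$ follows from the usual iteration scheme for the gain-loss form of $Q$ preserving positivity, passing to the limit.

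For the continuity statement I would run the characteristic iteration scheme $F^{n+1}$ with the split $Q=Q_{\text{gain}}-\nu F$, show each $f^n$ is continuous on $[0,\infty)\times\{\overline\Omega\times\R^3\setminus\gamma_0\}$ using that, away from $\gamma_0$, the maps $(t,x,v)\mapsto(t_\b,x_\b)$ are continuous (this is where strict convexity \eqref{convex} enters, guaranteeing that the boundary is hit transversally and $t_\b$ depends continuously on $(x,v)$ off the grazing set), and that the compatibility condition $f_0(x,v)=g(0,x,v)$ on $\gamma_-$ removes the jump at $t=0$; then pass the uniform convergence $f^n\to f$ in $L^\infty_{\text{loc}}$ to the limit. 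The main obstacle I anticipate is the $K_w$ term: unlike the hard-potential case one cannot simply dominate $K_w$ by a small multiple of $\nu$, so one genuinely needs the double-Duhamel smallness together with the \emph{uniform} lower bound $\mathfrak{n}(v)\ge\lambda_*>0$ — this is the one place where the $x\cdot v$-weight $W$ is indispensable, and verifying that the $q|v|^2\langle v\rangle^{-1}$ gain survives all the changes of variables (in particular that it is not eaten by the $|\partial_v W|$ terms hidden inside $\Gamma_w$ and $K_w$, which by \eqref{Wul} are $O(q\langle v\rangle^{-1})$ and hence lower order) is the technical heart of the proof.
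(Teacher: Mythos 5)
Your setup matches the paper's: you pass to $h=wWf$, observe that the combined damping $\mathfrak{n}(v)=q|v|^2\langle v\rangle^{-1}+\nu(v)$ is uniformly bounded below (the whole point of the weight $W$), write the mild formulation along characteristics with backward exit time, estimate the nonlinearity by $|w\Gamma(g_1,g_2)|\lesssim \langle v\rangle^\gamma|wg_1|_{L^\infty}|wg_2|_{L^\infty}$, and treat continuity via strict convexity and the compatibility condition. All of that is in line with Section \ref{sec5}. But there is a genuine gap at the heart of your linear $L^\infty$ decay: you claim that after a double Duhamel (Vidav) iteration the $K_{w,W}$ contribution yields ``a genuine contraction factor'' thanks to the uniform damping and a small-measure/non-grazing argument. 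That is not what the double iteration gives. The near-grazing, large-velocity and small-time pieces are indeed small, but the main piece — after the change of variables $v'\mapsto y=x-(t-s)v-(s-s')v'$ on the compact region of $(v,v')$ — is controlled by $\int_0^t e^{-\lambda(t-s)}\|f(s)\|_{L^2_{x,v}}\,ds$ with an $O(1)$ constant, not by a small multiple of $\sup_s e^{\lambda s}\|h(s)\|_{L^\infty}$. Bounding that $L^2$ norm back by the $L^\infty$ norm (using $w^{-2}\langle v\rangle\in L^1$) just reproduces the full norm with a non-small constant, so the estimate does not close; the uniform lower bound $\mathfrak{n}\ge\lambda_*$ does not help here because $K_{w,W}$ is an $O(1)$ operator, exactly as in the hard-potential case.

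What is missing is the independent $L^2$ exponential decay for the linearized equation, which the paper supplies in Theorem \ref{Thm41} and explicitly flags as crucial for the $L^\infty$ decay of Theorem \ref{Thm51}. That $L^2$ theory is itself nontrivial: its core is Lemma \ref{Lem42}, a Poincar\'e-type control of the macroscopic part $\int_0^1\|\P f\|^2_{L^2_xL^2_D}\,ds$ by the microscopic dissipation and boundary terms, proved by a contradiction/compactness argument (velocity averaging in the interior, Lemmas \ref{Lem43}--\ref{Lem45} to rule out boundary concentration), and it is combined with both the unweighted and the $W$-weighted $L^2$ energy identities so that the extra relaxation $q|v|^2\langle v\rangle^{-1}$ upgrades the degenerate dissipation $\|f\|_{L^2_xL^2_D}$ to genuine exponential decay. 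Your proposal contains no control of the hydrodynamic part $\P f$ and no $L^2$ input at all, so the linear semigroup estimate \eqref{54} — and with it the nonlinear iteration $h^{n+1}=h^{n+1}_g+h^{n+1}_\Gamma$, the uniqueness argument, and the decay rate — cannot be established as written. The remaining ingredients (Duhamel iteration for the nonlinear problem, positivity, and the continuity argument via Lemma \ref{Lem52}) are consistent with the paper once this linear layer is restored.
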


Noticing $W\geq 1/ C_\Omega$ by \eqref{Wul}, we know that the smallness assumption \eqref{prioricutoff} can be replaced by $\|wf_0\|_{L^\infty_{x,v}}+\sup_{t\ge 0}e^{\lam_0 t}\|wg(t)\|_{L^\infty(\gamma_-\times\R^3)}\le C_\Omega\delta$.
\smallskip

The above result gives the global-in-time existence and large-time behavior for small-amplitude classical solutions to cutoff Boltzmann equation with/without cutoff in the space $L^\infty_{x,v}$ for the case of soft potentials. In particular, we finally recover the exponential decay $e^{-\lambda t}$ as $t\to \infty$ with an explicitly constructive constant $\lambda>0$, which is an essential improvement to the sub-exponential decay $e^{-\lambda t^p}$ with a parameter $0<p<1$ depending only on the collision kernel  in \cite{Caflisch1980a,Gressman2011,Strain2007} where they also have imposed an extra exponential velocity decay on initial data. In the current work, we obtain the exponential decay of solutions in the function space 
that is equivalent to $L^\infty_{x,v}$ without any velocity weight lost. 

In the end we conclude this section with comments on some possible applications of our new approach as well as prospective problems that we think are important to be further explored in the future. In particular, we are able to employ the space-velocity weight function $W=W(x\cdot v,v)$ of the simple form \eqref{W} or of the more general form \eqref{17bb} to create an extra dissipation even for the transport equation as in \eqref{exdiss}. Thus, we believe that our general method can be widely applied in kinetic problems with the transport term. First of all, it is interesting to see if it is applicable to the cutoff Boltzmann equation in general bounded domains supplemented with other types of boundary conditions in the $L^2\cap L^\infty$ framework  developed first by Guo \cite{Guo2009} and extensively studied later by many people, for instance, whether the results for soft potentials in \cite{Liu2016} and \cite{Duan2019a} could be refined using our weight function. Moreover, it is possible to apply the current approach to study the non-cutoff Boltzmann equation, Landau equation and related models with self-consistent forces in case of soft potentials for the problems in \cite{Duan2020,Guo2012,Guo2020}. A more challenging problem proposed in an original work \cite{Gualdani2017} is to extend their perturbation theory with polynomial tails to soft potentials. In this direction we mention recent great progresses for the Landau equation \cite{Carrapatoso2017} and the non-cutoff Boltzmann equation \cite{Alonso2020} and \cite{Alonso2020a} in torus.

The rest of the paper is arranged as follows. In section \ref{sec2}, we split the linearized collision operator and give some basic estimates. In section \ref{sec3}, we recover the spectrum gap for soft potential and prove Theorem \ref{Main1} by using relatively compact perturbation. In section \ref{sec4}, we analyze the linear problem \eqref{linear} and \eqref{2a} to give the proof of Theorem \ref{MaindecayLandau}.  In section \ref{sec5}, we study the nonlinear problem \eqref{1} and \eqref{2} with weight $W$ involved for the cutoff Boltzmann equation. Exponential time decay of solutions can be observed for soft potential.

\section{Basic estimates}\label{sec2}

In this section we list some basic estimates on the linear and nonlinear collision operators in the Landau and Boltzmann cases specified in the previous section.

\begin{Lem}\label{Lem21}
	Let $R>0$ be arbitrary. Assume $\gamma\ge -3$ for Landau case, $\gamma>\max\{-3,-\frac{3}{2}-2s\}$ for non-cutoff Boltzmann case and $\gamma>-3$ for cutoff Boltzmann case, we split $L=-A+K$ with respect to $R$ (the decomposition depends on $R$) such that 
	\begin{align}\label{16}
		(A f,f)_{L^2_{v}} &\approx |f|^2_{L^2_{D}},\\
		|(A f,g)_{L^2_{v}}| &\lesssim  |f|_{L^2_{D}}|g|_{L^2_D}.\label{16a}
	\end{align}
	There exist $c_1,C_1>0$ such that 
\begin{align}\label{e21}
	(-Lf,f)_{L^2_v}\ge c_1|\{\I-\P\}f|_{L^2_D}^2,
\end{align}
and 
\begin{align}\label{e22}
	|(W^2Af,f)_{L^2_v}|+|(W^2Lf,f)_{L^2_v}|\le C_1|f|_{L^2_D}^2. 
\end{align}
For Landau and non-cutoff Boltzmann case, we further have 
\begin{align}\label{17}
	|WKf|_{L^2_v}\lesssim |\chi_Rf|_{L^2_v}, 
\end{align}
for $q\ge 0$ in \eqref{W}. Here $\chi_R$ is given by \eqref{def.chieps}. 
	
\end{Lem}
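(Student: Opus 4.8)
The plan is to handle each collision operator separately, since the splitting $L=-A+K$ has a quite different flavor in the Landau, non-cutoff Boltzmann and cutoff Boltzmann cases; however, the overall strategy is uniform. First, for each case I would recall the standard Grad-type (or Guo-type) decomposition of $L$ with a cutoff parameter $R$: write the kernel-type operator in $L$ using a smooth truncation $\chi_R=\chi_R(v)$ (to be specified in \eqref{def.chieps}, presumably $\chi$ supported in $|v|\le 2R$, equal to $1$ on $|v|\le R$), setting $K$ to be the part of the integral operator localized to $|v|\le R$ plus the smooth remainder, and $-A=L-K$ to be the genuinely coercive part. For the cutoff Boltzmann case this is literally Grad's splitting $L=-\nu(v)+K_{\rm cut}$, then one moves the large-velocity tail of $K_{\rm cut}$ into $A$; for Landau and non-cutoff Boltzmann this is the decomposition used in \cite{Guo2002a,Gressman2011}. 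The conditions $\gamma\ge-3$ (Landau), $\gamma>\max\{-3,-3/2-2s\}$ (non-cutoff) and $\gamma>-3$ (cutoff) are exactly what is needed to make the truncated kernel $K$ a bounded smoothing operator; I would just cite these facts.

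Next I would verify \eqref{16} and \eqref{16a}. Since $A$ is (up to the bounded remainder from $K$) the principal part of $-L$, the equivalence $(Af,f)_{L^2_v}\approx|f|^2_{L^2_D}$ is the coercivity of the dissipation part, which holds \emph{without} the orthogonality restriction to $\ker L$ precisely because $A$ differs from $-L$ by a compact/lower-order piece that has been chosen to absorb the degeneracy on $\ker L$; this is where $R$ large enters. The estimate \eqref{16a} is then the associated continuity/Cauchy--Schwarz bound for the bilinear form $(Af,g)_{L^2_v}$ in the $|\cdot|_{L^2_D}$ norm, which is standard for each of the three dissipation norms defined in the Notations subsection. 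The bound \eqref{e21}, $(-Lf,f)_{L^2_v}\ge c_1|\{\I-\P\}f|^2_{L^2_D}$, is just the coercivity estimate \eqref{coerv} restated with the dissipation norm in place of $\nu(v)$ weight; I would cite \cite{Guo2002a,Guo2003,Gressman2011,Mouhot2006} and the references already listed after \eqref{coerv}.

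The genuinely new estimates are \eqref{e22} and \eqref{17}, which involve the weight $W$. For \eqref{e22}, the key point is that by \eqref{Wul} one has $e^{-qC}\le W\le e^{qC}$, so $W^2$ is a bounded multiplier with bounds depending on $q$ but not on $v$; thus $(W^2Af,f)_{L^2_v}$ and $(W^2Lf,f)_{L^2_v}$ are controlled by $|f|^2_{L^2_D}$ up to a constant $C_1$ absorbing $e^{2qC}$ and the continuity constants from \eqref{16a} and the analogous bound for $L$. One must be mildly careful in the Landau and non-cutoff cases because $A$ and $L$ contain derivatives in $v$ (Landau) or a fractional-difference quadratic form (non-cutoff), so $(W^2Af,f)_{L^2_v}$ is not simply $(Af,W^2f)_{L^2_v}$; one integrates by parts (Landau) or symmetrizes the difference quotient (non-cutoff) and the commutators produce terms with $\pa_v W$, which by \eqref{Wul} gain a factor $\langle v\rangle^{-1}$ and hence are harmless lower-order contributions bounded by $|f|^2_{L^2_D}$. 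For \eqref{17}, one uses \eqref{Wul} again, $|WKf|_{L^2_v}\le e^{qC}|Kf|_{L^2_v}$, together with the smoothing property of the truncated operator $K$: for Landau and non-cutoff Boltzmann, $K$ maps $L^2_v$ into a space controlled by the velocity-localized norm $|\chi_R f|_{L^2_v}$ because the kernel of $K$ is integrable and compactly supported in the relative-velocity variable after truncation. I expect the main obstacle to be precisely this last point — making the commutator estimates in \eqref{e22} rigorous for the non-cutoff operator, where the weight $W$ must be passed through the singular angular kernel and one needs a cancellation lemma (as in \cite{Gressman2011,Alexandre2012}) to see that the extra terms generated by $W(v')-W(v)$ are integrable and dominated by $|f|^2_{L^2_D}$; here the smoothness bound $|\pa_v W|\lesssim q\langle v\rangle^{-1}W$ and $|\pa^2_v W|\lesssim q^2\langle v\rangle^{-2}W$ (a consequence of differentiating \eqref{W2} once more) is what saves the day, and I would record that as a preliminary computation before starting the estimate.
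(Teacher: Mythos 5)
Your plan follows essentially the same route as the paper: the same $\chi_R$-truncated splittings (Guo's for Landau, Pao/Gressman--Strain's for non-cutoff, Grad's for cutoff), coercivity \eqref{e21} quoted from the standard references, \eqref{16}--\eqref{16a} from the known trilinear bounds plus smallness in $\varepsilon$ and largeness in $R$, and \eqref{e22}, \eqref{17} from $W\approx 1$ together with $|\partial_{v}W|\lesssim q\langle v\rangle^{-1}W$ handled by integration by parts (Landau) or by rerunning the Gressman--Strain estimates with the weight (non-cutoff). The only deviation is inessential: in the cutoff case the paper simply keeps $A=\nu(v)$ and does not claim \eqref{17} there, so your extra step of moving the large-velocity tail of $K$ into $A$ is not needed.
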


\begin{proof}
\noindent{\bf Case I: Landau equation.}
We will apply the decomposition $L=-A+K$ in \cite[Section 4.2]{Yang2016}. Let $\varepsilon,R>0$ and choose  smooth cutoff functions $\chi_\ve(|v|),\chi_R(|v|)\in[0,1]$ such that 
\begin{equation}
	\label{def.chieps}
	\chi_R(|v|)=1\text{ if } |v|<R;\  \chi_R(|v|)=0 \text{ if } |v|>2R.
\end{equation}
Note that by \cite[Lemma 1]{Guo2002a}, we have 
\begin{align*}
	\Gamma(f,\mu^{1/2})
	&= -\mu^{-1/2}\partial_{v_i}\Big\{\mu\Big[\phi^{ij}*\Big(\mu\partial_{v_j}\big[\mu^{-1/2}f\big]\Big)\Big]\Big\}.
\end{align*}
Here and below the repeated indices are implicitly summed over.  Then we can split $L=-A+K$ with   
\begin{align}\label{18}
	-Af &= \partial_{v_i}(\sigma^{ij}\partial_jf) - \sigma^{ij}\frac{v_i}{2}\frac{v_j}{2}f
	+\partial_{v_i}\sigma^i(1-\chi_R)f+A_1f\notag\\
	&\qquad+(K_1-\chi_RK_1\chi_R)f,\\
	Kf &= \partial_{v_i}\sigma^i\chi_Rf + \chi_RK_1\chi_Rf,\label{18a}
\end{align}
and $A_1$ and $K_1$ are respectively given by 
\begin{align*}
	A_1f &= -\mu^{-1/2}\partial_{v_i}\Big\{\mu\Big[\Big(\phi^{ij}\chi_\ve\Big)*\Big(\mu\partial_{v_j}\big[\mu^{-1/2}f\big]\Big)\Big]\Big\},\\
	K_1f &=  -\mu^{-1/2}\partial_{v_i}\Big\{\mu\Big[\Big(\phi^{ij}\big(1-\chi_\ve\big)\Big)*\Big(\mu\partial_{v_j}\big[\mu^{-1/2}f\big]\Big)\Big]\Big\},
\end{align*}
with the convolution taken with respect to the velocity variable $v$. 
From \cite[Lemma 3]{Guo2002a}, we know that  
\begin{align}\label{65a}
	|\partial_\beta\sigma^{ij}(v)|+|\partial_\beta\sigma^i(v)|\le C_\beta(1+|v|)^{\gamma+2-|\beta|},
\end{align}and 
\begin{align}\label{65}
	&\sigma^{ij}g_i\overline{g_j}=\lambda_1(v)
	|P_vg|^2+\lambda_2(v)
	|(I-P_v)g|^2,\\
	&\sigma^{ij}\frac{v_i}{2}\frac{v_j}{2}|g|^2=\lambda_1(v)|v|^2|g|^2,\label{66}
\end{align}
where $P_v$ is the projection along the direction $v$ defined as
	$P_vg = \frac{v}{|v|}\sum^3_{i=1}\frac{v_i}{|v|}g_i$.
Moreover, it holds that $\lambda_1(v),\lambda_2(v)>0$ both are strictly positive for all $v\in \R^3$ and that there are constants $c_1,c_2>0$ such that as $|v|\to\infty$,
\begin{align*}
	\lambda_1(v)\sim c_1(1+|v|)^\gamma,\quad\lambda_2(v)\sim c_2(1+|v|)^{\gamma+2}.
\end{align*}	
Also, \cite[Corollary 1]{Guo2002a} and \cite[Lemma 5]{Strain2007} show that 
\begin{align}\label{27a}
	|g|^2_{L^2_D} \approx |\<v\>^{\frac{\gamma}{2}}P_v\nabla_vg|^2_{L^2_v}+|\<v\>^{\frac{\gamma+2}{2}}(I-P_v)\nabla_vg|^2_{L^2_v}+|\<v\>^{\frac{\gamma+2}{2}}g|^2_{L^2_v}.
\end{align}
By \eqref{65}, \eqref{66} and \eqref{18}, we have 
\begin{align}\label{20a}
	\big|(A f,f)_{L^2_{x,v}}- \|f\|^2_{L^2_xL^2_{D}} \big|\le 
	 \big|(\partial_{v_i}\sigma^i(1-\chi_R)f+A_1f+(K_1-\chi_RK_1\chi_R)f,f)_{L^2_{x,v}}\big|.
\end{align}
By \eqref{65a}, $|\partial_{v_i}\sigma^i|\lesssim \<v\>^{\gamma+1}$ and hence, 
\begin{align}\label{20d}
	|(W^2\partial_{v_i}\sigma^i(1-\chi_R)f,f)_{L^2_v}|&\lesssim R^{-1}|\<v\>^{\frac{\gamma+2}{2}}f|^2_{L^2_v}\le R^{-1}|f|^2_{L^2_D}. 
\end{align}
Also, noticing $W\lesssim 1$ and taking integration by parts with respect to $v_i$, we have 
\begin{align}
	&|(WA_1f,Wf)_{L^2_v}|\notag\\
	&= \Big|\int_{\R^3}\int_{\R^3}\partial_{v_i}\big(W^2\mu^{-1/2}(v)f(v)\big)\phi^{ij}(v-v')\chi_\ve(|v-v'|)\notag\\
	&\qquad\qquad\qquad\qquad\qquad\times\mu(v)\mu(v')\partial_{v_j'}\big[\mu^{-1/2}f\big](v')\,dv'dv\Big|\notag\\
	&\le\notag \Big(\int_{\R^3}\int_{\R^3}|\phi^{ij}(v-v')|^2\chi_\ve(|v-v'|)\mu^{\frac{1}{2}}(v)\mu^{\frac{1}{2}}(v')\,dv'dv\Big)^{1/2}\notag\\
	&\notag\qquad\times\Big(\int_{\R^3}\int_{\R^3}\big|\partial_{v_i}\big(W^2\mu^{-\frac{1}{2}}f\big)(v)\big|^2\mu^{\frac{3}{2}}(v)\mu^{\frac{3}{2}}(v')\big|\partial_{v_j'}\big[\mu^{-\frac{1}{2}}f\big]\big|^2\,dv'dv\Big)^{1/2}\notag\\
	&\lesssim \varepsilon^{2\gamma+7}|f|^2_{L^2_D}, \label{14}
\end{align}
where we recall \eqref{def.chieps} for the definition of $\chi_\ve(\cdot)$ depending on $\varepsilon$.  
Note that this estimate holds for both $q=0$ and $q>0$.
Similarly, for the part $1-\chi_\ve$, there's no singularity near $v-v'=0$ and we can put the derivative to $\phi^{ij}(v-v')(1-\chi_\ve)(|v-v'|)\mu(v)\mu(v')$ to obtain 
\begin{align}
	&|(WK_1f,Wg)_{L^2_v}|\notag\\
	&= \Big|\int_{\R^3}\int_{\R^3}\partial_{v_i}\big(W^2\mu^{-1/2}g\big)(v)\phi^{ij}(v-v')(1-\chi_\ve)(|v-v'|)\notag\\
	&\qquad\qquad\qquad\qquad\qquad\times\mu(v)\mu(v')\partial_{v_j'}\big[\mu^{-1/2}f\big](v')\,dv'dv\Big|\notag\\
	&\lesssim\notag C_\varepsilon\int_{\R^3}\int_{\R^3}\big|\big(W^2\mu^{-1/2}g\big)(v)\big|\mu^{2/3}(v)\mu^{2/3}(v')\big|\big[\mu^{-1/2}f\big](v')\big|\,dv'dv\notag\\
	&\lesssim\notag C_\varepsilon\int_{\R^3}\int_{\R^3}|g(v)|\mu^{1/6}(v)\mu^{1/6}(v')|f(v')|\,dv'dv\notag\\
	&\lesssim C_\varepsilon|\mu^{1/10}f|_{L^2_v}|\mu^{1/10}g|_{L^2_v}.
	\label{15}
\end{align}
Thus for $q\ge 0$ in \eqref{W}, 
\begin{align}\label{15a}
	|(W(K_1-\chi_RK_1\chi_R)f,Wf)_{L^2_v}|&\lesssim e^{-R^2/40}|\mu^{1/20}f|^2_{L^2_v}\lesssim e^{-R^2/40}|f|^2_{L^2_D}.
\end{align}
Noticing $W=e^{-q\frac{x\cdot v}{\<v\>}}\approx 1$, if we choose $q=0$ in \eqref{14} and \eqref{15a}, $R>0$ large and $\varepsilon>0$ small enough, then \eqref{20a} gives 
\begin{align*}
	(A f,f)_{L^2_{v}} \approx |f|^2_{L^2_{D}}.
\end{align*}
For $q\ge 0$ in \eqref{W}, we have 
\begin{align}\label{15d}
	|WKf|_{L^2_v}\lesssim |\chi_Rf|_{L^2_v}. 
\end{align}
Also, using \cite[Proposition 1]{Strain2013}, i.e. $|(\Gamma(f,g),h)_{L^2_v}|\lesssim |f|_{L^2_v}|g|_{L^2_D}|h|_{L^2_D}$, \eqref{14} and \eqref{15}, we have 
\begin{align*}
	|(Af,g)_{L^2_v}|\lesssim |f|_{L^2_D}|g|_{L^2_D}. 
\end{align*}
 To prove \eqref{e22}, we only need to consider the first two terms in \eqref{18}, since the other three terms have been discussed in \eqref{20d}, \eqref{14}, \eqref{15} and \eqref{15a}. To the end, we write $\pa_{v_i}=\pa_i$ with $1\leq i\leq 3$ for brevity. 
 We obtain from \eqref{65} and \eqref{66} that 
 \begin{align}\label{20b}\notag
 	&\quad\,(W\partial_i(\sigma^{ij}\partial_jf),Wf)_{L^2_{x,v}} - (W\sigma^{ij}\frac{v_i}{2}\frac{v_j}{2}f,Wf)_{L^2_{x,v}}\\
 	&=\notag -(\partial_i(W^2)\sigma^{ij}\partial_jf,f)_{L^2_{x,v}} - (W^2\sigma^{ij}\partial_jf,\partial_if)_{L^2_{x,v}} - (W\sigma^{ij}\frac{v_i}{2}\frac{v_j}{2}f,Wf)_{L^2_{x,v}}\\
 	&\le -\|f\|_{L^2_xL^2_{D}} - (\partial_i(W^2)\sigma^{ij}\partial_jf,f)_{L^2_{x,v}},
 \end{align}
 since $W\gtrsim 1$. 
 For the second right-hand term, using integration by parts with respect to $\partial_j$, one has 
 \begin{align*}
 	(\partial_i(W^2)\sigma^{ij}\partial_jf,f)_{L^2_{x,v}} 
 	&= -(\partial_{i}(W^2)\sigma^{ij}f,\partial_j f)_{L^2_{x,v}} - (\partial_{ij}(W^2)\sigma^{ij}f, f)_{L^2_{x,v}} \\
 	&\quad-(\partial_{i}(W^2)\partial_j\sigma^{ij}f, f)_{L^2_{x,v}}.
 \end{align*}
 Noticing that the first right-hand term is the same as the left-hand side, we have 
 \begin{align}\label{20c}\notag
 	(\partial_i(W^2)\sigma^{ij}\partial_jf,f)_{L^2_{x,v}} &= -\frac{1}{2} (\partial_{ij}(W^2)\sigma^{ij}f, f)_{L^2_{x,v}} -\frac{1}{2}(\partial_{i}(W^2)\partial_j\sigma^{ij}f, f)_{L^2_{x,v}}\\
 	&\lesssim \|\<v\>^{\frac{\gamma+2}{2}}f\|^2_{L^2_{x,v}}\lesssim \|f\|^2_{L^2_xL^2_D},
 \end{align}
 where we have used the fact that $|\partial_i(W^2)|\lesssim q W^2$ and $|\partial_{ij}(W^2)|\lesssim q W^2$ thanks to \eqref{W2}, as well as \eqref{65a}. 
Collecting the estimates \eqref{20d}, \eqref{14}, \eqref{15}, \eqref{15a}, \eqref{20b} and \eqref{20c}, we have 
\begin{align}\notag
	(W^2Af,f)_{L^2_{x,v}}\lesssim \|f\|_{L^2_xL^2_D}^2.
\end{align}
Together with \eqref{15d}, we obtain \eqref{e22}.

\medskip
\noindent{\bf Case II: Non-cutoff Boltzmann equation.}
We will use Pao's splitting as $\tilde{{\nu}}(v)={\nu}(v)+{\nu}_K(v)$; cf. \cite[pp. 568 eq. (65), (66)]{Pao1974} and \cite{Gressman2011}. Then $\nu(v)\approx\<v\>^{\gamma+2s}$ and $|{\nu}_K(v)|\lesssim \<v\>^\gamma$. We split $L=-A+K$ with 
\begin{align}\notag
	-Af &= \Gamma(\mu^{1/2},f)-{\nu}_K(v)f+\nu_K(v)(1-\chi_R)f\\&\notag\qquad + (\Gamma(f,\mu^{1/2})-\chi_R\Gamma(\chi_Rf,\mu^{1/2})),\\
	Kf &= \chi_R\Gamma(\chi_Rf,\mu^{1/2})+{\nu}_K(v)\chi_Rf.\label{18b}
\end{align}
Here $R>0$ will be chosen large later. 
Recall that $W\approx 1$, so such weight actually acts like a constant. Thus, one can apply the same proof of \cite[Lemma 2.5]{Gressman2011} to deduce that 
\begin{equation}
	(-\Gamma(\mu^{1/2},f)+{\nu}_K(v)f,f)_{L^2_v}\approx |f|^2_{L^2_D}\label{10aa}.
\end{equation}
According to \cite[Lemma 2.15]{Alexandre2012}, we have 
\begin{align*}
	|(\Gamma(f,\mu^{1/2}),g)_{L^2_v}|\lesssim |\mu^{1/10^3}f|_{L^2_v}|\mu^{1/10^3}g|_{L^2_v},
\end{align*}
and hence 
\begin{align}\label{212}
	(\Gamma(f,\mu^{1/2})-\chi_R\Gamma(\chi_Rf,\mu^{1/2}),f)_{L^2_v}\lesssim e^{-\frac{(2R)^2}{2000}}\|f\|_{L^2_D}^2.
\end{align}
In addition, 
\begin{align}\label{212a}
	|({\nu}_K(v)\chi_Rf,f)_{L^2_v}|\lesssim R^{-2s}|\<v\>^{\frac{\gamma+2s}{2}}f\|_{L^2_v}^2\lesssim R^{-2s}|f|^2_{L^2_D}. 
\end{align}
Combining \eqref{10aa}, \eqref{212} and \eqref{212a}, we have 
\begin{align*}
	(Af,f)_{L^2_v}\approx |f|_{L^2_D}^2. 
\end{align*}
Noticing $W\lesssim 1$, one can apply the same calculation as \cite[Lemma 2.4]{Gressman2011} to obtain 
\begin{equation}
	|(WAf,Wf)_{L^2_v}|\lesssim |f|^2_{L^2_D}.\label{10a}
\end{equation}
On the other hand, by \cite[eq. (6.6), pp.  817]{Gressman2011} i.e $|(\Gamma(f,g),h)_{L^2_v}|\lesssim |f|_{L^2_v}|g|_{L^2_D}|h|_{L^2_D}$, \eqref{212} and \eqref{212a}, we have 
\begin{equation*}
	|(Af,g)_{L^2_v}|\lesssim |f|_{L^2_D}|g|_{L^2_D}.
\end{equation*}
Noticing $W\lesssim 1$, we have 
\begin{align}\label{10d}
	|WKf|_{L^2_v}\lesssim |\chi_Rf|_{L^2_v}, 
\end{align}
for $q\ge 0$ in \eqref{W}. 
The estimate \eqref{e22} follows from \eqref{10a} and \eqref{10d}.
Also, \cite[eq. (2.15)]{Gressman2011} shows that 
\begin{align}\label{27b}
	|\<v\>^{\frac{\gamma+2s}{2}}g|^2_{L^2_v}+|\<v\>^{\frac{\gamma}{2}}\<D_v\>^sg|^2_{L^2_v}\lesssim |g|^2_{L^2_D}\lesssim |\<v\>^{\frac{\gamma+2s}{2}}\<D_v\>^sg|^2_{L^2_v}.
\end{align}

\medskip
\noindent{\bf Case III: Cutoff Boltzmann equation.}
As in \cite{Guo2003}, we split $L=-A+K$ with 
\begin{align}\label{333a}
	Af &= {\nu}(v)f = f(v)\int_{\R^3}\int_{\S^2}B(v-v_*,\omega)\mu(v_*)\,d\omega dv_*,\\
	Kf &=\notag \int_{\R^3}\int_{\S^2}B(v-v_*,\omega)\mu^{1/2}(v_*)\\	&\qquad\quad\times\Big(\mu^{1/2}(v'_*)f(v')+\mu^{1/2}(v')f(v'_*)-\mu^{1/2}(v)f(v_*)\Big)\,d\omega dv_*.\notag
\end{align}
By \cite[Lemma 1]{Guo2003}, we have 
\begin{align*}
	|(Kf,g)_{L^2_v}|\le C|f|_{L^2_D}|g|_{L^2_D}, 
\end{align*}
and hence, by definition \eqref{defnu}, 
\begin{align}\label{nu2}
	|\nu^{-1/2}Kf|_{L^2_v}\le C|\nu^{1/2}f|_{L^2_v}. 
\end{align}
Consequently, noticing $W\lesssim 1$, we have 
\begin{align*}
	|(W^2Af,f)_{L^2_v}|+|(W^2Lf,f)_{L^2_v}|\le C |\nu^{1/2}f|_{L^2_v}. 
\end{align*}

\medskip 
In the end, we point out that for both Landau and Boltzmann cases, it follows from \cite[Lemma 5]{Guo2002a}, \cite[Theorem 8.1]{Gressman2011} and \cite[Lemma 3]{Guo2003} that 
\begin{align*}
	(-Lf,f)_{L^2_v} \ge c_1 |\{\I-\P\}f|^2_{L^2_D},
\end{align*}
with a generic constant $c_1>0$.
%
\end{proof}

\section{Spectral gap}\label{sec3}

In this section, we are going to show Theorem \ref{Main1} on the formation of the spectral gap for the operator $\L=-v\cdot\nabla_x +L$ in the space $X=L^2_{x,v}$ equipped with the inner product $(\cdot,\cdot)_X$ as in \eqref{X} where the constant $C_0$ is chosen to be large. 

	Split $L=-A+K$ as in Section \ref{sec2} and  consider operators 
\begin{align*}
	\Lambda f:= 	v\cdot\na_x f+A f \ \text{ and } \ Kf.
\end{align*} 
We start with the following

\begin{Lem}\label{lem.coxv}
	Let $q>0$ in \eqref{W}. 
	For Landau case and Boltzmann case, there are constants $C_0,c_0,\lam>0$ such that 
	\begin{align}\label{10}
		(\Lambda f,f)_X \ge 
		\frac{C_0\lam}{2}\|f\|^2_{L^2_xL^2_{D}} + q\|\<v\>^{-\frac{1}{2}}|v|Wf\|^2_{L^2_{x,v}}
		\ge c_0\|\<v\>^{\frac{1}{2}}f\|_X,
	\end{align}for any sufficiently smooth $f\in D(\Lambda)=D(\L)$ with $D(\L)$ given in \eqref{def.ddl}, where $C_0$ (large) is a parameter in \eqref{X}. 
\end{Lem}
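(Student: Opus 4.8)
\medskip

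The plan is to compute $(\Lambda f, f)_X$ directly by expanding the inner product $(\cdot,\cdot)_X$ from \eqref{X} into its two pieces, the $C_0$-weighted $L^2_{x,v}$ part and the $W$-weighted part, and then handle the transport term $v\cdot\nabla_x f$ and the collision part $Af$ separately in each. First I would write
\[
	(\Lambda f,f)_X = C_0(v\cdot\nabla_x f + Af, f)_{L^2_{x,v}} + (W(v\cdot\nabla_x f + Af), Wf)_{L^2_{x,v}}.
\]
For the plain $L^2_{x,v}$ piece: the transport term $(v\cdot\nabla_x f, f)_{L^2_{x,v}}$ integrates by parts in $x$ to a boundary term $\tfrac12\int_{\partial\Omega}\int_{\R^3}(v\cdot n)|f|^2\,dv\,dS(x)$, which is nonnegative on $\gamma_+$ and vanishes on $\gamma_-$ by the zero inflow condition in $D(\mathcal L)$, so it can be dropped; the collision term gives $C_0(Af,f)_{L^2_{x,v}}\approx C_0\|f\|^2_{L^2_x L^2_D}$ by \eqref{16}. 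For the $W$-weighted piece: the key computation is the transport term against $W^2$,
\[
	(v\cdot\nabla_x f, W^2 f)_{L^2_{x,v}} = -\tfrac12\big(v\cdot\nabla_x(W^2)\,f, f\big)_{L^2_{x,v}} + \tfrac12\int_{\partial\Omega}\int_{\R^3}(v\cdot n)W^2|f|^2,
\]
and by \eqref{W1} one has $-v\cdot\nabla_x(W^2) = 2q\tfrac{|v|^2}{\langle v\rangle}W^2$, which produces exactly the good term $q\|\langle v\rangle^{-1/2}|v|\,Wf\|^2_{L^2_{x,v}}$; the boundary term is again dropped using $\gamma_-$ vanishing and $W>0$. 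The remaining piece $(W^2 Af, f)_{L^2_{x,v}}$ is controlled by $C_1\|f\|^2_{L^2_x L^2_D}$ using \eqref{e22}.

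\medskip

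Assembling these, I get a lower bound of the shape
\[
	(\Lambda f,f)_X \ge (C_0 c - C_1)\|f\|^2_{L^2_x L^2_D} + q\|\langle v\rangle^{-1/2}|v|\,Wf\|^2_{L^2_{x,v}},
\]
where $c>0$ is the implicit constant in \eqref{16}. Choosing $C_0$ large enough (this is where the "large $C_0$" in \eqref{X} is used) makes $C_0 c - C_1 \ge \tfrac{C_0\lambda}{2}$ for a suitable $\lambda>0$, giving the first inequality in \eqref{10}. For the second inequality, I would combine the two nonnegative terms: the dissipation norm $\|f\|^2_{L^2_x L^2_D}$ controls $\|\langle v\rangle^{\kappa/2}f\|^2_{L^2_{x,v}}$ (with $\kappa<0$) — this bounds the low-velocity regime $|v|\le R_0$ where $\langle v\rangle^{\kappa}\gtrsim 1$ — while the extra term $q\|\langle v\rangle^{-1/2}|v|\,Wf\|^2_{L^2_{x,v}} \approx q\|\langle v\rangle^{-1/2}|v|\,f\|^2_{L^2_{x,v}}$ (using $W\approx 1$ from \eqref{Wul}) behaves like $\langle v\rangle^{1/2}$ weight for large $|v|$ and hence dominates $\langle v\rangle^{1/2}$ there. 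Splitting $\R^3_v = \{|v|\le R_0\}\cup\{|v|> R_0\}$ and using $C_0\lambda\|f\|^2_{L^2_xL^2_D}\gtrsim \|\langle v\rangle^{1/2}f\|^2_{L^2_{x,v}(|v|\le R_0)}$ on the bounded part and $q\|\langle v\rangle^{-1/2}|v|f\|^2 \gtrsim \|\langle v\rangle^{1/2}f\|^2_{L^2_{x,v}(|v|>R_0)}$ on the unbounded part, then re-inserting the $\|\cdot\|_X \approx \|\cdot\|_{L^2_{x,v}}$ equivalence from \eqref{Wul}, yields $c_0\|\langle v\rangle^{1/2}f\|_X$ for $c_0>0$ small (I note the statement writes $\|\cdot\|_X$ where it presumably means $\|\cdot\|_X^2$).

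\medskip

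The step I expect to be the main obstacle is the careful bookkeeping of the boundary terms and the sign of the weighted transport integration by parts: one must verify that the $W^2$-weighted boundary contribution on $\gamma_+$ is indeed nonnegative and that on $\gamma_-$ it genuinely vanishes for functions in $D(\mathcal L)$ (a trace/density argument, justified since $D(\mathcal L)\supset C^\infty_c(\Omega\times\R^3)$ and the statement restricts to "sufficiently smooth $f$"), and that all the $x$-derivatives landing on $W^2$ in the various integrations by parts either produce the good term or are absorbable via $|\partial_{x_i}W|\le CqW$ from \eqref{Wul} into the dissipation norm with constants independent of the large parameter $C_0$ — so that the final choice of $C_0$ is not circular. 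The collision estimates \eqref{e22} and \eqref{16} are quoted black-box, so the real work is purely the transport/weight interaction and the large-velocity/small-velocity split.
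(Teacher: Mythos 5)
Your proposal is correct and follows essentially the same route as the paper: integrate by parts in $x$ in both the plain and $W$-weighted pieces, use the zero inflow condition and the sign on $\gamma_+$ to drop boundary terms, extract the good term $q\|\langle v\rangle^{-1/2}|v|Wf\|^2_{L^2_{x,v}}$ from $-v\cdot\nabla_x(W^2)=2q|v|^2\langle v\rangle^{-1}W^2$ (the paper writes this equivalently as $Wv\cdot\nabla_xf=v\cdot\nabla_x(Wf)+q|v|^2\langle v\rangle^{-1}Wf$), control $(W^2Af,f)$ by \eqref{e22} with constants independent of $C_0$, and take $C_0$ large. Your final velocity split is just an explicit version of the paper's observation that $\langle v\rangle^{1/2}\lesssim\nu+\langle v\rangle^{-1/2}|v|$ with $\|\nu f\|_{L^2_{x,v}}\lesssim\|f\|_{L^2_xL^2_D}$, and your remark that the right-hand side of \eqref{10} should read $c_0\|\langle v\rangle^{1/2}f\|_X^2$ matches the paper's own proof.
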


\begin{proof}
It follows from \eqref{W1} that   
\begin{align*}
	W\Lambda f = v\cdot \nabla_x (Wf) + \frac{q|v|^2}{\langle v\rangle}Wf + WAf.
\end{align*}
	Using the zero-inflow boundary condition \eqref{inflow} in \eqref{X1}, we have 
	\begin{align}\notag\label{eq2}
		&\quad\,(W\Lambda f,Wf)_{L^2_{x,v}} = \big(v\cdot \nabla_x (Wf) + \frac{q|v|^2}{\langle v\rangle}Wf + WAf,Wf\big)_{L^2_{x,v}} \\
		&\notag= \int_{\pa\Omega}\int_{v\cdot n>0}v\cdot n(x)|Wf|^2\,dvdS(x) +\int_{\pa\Omega}\int_{v\cdot n<0}v\cdot n(x)|Wg|^2\,dvdS(x)\\&\notag\qquad+q(|v|^2\<v\>^{-1}Wf,Wf)_{L^2_{x,v}} + (W^2Af,f)_{L^2_{x,v}}\\
		&\ge q\||v|\<v\>^{-\frac{1}{2}}Wf\|^2_{L^2_{x,v}} + (W^2Af,f)_{L^2_{x,v}},
	\end{align}
	and 
	\begin{align}\label{eq1}\notag
		\quad\,(\Lambda f,f)_{L^2_{x,v}} = (v\cdot\na_xf+A f,f)_{L^2_{x,v}} 
		&\notag\ge \int_{\pa\Omega}\int_{v\cdot n>0}v\cdot n(x)|f|^2\,dvdS(x)+(Af,f)_{L^2_{x,v}}
		\\&\ge (Af,f)_{L^2_{x,v}}\ge \lambda\|f\|_{L^2_xL^2_D}^2, 
	\end{align}
with some constant $\lambda>0$. 
Here the last inequality follows from \eqref{16}. 

	In view of \eqref{eq2} and \eqref{eq1}, we only need to find the upper bound of $(WAf,Wf)_{L^2_{x,v}}$. By \eqref{e22}, we know that 
	\begin{align*}
		|(W Af,Wf)_{L^2_{x,v}}|&\lesssim \|f\|_{L^2_xL^2_D}^2.
	\end{align*}
	Thus, choosing $C_0>0$ sufficiently large, we deduce from definition \eqref{X1}, \eqref{eq2} and \eqref{eq1} that 
	\begin{align*}
		(\Lambda f,f)_X&\ge C_0\lam\|f\|^2_{L^2_xL^2_D}+q\|\<v\>^{-\frac{1}{2}}|v|Wf\|^2_{L^2_{x,v}}-C\|f\|^2_{L^2_xL^2_D}\\
		&\ge \frac{C_0\lam}{2}\|f\|^2_{L^2_xL^2_D}+q\|\<v\>^{-\frac{1}{2}}|v|Wf\|^2_{L^2_{x,v}}\\
		&\ge c_0\|\<v\>^{1/2}f\|_X^2,
	\end{align*}
	for a small constant $c_0>0$. Note that $\|\nu f\|_{L^2_xL^2_v}\lesssim \|f\|_{L^2_xL^2_D}$ and $\<v\>^{\frac{1}{2}}\lesssim \nu + \<v\>^{-\frac{1}{2}}|v|$. 
%
%
	This then completes the proof of Lemma \ref{lem.coxv}.
\end{proof}

In the following, we denote $\Lambda=v\cdot\nabla_x-A$ is the sense of distribution and denote its domain $ D(\Lambda) = \{f\in X:f(t,x,v) = 0\ \text{ on } \gamma_-,\ \Lambda f\in X\}$. Then $\Lambda$ is closed, cf.~\cite{MKM}.
Indeed, for any sequence $f_n\in  D(\Lambda)$ such that $f_n\to f$ and $\Lambda f_n\to h$ for some $f,g\in X$, we have that for any test function $\varphi\in \mathcal{C}^\infty_c(\Omega\times\R^3_v)$, 
\begin{align*}
	(h,\varphi)_{L^2_{x,v}} =\lim_{n\to\infty}(\Lambda f_n,\varphi)_{L^2_{x,v}}
	=\lim_{n\to\infty}( f_n,\Lambda'\varphi)_{L^2_{x,v}}
	=( f,\Lambda'\varphi)_{L^2_{x,v}}=(\Lambda f,\varphi)_{L^2_{x,v}}.
\end{align*}
Thus $\Lambda f=g\in X$. Moreover, in term of weak formulation, it holds that $f=0$ on $\gamma_-$, and hence, $f\in  D(\Lambda)$. Then we can analyze the spectrum structure of operator $\Lambda$ on $D(\Lambda)$. The idea of proof for the following Theorem comes from \cite{Alonso2020}. 

\begin{Thm}\label{thm.spex}Let $q>0$ in \eqref{W}. For both Landau case and Boltzmann case, the spectrum of $(-\Lambda,D(\Lambda))$, as an operator on $(X,\|\cdot\|_X)$ defined in \eqref{X1} and \eqref{X} with the definite domain $D(\Lambda)=D(\L)$ given in \eqref{def.ddl}, lies in $\{z\in\C: \Re\, z\le -c_0\}$, 
that is 
	\begin{align*}
		\sigma(-\Lambda)\subset \{z\in\C: \Re\, z\le -c_0\}. 
	\end{align*}
\end{Thm}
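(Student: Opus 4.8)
The plan is to show that $\lambda + \Lambda$ is boundedly invertible on $(X,\|\cdot\|_X)$ for every $\lambda\in\C$ with $\Re\,\lambda > c_0$, which is exactly the statement $\sigma(-\Lambda)\subset\{\Re\,z\le -c_0\}$. The natural route is the Lumer--Phillips / Hille--Yosida circle of ideas: first establish the dissipativity estimate coming from Lemma \ref{lem.coxv}, then solve the resolvent equation by a standard characteristics-plus-duality argument using the zero inflow boundary condition.

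First I would record the accretivity estimate. For sufficiently smooth $f\in D(\Lambda)$, taking the real part of $(\lambda f + \Lambda f, f)_X$ and invoking Lemma \ref{lem.coxv} gives
\begin{align*}
\Re\,(\lambda f+\Lambda f,f)_X \ge (\Re\,\lambda)\|f\|_X^2 + (\Lambda f,f)_X \ge (\Re\,\lambda)\|f\|_X^2 + c_0\|\<v\>^{1/2}f\|_X^2 \ge (\Re\,\lambda + c_0)\|f\|_X^2.
\end{align*}
Hence $\|(\lambda+\Lambda)f\|_X \ge (\Re\,\lambda + c_0)\|f\|_X$, so $\lambda+\Lambda$ is injective with bounded inverse on its range whenever $\Re\,\lambda > -c_0$; in particular one gets the resolvent bound $\|(\lambda+\Lambda)^{-1}\|\le (\Re\,\lambda+c_0)^{-1}$ once surjectivity is known. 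A small technical point is to justify the integration by parts defining $(\Lambda f,f)_X$ and the boundary term sign for general $f\in D(\Lambda)$ (not just smooth ones) — this is handled by a density/approximation argument in the graph norm of $\Lambda$, using that $\Lambda$ is closed (as shown right before the theorem) and that the trace on $\gamma_-$ makes sense in the weak formulation, together with the fact that the outgoing boundary integral $\int_{\gamma_+} v\cdot n\,|f|^2$ is nonnegative and can simply be dropped.

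Next I would prove surjectivity of $\lambda+\Lambda$ for $\Re\,\lambda > -c_0$, i.e. solvability of $\lambda f + v\cdot\nabla_x f + Af = h$ with $f|_{\gamma_-}=0$ for any $h\in X$. Since $A = \nu(v)$ (or its Landau/non-cutoff analogue) is a nonnegative multiplication-type operator with the coercivity \eqref{16}, the equation can be solved explicitly along characteristics $X(s)=x-(t-s)v$: integrating from the backward exit point $x_\b(x,v)$ on $\pa\Omega$, where $f=0$, yields the Duhamel formula $f(x,v)=\int_0^{t_\b(x,v)} e^{-(\lambda+A(v))\tau}h(x-\tau v,v)\,d\tau$ (interpreting this in the appropriate generalized sense for the integro-differential $A$ in the Landau/non-cutoff cases, where one instead solves the equation by a fixed-point/Lax--Milgram argument on the bilinear form $\Re\,\lambda(f,f)_X + (\Lambda f,f)_X$ which is coercive by the estimate above). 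Because $\Re(\lambda + A(v)) \ge \Re\,\lambda + c_0 \cdot 0 = \Re\,\lambda$ (and $A\ge 0$), this integral converges and defines an $X$-function; one checks $f\in D(\Lambda)$ and that it solves the equation. Equivalently, and more cleanly, combine the a priori estimate from the previous paragraph with the same estimate for the formal adjoint $\Lambda'$ on its natural domain (with the roles of $\gamma_\pm$ swapped), so that both $\lambda+\Lambda$ and its adjoint are injective with closed range, forcing $\lambda+\Lambda$ to be surjective.

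The main obstacle I expect is the surjectivity step in the Landau and non-cutoff Boltzmann cases, where $A$ is an unbounded integro-differential operator rather than a simple multiplier, so the explicit characteristic formula is unavailable and one must instead construct the resolvent abstractly — e.g. by a Galerkin/variational scheme on the form $\mathcal{B}_\lambda(f,g) := \lambda(f,g)_X + (\Lambda f,g)_X$, verifying coercivity (from the displayed estimate, which gives control of $\|f\|_X^2$ plus the dissipation norm $\|f\|_{L^2_xL^2_D}^2$) and continuity, then passing to the limit and recovering the boundary condition and the membership $f\in D(\Lambda)$ from the energy bound. The other delicate point, as noted, is the rigorous treatment of traces and the Green's identity on the merely Lipschitz (or smooth) domain $\Omega$ for functions in $D(\Lambda)$; this is standard for transport operators (see the reference \cite{MKM} cited for closedness) but needs to be invoked carefully so that the boundary terms in \eqref{eq2}--\eqref{eq1} are legitimate.
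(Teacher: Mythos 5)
Your overall skeleton (the dissipativity estimate from Lemma \ref{lem.coxv}, density of the domain, and then reducing the theorem to solvability of the resolvent equation via the Lumer--Phillips/Engel--Nagel range condition) is exactly the paper's, and for the cutoff case, where $A=\nu(v)$ is a multiplier, your Duhamel-along-characteristics formula is a legitimate shortcut (modulo checking that the exponential factor is uniformly bounded -- for large $|v|$ because $t_\b\lesssim 1/|v|$, for small $|v|$ because $\nu$ is bounded below -- and that the resulting map is $L^2$-bounded). The genuine gap sits in the step you yourself flag as the main obstacle: surjectivity of $\lambda+\Lambda$ in the Landau and non-cutoff cases. Lax--Milgram applied to $\mathcal{B}_\lambda(f,\varphi)=\lambda(f,\varphi)_X+(\Lambda f,\varphi)_X$ does not work as stated: by \eqref{10} the form is coercive only with respect to $\|f\|_X^2+\|f\|^2_{L^2_xL^2_D}$, but it is \emph{not} continuous on that space, because $(v\cdot\na_x f,\varphi)_X$ is not controlled by these norms (the dissipation norm contains no $x$-derivatives). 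Coercivity and boundedness must hold on the same Hilbert space, so neither Lax--Milgram nor a naive Galerkin limit goes through as written. The ``adjoint injective with closed range'' alternative is likewise only a gesture: the Hilbert-space adjoint of $(\Lambda,D(\Lambda))$ in $(X,(\cdot,\cdot)_X)$ is not simply backward transport with $\gamma_\pm$ swapped (the $W$-part of the inner product produces the extra multiplier $q|v|^2\<v\>^{-1}$ and weighted commutators of $A$), and proving dissipativity on the abstractly defined adjoint domain requires precisely the trace/Green identity theory you are trying to bypass.

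What the paper does instead, and what is missing from your proposal, is the regularization that makes the variational argument legitimate: it first realifies $\lambda$ into the $2\times2$ system \eqref{21} (so the $\lambda_I$ part is antisymmetric and drops out of the energy), then adds the elliptic operator $\epsilon\mathcal{L}_P=-\na_{x,v}\cdot\<v\>^{\gamma+4}\na_{x,v}+\<v\>^{\gamma+4}$, with second-order terms in \emph{both} $x$ and $v$, and works on the weighted space $\mathcal{H}\subset H^1_{x,v}$ of functions vanishing on $\gamma_-$. On $\mathcal{H}$ the form $B^\epsilon$ of \eqref{23} is both bounded (the weight $\<v\>^{\gamma+4}$, $\gamma\ge-3$, absorbs the factor $v$ in the transport term) and coercive by \eqref{213}, so Lax--Milgram applies for each $\epsilon>0$; the $\epsilon$-uniform bound inherited from \eqref{10} then allows passage to the weak limit $\epsilon\to0$, giving a distributional solution of \eqref{21} that satisfies \eqref{25}, with uniqueness from the same estimate, and Engel--Nagel Proposition II.3.14 concludes. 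If you wish to avoid regularization, you would need something like Lions' variant of Lax--Milgram, in which the transport derivative is moved onto a smooth test function (continuity being required only in the test-function slot), together with a separate argument recovering the inflow trace of the limit; as written, your proposal lacks this device, and supplying it is the essential content of the paper's proof.
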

\begin{proof}
	Firstly, $c_0-\Lambda$ is dissipative on $X$ by \eqref{10}. Secondly, the domain $ D(\Lambda)$ is dense in $X$, since it contains $\{f\in X:f\in \mathcal{C}_c^\infty(\Omega\times\R^3)\}$ functions. By Proposition II.3.14 in \cite{Engel1999}, it suffices to prove the existence and uniqueness of the problem 
	\begin{align*}
		(\lambda + \Lambda)f = g ,\quad f=0\text{ on }\gamma_-, 
	\end{align*}
for $g\in X$ and $\lambda\in\C$ with $\Re\,\lam>-c_0$. 
	Write $f=f_R+if_I$, $g=g_R+ig_I$ and $\lambda= \lambda_R+i\lambda_I$, then the above problem is equivalent to the real-valued problems:
	\begin{align}\label{21}
		\left(\big(\Lambda+\lambda_R\big)\1_{2\times2}+\lambda_I\begin{bmatrix}
			0 & -1\\1& 0
		\end{bmatrix}\right)\begin{bmatrix}
			f_R\\f_I
		\end{bmatrix}=\begin{bmatrix}
			g_R\\g_I
		\end{bmatrix},
	\end{align}
with given $g$. 
	To solve this problem, we consider its perturbation form:
	\begin{align}\label{22}
		\left(\big(\epsilon\mathcal{L}_P+\Lambda+\lambda_R\big)\1_{2\times2}+\lambda_I\begin{bmatrix}
			0 & -1\\1& 0
		\end{bmatrix}\right)\begin{bmatrix}
			f_R\\f_I
		\end{bmatrix}=\begin{bmatrix}
			g_R\\g_I
		\end{bmatrix},
	\end{align}
	where 
	\begin{align*}
		\mathcal{L}_P := -\nabla_{x,v}\cdot\<v\>^{\gamma+4}\nabla_{x,v}+\<v\>^{\gamma+4}.
	\end{align*}
In order to eliminate the boundary effect, we consider the following weak form to equation \eqref{22} on $X$. That is, for any 
	$\varphi=\begin{bmatrix}
		\varphi_R\\\varphi_I
	\end{bmatrix}$, 
	we write  
	\begin{align}	
		&(W\Lambda f,W\varphi)_{L^2_{x,v}}+C_0(\Lambda f,\varphi)_{L^2_{x,v}}
		\notag+\epsilon\Big((W^2\<v\>^{\gamma+4}\nabla_{x,v}f,\nabla_{x,v}\varphi)_{L^2_{x,v}}
		\\&\qquad+C_0(\<v\>^{\gamma+4}\nabla_{x,v}f,\nabla_{x,v}\varphi)_{L^2_{x,v}}+(W^2\<v\>^{\gamma+4}f,\varphi)_{L^2_{x,v}}+C_0(\<v\>^{\gamma+4}f,\varphi)_{L^2_{x,v}}\Big)\notag\\
		&\quad
		+\lambda_R(f,\varphi)_{X}+\lambda_I\Big(\begin{bmatrix}
			0 & -1\\1& 0
		\end{bmatrix}f,\varphi\Big)_{X} = (g,\varphi)_{X}.
		\label{23}
	\end{align}
	Then we introduce the bilinear form $B^\epsilon[f,\varphi]:\mathcal{H}\times\mathcal{H}\to \R$, which is defined by the left-hand side of \eqref{23} with 
	\begin{multline*}
		\mathcal{H} := \Big\{g=(g_R,g_I)\in\Big(H^1_{x,v}(\<v\>^{\frac{\gamma+4}{2}})\cap L^2_{x,v}(\<v\>^{\frac{\gamma+4}{2}})\Big)\times\Big(H^1_{x,v}(\<v\>^{\frac{\gamma+4}{2}})\cap L^2_{x,v}(\<v\>^{\frac{\gamma+4}{2}})\Big) :\\
		 g_R(t,x,v)=g_I(t,x,v)=0 \text{ on }\gamma_-\Big\}.
	\end{multline*}
Here  the weighted space $H^1_{x,v}(\<v\>^{\frac{\gamma+4}{2}})$ and $L^2_{x,v}(\<v\>^{\frac{\gamma+4}{2}})$ contain functions that are finite with respect to norms  
	\begin{align*}
		\sum_{|\alpha|+|\beta|\le 1}\|\<v\>^{\frac{\gamma+4}{2}}\partial^{\alpha}_\beta g\|_{L^2_{x,v}(\Omega\times \R^3)}\quad\text{ and }\quad\sum_{|\alpha|+|\beta|\le 1}\|\<v\>^{\frac{\gamma+4}{2}}g\|_{L^2_{x,v}(\Omega\times \R^3)}, 
	\end{align*}
	respectively. 
	Thanks to \eqref{10}, by choosing $C_0>0$ large enough, for $f\in \mathcal{H}$, we have 
	\begin{align}\label{213}\notag
		B^\epsilon[f,f]&\ge (c_0+\lambda_R)\Big(\frac{C_0}{2}\|f\|^2_{L^2_xL^2_{D}} + q\|\<v\>^{-\frac{1}{2}}|v|Wf\|^2_{L^2_{x,v}}\Big) \\
		&\qquad+ \epsilon\|\<v\>^{\frac{\gamma+4}{2}}\nabla_vf\|^2_{X\times X} + \epsilon\|\<v\>^{\frac{\gamma+4}{2}}\nabla_xf\|^2_{X\times X} + \epsilon\|\<v\>^{\frac{\gamma+4}{2}}f\|_{X\times X}^2. 
	\end{align}
	Note that  the anti-symmetric term related to $\lambda_I$ vanishes. 
%
	On the other hand, using \eqref{27a}, \eqref{27b} and \eqref{defnu}, we have  $|(Af,f)_{L^2_v}|\lesssim |f|_{L^2_D}^2\lesssim |\<v\>^{\frac{\gamma+2}{2}}\<D_v\>f|_{L^2_v}^2$. Then it's direct to check that 
	\begin{align*}
		|B^\epsilon[g,f]|\le (C+|\lambda|+\epsilon)\|g\|_{\mathcal{H}}\|f\|_{\mathcal{H}}.
	\end{align*}
	Therefore, whenever $c_0+\lambda_R>0$, by Lax-Milgram Theorem, for any $g$ in the dual of $\mathcal{H}$ (and hence for any $g\in X\times X$), there exists a unique $f^\epsilon\in \mathcal{H}$ such that 
	\begin{align*}
		B^\epsilon[f^\epsilon,\varphi] = (g,\varphi)_X, \quad \forall\, \varphi\in\mathcal{H},\ \forall\,\epsilon>0.
	\end{align*}
	This gives the existence and uniqueness for problem \eqref{23} whenever $c_0+\lambda_R>0$. 
	Using \eqref{213}, the weak solution $f^\epsilon$ satisfies 
	\begin{align*}
		 (c_0+\lambda_R)\Big(\frac{C_0}{2}\|f^\epsilon\|^2_{L^2_xL^2_{D}} + q\|\<v\>^{-\frac{1}{2}}|v|Wf^\epsilon\|^2_{L^2_{x,v}}\Big)\le B^\epsilon[f^\epsilon,f^\epsilon]= (g,f^\epsilon)_X\le \|g\|_{X\times X}\|f^\epsilon\|_{X\times X}.
	\end{align*}
	Thus, noticing $\|\cdot\|_X\approx \|\cdot\|_{L^2_{x,v}}\lesssim \|\cdot\|_{L^2_xL^2_{D}} + q\|\<v\>^{-\frac{1}{2}}|v|W(\cdot)\|_{L^2_{x,v}}$, we have 
	\begin{align*}
		  \frac{C_0}{2}\|f^\epsilon\|_{L^2_xL^2_{D}} + q\|\<v\>^{-\frac{1}{2}}|v|Wf^\epsilon\|_{L^2_{x,v}}\lesssim (c_0+\lambda_R)^{-1}\|g\|_{X\times X}.
	\end{align*}
	Thus for fixed $g\in X\times X$, $\|f^\epsilon\|_{L^2_xL^2_{D}}$ and $\|\<v\>^{-\frac{1}{2}}|v|Wf^\epsilon\|_{L^2_{x,v}}$ are bounded  as $\epsilon\to 0$. We let $f_g\in X\times X$ to be the weak limit of $\{f^\epsilon\}$ as $\epsilon\to 0$. Then $f_g$ satisfies problem \eqref{21} in the sense of distribution with estimate 
	\begin{align}\label{25}
	\|f_g\|_{X\times X}\lesssim \frac{C_0}{2}\|f_g\|_{L^2_xL^2_{D}} + q\|\<v\>^{-\frac{1}{2}}|v|Wf_g\|_{L^2_{x,v}}\le (c_0+\lambda_R)^{-1}\|g\|_{X\times X}.
	\end{align}  
	Furthermore, any solution to \eqref{21} in $X\times X$ satisfies estimate \eqref{25}. Thus, the solution is unique in this space. Note that we define $\Lambda$ in the sense of distribution.  
	This proves that $\lambda+\Lambda: D(\Lambda)\to X$ is bijective whenever $\lam_R>-c_0$. 
	By Proposition II.3.14 in \cite{Engel1999}, we know that any $\lambda\in\C$ such that $\lambda_R>-c_0$ belongs to the spectrum of $\Lambda$ in space $X$.  This completes the proof of Theorem \ref{thm.spex}.
\end{proof}


In order to obtain the spatial regularity effect of operator $\Lambda$, we begin with the following extension theorem. Once we could extend the function in $\Omega$ with boundary values to a function in $\R^3_x$, then we can easily apply the technique in the whole space such as the Fourier transform to deduce the regularizing effect of $\Lambda$.

\begin{Lem}\label{Lem23}
	Let $g\in L^2(\Omega\times\R^3_v)$ such that $\Lambda g\in L^2(\Omega\times\R^3_v)$ and $g=0$ on $\gamma_-$. Then there exists an extension operator $E$ such that 
	\begin{align*}
		Eg|_{\Omega\times\R^3_v}=g,\qquad  Eg|_{\pa\Omega\times\R^3_v}=g|_{\pa\Omega\times\R^3_v},
	\end{align*}
and 
\begin{equation*}
	\Lambda Eg = \left\{\begin{aligned}
		&\Lambda g,\quad\text{ on }\Omega\times\R^3_v,\\
		&0,\qquad\text{ on }\Omega^c\times\R^3_v.
	\end{aligned}\right.
\end{equation*}
Moreover, 
\begin{align*}
	\|Eg\|_{L^2(\R^3\times\R^3_v)}+\|\Lambda Eg\|_{L^2(\R^3\times\R^3_v)}\le \|g\|_{L^2(\Omega\times\R^3_v)}+\|\Lambda g\|_{L^2(\Omega\times\R^3_v)}. 
\end{align*}
\end{Lem}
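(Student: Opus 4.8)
The plan is to define $Eg$ by extending $g$ along the backward characteristics that pass through the boundary, taking the value $0$ whenever the characteristic has not yet entered $\Omega$. Concretely, fix $(x,v)$ with $x\notin\overline\Omega$. If the backward ray $s\mapsto x-sv$ ($s>0$) never meets $\overline\Omega$, set $Eg(x,v)=0$. Otherwise, let $s_*(x,v)>0$ be the first time the ray enters $\overline\Omega$ (so $x-s_*v\in\pa\Omega$ with $(x-s_*v)\cdot(-v\text{-direction})$ pointing inward, i.e. the entry point lies in $\ol{\gamma_-}$), and define $Eg(x,v)=g(x-s_*v,v)$ if $(x-s_*v,v)\in\gamma_-$; since $g=0$ on $\gamma_-$ this is just $0$ as well. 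In other words, because the inflow data is zero, the natural extension is simply $Eg=g$ on $\Omega\times\R^3_v$ and $Eg=0$ on $\Omega^c\times\R^3_v$. The first step is to verify this crude extension actually works: that the trace of $g$ from inside $\Omega$ on $\gamma_+$ matches the trace of the zero function from outside (which is trivially zero on $\gamma_-$-type boundary pieces), and that there is no spurious jump creating a singular transport term. The key point is that $A$ is a local multiplication/diffusion-type operator in $v$ (it does not mix $x$-values), so $A(Eg)=\1_\Omega Ag=\1_{\Omega^c}\cdot 0$ pointwise, and the only delicate term is $v\cdot\nabla_x(Eg)$ across $\pa\Omega$.

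The second and main step is the distributional computation of $v\cdot\nabla_x(Eg)$. Writing $Eg=\1_\Omega g$ (identifying $g$ with any measurable representative that is $0$ outside), for a test function $\varphi\in C_c^\infty(\R^3_x\times\R^3_v)$ one computes
\begin{align*}
	\int_{\R^3\times\R^3}\1_\Omega g\,(v\cdot\nabla_x\varphi)\,dvdx
	&=\int_{\Omega\times\R^3}g\,(v\cdot\nabla_x\varphi)\,dvdx\\
	&=-\int_{\Omega\times\R^3}(v\cdot\nabla_x g)\,\varphi\,dvdx
	-\int_{\pa\Omega}\int_{\R^3}(v\cdot n(x))\,g\,\varphi\,dvdS(x).
\end{align*}
On $\gamma_-$ we have $g=0$ by hypothesis, and on $\gamma_+$ the outgoing trace of $\1_\Omega g$ meets the (vanishing) incoming trace of the exterior zero function, so the boundary integral drops entirely — this is exactly where the \emph{zero} inflow condition is essential and where a nonzero $g$ would force a surface-measure term in $\Lambda Eg$. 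Hence $v\cdot\nabla_x(Eg)=\1_\Omega(v\cdot\nabla_x g)$ in $\mathcal D'(\R^3_x\times\R^3_v)$, giving $\Lambda Eg=\1_\Omega\,\Lambda g$, which is in $L^2(\R^3\times\R^3_v)$. The trace identities $Eg|_{\Omega\times\R^3_v}=g$ and $Eg|_{\pa\Omega\times\R^3_v}=g|_{\pa\Omega\times\R^3_v}$ then follow from the standard trace theory for the transport operator (the function $g$ with $\Lambda g\in L^2$ has well-defined traces on $\gamma_\pm$ in the appropriate weighted $L^2$ sense, cf.\ the reference \cite{MKM} already invoked for closedness of $\Lambda$), noting that the outgoing trace of $Eg$ from $\Omega$ agrees with $g|_{\gamma_+}$ and its incoming trace is $g|_{\gamma_-}=0$.

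The norm bound is then immediate from the pointwise identities: $\|Eg\|_{L^2(\R^3\times\R^3_v)}=\|g\|_{L^2(\Omega\times\R^3_v)}$ and $\|\Lambda Eg\|_{L^2(\R^3\times\R^3_v)}=\|\Lambda g\|_{L^2(\Omega\times\R^3_v)}$, so the stated inequality holds (in fact with equality, up to the way $\Lambda$ splits as $v\cdot\nabla_x - A$). I expect the only genuine obstacle to be the rigorous justification that the boundary integral vanishes, i.e.\ making precise that $g$ with $\Lambda g\in L^2$ and $g|_{\gamma_-}=0$ has an outgoing trace in $L^2_{\mathrm{loc}}(|v\cdot n|\,dSdv)$ against which the integration by parts above is legitimate for all test functions; this is handled by a Green's formula for the free-streaming operator on Lipschitz domains, which one can either cite from \cite{MKM} or re-derive by approximating $g$ by smooth functions along characteristics and passing to the limit using the uniform bound on $\|\Lambda g\|_{L^2}$. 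Once this trace/Green's-formula step is in place, the rest is bookkeeping.
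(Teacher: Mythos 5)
There is a genuine gap, and it is located exactly at the step you yourself flagged as the ``only delicate term.'' If you set $Eg=\1_{\Omega}g$ and $Eg=0$ on $\Omega^c$, the distributional identity is
\begin{align*}
v\cdot\nabla_x(\1_\Omega g)\;=\;\1_\Omega\,(v\cdot\nabla_x g)\;-\;(v\cdot n(x))\,g\big|_{\pa\Omega}\,\de_{\pa\Omega},
\end{align*}
where the surface term runs over \emph{all} of $\pa\Omega\times\R^3$. The hypothesis $g=0$ on $\gamma_-$ only kills the portion with $v\cdot n<0$; on $\gamma_+$ the outgoing trace of $g$ is in general nonzero, and the exterior zero function has trace $0$ there, so the jump across $\gamma_+$ does \emph{not} cancel. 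Your sentence ``on $\gamma_+$ the outgoing trace of $\1_\Omega g$ meets the (vanishing) incoming trace of the exterior zero function, so the boundary integral drops entirely'' is exactly backwards: for the zero extension to be jump-free you would need the interior and exterior traces to agree on all of $\pa\Omega$, i.e.\ $g|_{\gamma_+}=0$ as well, which is not assumed. Consequently $\Lambda Eg$ contains a measure supported on $\gamma_+$ and is not in $L^2(\R^3\times\R^3_v)$, so both the claimed identity $\Lambda Eg=\1_\Omega\Lambda g$ and the norm bound fail. The same issue already shows up in the statement you must prove: the lemma requires $Eg|_{\pa\Omega\times\R^3_v}=g|_{\pa\Omega\times\R^3_v}$, and the zero extension violates this on $\gamma_+$ (its exterior trace there is $0$, not $g|_{\gamma_+}$).

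The missing idea is that the extension must \emph{continue} $g$ into $\Omega^c$ with boundary values matching $g$ on $\pa\Omega$, rather than truncating it. This is what the paper does: it first produces a regularized version $f^\ve_1$ of $g$ in $\Omega$ by a Lax--Milgram argument for $(I+\Lambda)f=(I+\Lambda)g$ with zero inflow data (and shows $f_1=g$), extends it in Sobolev sense to $\R^3_x$, then solves an exterior problem of the form $(I+\Lambda)f_3=0$ on $\Omega^c\times\R^3$ with $f_3=f^\ve_1$ on $\pa\Omega^c$, again by Lax--Milgram with an elliptic regularization and a limit $\eta\to0$, and finally glues the two pieces. Because the interior and exterior pieces share the same boundary values (and the outward normals of $\Omega$ and $\Omega^c$ have opposite signs), the boundary contributions cancel in the weak formulation and no surface delta appears; the uniform $L^2$ and dissipation bounds then give the stated estimate. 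If you want to salvage your approach you would have to replace ``extend by zero'' with ``extend by solving an exterior transport/absorption problem with datum $g|_{\gamma_+}$'' (or an equivalent forward-characteristic continuation), which is essentially the paper's construction; as written, the proposal does not prove the lemma.
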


\begin{proof}
	We proceed in three steps. 
	
	\noindent{\bf Step 1.}
	We firstly find a smoothing version of $g$ by considering the problem in $\Omega$:
	\begin{align}\label{211}
		B^\ve_1[f,\varphi]=(g+\Lambda g,\varphi)_{L^2(\Omega\times\R^3)},\quad f=0\text{ on }\gamma_-,
	\end{align} for $\varphi\in \mathcal{H}_1$ and $\ve>0$.
	Here  $B^\ve_1:\mathcal{H}_1\to \R$ and Hilbert space $\H_1$ are given respectively by 
	\begin{align*}
		B^\ve_1[f,\varphi] := &(f+\Lambda f,\varphi)_{L^2_{x,v}(\Omega\times\R^3)}\\
		\notag&+\ve\Big((\<v\>^{\gamma+4}\nabla_{x,v}f,\nabla_{x,v}\varphi)_{L^2_{x,v}(\Omega\times\R^3)}+(\<v\>^{\gamma+4}f,\varphi)_{L^2_{x,v}(\Omega\times\R^3)}\Big),
	\end{align*}
	and 
	\begin{equation*}
		\mathcal{H}_1 := \Big\{f\in\Big(H^1_{x,v}(\Omega\times\R^3_v)(\<v\>^{\frac{\gamma+4}{2}})\cap L^2_{x,v}(\Omega\times\R^3_v)(\<v\>^{\frac{\gamma+4}{2}})\Big):f=0\text{ on }\gamma_-\Big\}.
	\end{equation*}
	Here $H^1_{x,v}(\Omega\times\R^3_v)(\<v\>^{\frac{\gamma+4}{2}})$ and $L^2_{x,v}(\Omega\times\R^3_v)(\<v\>^{\frac{\gamma+4}{2}})$ contains functions that are finite with respect to the following norms 
	\begin{align*}
		\sum_{|\alpha|+|\beta|\le 1}\|\<v\>^{\frac{\gamma+4}{2}}\partial^{\alpha}_\beta g\|_{L^2_{x,v}(\Omega\times \R^3)}\quad\text{ and }\quad \|\<v\>^{\frac{\gamma+4}{2}} g\|_{L^2_{x,v}(\Omega\times \R^3)}, 
	\end{align*}respectively.
Note that since $f\in\H_1$ implies $f=0$ on the boundary $\gamma_-$, we know that $\H_1$ is a Hilbert space, which is a necessary condition for Lax-Milgram Theorem. 
	Thanks to \eqref{eq1}, we have 
	\begin{align}\label{217}\notag
		B^\ve_1[f,f]&\ge \|f\|^2_{L^2(\Omega\times\R^3)}+\int_{\pa\Omega,\,v\cdot n>0}v\cdot n|f|^2\,dS(x)+c_0\|f\|^2_{L^2_x(\Omega)L^2_{D}}\\ &\quad+ \ve\|\<v\>^{\frac{\gamma+4}{2}}\nabla_{x,v}f\|^2_{L^2_{x,v}(\Omega\times \R^3)}+ \ve\|\<v\>^{\frac{\gamma+4}{2}}f\|_{L^2_{x,v}(\Omega\times \R^3)}^2. 
	\end{align}
	On the other hand, it's direct to check that 
	\begin{align*}
		|B^\ve_1[f,\varphi]|\le (C+\ve)\|f\|_{\mathcal{H}_1}\|\varphi\|_{\mathcal{H}_1}.
	\end{align*}
	Therefore, by Lax-Milgram Theorem, for any $(I+\Lambda) g$ in the dual of $\mathcal{H}_1$ (and hence for $(I+\Lambda) g\in L^2(\Omega\times\R^3)$), there exists a unique solution $f^\ve_1\in \mathcal{H}_1$ to problem \eqref{211} satisfying
	\begin{align}\label{217c}
		B^\ve_1[f^\ve_1,\varphi]= (g+\Lambda g,\varphi)_{L^2_{x,v}(\Omega\times\R^3)}, \quad f^\ve_1=0\text{ on }\gamma_-, 
	\end{align}
for any $\varphi\in\H_1$
and,
	\begin{align*}
		B^\ve_1[f^\ve_1,f^\ve_1]= (g+\Lambda g,f^\ve_1)_{L^2_{x,v}(\Omega\times\R^3)}
		\le \|(I+\Lambda) g\|_{L^2(\Omega\times\R^3)}\|f^\ve_1\|_{L^2(\Omega\times\R^3)}. 
	\end{align*}
	Together with \eqref{217}, we have 
	\begin{multline}\label{218}
		\|f^\ve_1\|^2_{L^2(\Omega\times\R^3)}+\int_{\pa\Omega,\,v\cdot n>0}v\cdot n|f^\ve_1|^2\,dS(x)+c_0\|f^\ve_1\|^2_{L^2_x(\Omega)L^2_{D}} \\+ \ve\|\<v\>^{\frac{\gamma+4}{2}}\nabla_{x,v}f^\ve_1\|^2_{L^2_{x,v}(\Omega\times \R^3)}+ \ve\|\<v\>^{\frac{\gamma+4}{2}}f^\ve_1\|_{L^2_{x,v}(\Omega\times \R^3)}^2\lesssim \|(I+\Lambda) g\|^2_{L^2_{x,v}(\Omega\times \R^3)}.
	\end{multline}
Then $\{f^\ve_1\}$ is bounded in $L^2(\Omega\times\R^3)$ and $L^2_x(\Omega)L^2_D$ and hence has a weak limit $f_1$  as $\ve\to0$ up to a subsequence. Taking limit $\ve\to 0$ in \eqref{217c} and applying \eqref{218}, we have 
\begin{align}\label{217b}
	f_1 + \Lambda f_1 = g+\Lambda g\quad \text{ in }\Omega\times\R^3,\qquad f_1=0\text{ on }\gamma_-.
\end{align}
Taking inner product of \eqref{217b} with $f_1-g$ over $\Omega\times\R^3$, we have 
\begin{align}\label{228}
	\|f_1-g\|^2_{L^2_{x,v}(\Omega\times \R^3)} +\int_{\pa\Omega}\int_{v\cdot n>0}v\cdot n|f_1-g|^2\,dvdS(x) + \|f_1-g\|^2_{L^2_x(\Omega)L^2_{D}} = 0.
\end{align}
This shows that the weak limit $f^\ve_1$ in $\Omega\times\R^3$ is equal to $g$ on $\ol\Omega\times\R^3$ and $f^\ve_1$ is the smooth approximated version of $g$. 
Next, we extend this smoothing approximated function to the whole space. 
	Using extension on Sobolev space for Lipschitz boundary domain (see for instance \cite[Theorem VI.5, pp. 181]{Stein1971}),
	there exists a linear operator $E$ mapping functions in $\Omega\times\R^3$ to functions in $\R^3\times\R^3$ such that $E(f^\ve_1)|_{\Omega\times\R^3}=f^\ve_1$ and 
	\begin{align*}
		\|E(f^\ve_1)\|_{H^1_x(\R^3)}\lesssim \|f^\ve_1\|_{H^1_x(\Omega)}.
	\end{align*}
	In view of this and \eqref{218}, we have 
	\begin{multline}\label{219}
		\|E(f^\ve_1)\|^2_{L^2(\R^3\times\R^3)}+c_0\|E(f^\ve_1)\|^2_{L^2_x(\R^3)L^2_{D}} + \ve\|\<v\>^{\frac{\gamma+4}{2}}\nabla_{x,v}E(f^\ve_1)\|^2_{L^2_{x,v}(\R^3\times \R^3)}\\+ \ve\|\<v\>^{\frac{\gamma+4}{2}}E(f^\ve_1)\|_{L^2_{x,v}(\R^3\times \R^3)}^2\lesssim \|(I+\Lambda) g\|^2_{L^2_{x,v}(\Omega\times\R^3)}.
	\end{multline}
This gives the boundedness of $E(f^\ve_1)$ in the whole space. 

	\medskip
\noindent{\bf Step 2.}
	In the following we extend function $f^\ve_1$ to $\Omega^c$ with respect to operator $\Lambda$. That is, we will solve the equation in $\Omega^c\times\R^3$:
	\begin{align}\label{211a}
		B^\eta_2[f,\varphi]=-(E(f^\ve_1)+\Lambda E(f^\ve_1),\varphi)_{L^2_{x,v}(\Omega^c\times\R^3)},\quad f=0\text{ on }\pa\Omega^c,
	\end{align} for $\eta>0$ and $\varphi\in \mathcal{H}_2$.
	Here $B^\eta_2:\mathcal{H}_2\to \R$ and Hilbert space $\H_2$ are given respectively by 
	\begin{align*}
		B^\eta_2[f,\varphi] :&= (f+\Lambda f,\varphi)_{L^2_{x,v}(\Omega^c\times\R^3)}\\
		\notag&\quad+\eta\Big((\<v\>^{\gamma+4}\nabla_{x,v}f,\nabla_{x,v}\varphi)_{L^2_{x,v}(\Omega^c\times\R^3)}+(\<v\>^{\gamma+4}f,\varphi)_{L^2_{x,v}(\Omega^c\times\R^3)}\Big),
	\end{align*}
	and 
	\begin{equation*}
		\mathcal{H}_2 := \Big\{f\in\Big(H^1_{x,v}(\Omega^c\times\R^3_v)(\<v\>^{\frac{\gamma+4}{2}})\cap L^2_{x,v}(\Omega^c\times\R^3_v)(\<v\>^{\frac{\gamma+4}{2}})\Big):f=0\text{ on }\pa\Omega^c\Big\}.
	\end{equation*}
	Since $f\in\mathcal{H}_2$ vanishes on boundary $\pa\Omega^c$, it's direct to check that 
	\begin{align}\label{222}\notag
		B^\eta_2[f,f]&\ge \|f\|^2_{L^2(\Omega^c\times\R^3)}+c_0\|f\|^2_{L^2_x(\Omega^c)L^2_{D}} \\&\quad+ \eta\|\<v\>^{\frac{\gamma+4}{2}}\nabla_{x,v}f\|^2_{L^2_{x,v}(\Omega^c\times \R^3)}+ \eta\|\<v\>^{\frac{\gamma+4}{2}}f\|_{L^2_{x,v}(\Omega^c\times \R^3)}^2, 
	\end{align}
	and $B^\eta_2[f,f]\le (C+\eta)\|f\|_{\mathcal{H}_2}\|g\|_{\mathcal{H}_2}$. 
	In view of \eqref{e22} and \eqref{219}, 
	we deduce from direct calculation on $\Lambda=v\cdot\na_x + A$ that  
	\begin{align*}
		|(E(f^\ve_1)+\Lambda E(f^\ve_1),\varphi)|
		&\lesssim \big(\|E(f^\ve_1)\|_{L^2_{x,v}(\R^3\times\R^3)}+\|\<v\>\na_xE(f^\ve_1)\|_{L^2_{x,v}(\R^3\times\R^3)}+\|E(f^\ve_1)\|_{L^2_x(\R^3)L^2_{D}}\big)\\
		&\qquad\times\big(\|\varphi\|_{L^2_{x,v}(\Omega^c\times\R^3)} +\|\varphi\|_{L^2_x(\Omega^c)L^2_D} \big)\\
		&\lesssim C_\ve\|(I+\Lambda) g\|_{L^2_{x,v}(\Omega\times\R^3)}\|\varphi\|_{\mathcal{H}_2},
	\end{align*}
for $\varphi\in\mathcal{H}_2$. 
	Therefore, by Lax-Milgram Theorem, there exists a unique solution $f^\eta_2\in\mathcal{H}_2$ to problem \eqref{211a}
	 satisfying 
	 \begin{align}\label{211c}
	 	B^\eta_2[f^\eta_2,\varphi]&=(E(f^\ve_1)+\Lambda E(f^\ve_1),\varphi),\quad f^\eta_2=0\text{ on }\pa\Omega^c,
	 \end{align}for $\varphi\in\mathcal{H}_2$
 and,
	\begin{align*}
		B^\eta_2[f^\eta_2,f^\eta_2]&=(E(f^\ve_1)+\Lambda E(f^\ve_1),f^\eta_2)\\
		&\le C_\ve\|(I+\Lambda) g\|_{L^2_{x,v}(\Omega\times\R^3)}\big(\|f^\eta_2\|_{L^2_{x,v}(\Omega^c\times\R^3)} +\|f^\eta_2\|_{L^2_x(\Omega^c)L^2_D} \big). 
	\end{align*}
Together with \eqref{222}, we have 
\begin{multline*}
	\|f^\eta_2\|^2_{L^2(\Omega^c\times\R^3)}+c_0\|f^\eta_2\|^2_{L^2_x(\Omega^c)L^2_{D}} + \eta\|\<v\>^{\frac{\gamma+4}{2}}\nabla_{x,v}f^\eta_2\|^2_{L^2_{x,v}(\Omega^c\times \R^3)}\\+ \eta\|\<v\>^{\frac{\gamma+4}{2}}f^\eta_2\|_{L^2_{x,v}(\Omega^c\times \R^3)}^2\le C_\ve\|(I+\Lambda) g\|_{L^2_{x,v}(\Omega\times\R^3)}^2. 
\end{multline*}
Then $\{f^\eta_2\}$ is bounded in $L^2(\Omega\times\R^3)$ and $L^2_x(\Omega)L^2_D$ and hence has a weak limit $f_2$  as $\eta\to0$ up to a subsequence. Taking limit $\eta\to 0$ in \eqref{211c}, we have 
\begin{align}\label{223}
(f_2+\Lambda f_2,\varphi)_{L^2_{x,v}(\Omega^c\times\R^3)}=-(E(f^\ve_1)+\Lambda E(f^\ve_1),\varphi)_{L^2_{x,v}(\Omega^c\times\R^3)},\quad f_2=0\text{ on }\pa\Omega^c. 
\end{align}
Denote $f^\ve_3=f_2+E(f^\ve_1)$ in $\Omega^c\times\R^3$. Then \eqref{223} implies that 
\begin{align}\label{225}
	f^\ve_3+\Lambda f^\ve_3=0 \text{ on }\Omega^c\times\R^3,\quad f^\ve_3=f^\ve_1\text{ on }\pa\Omega^c. 
\end{align}
Taking the inner product with $f^\ve_3$, we deduce from \eqref{225} that
\begin{align}\label{214}\notag
	\|f^\ve_3\|^2_{L^2(\Omega^c\times\R^3)}+\|f^\ve_3\|^2_{L^2(\Omega^c)L^2_D}&\le(f^\ve_3+\Lambda f^\ve_3,f^\ve_3)_{L^2(\Omega^c\times\R^3)} + \int_{\pa\Omega}v\cdot n|f^\ve_1|^2\,dS(x)\\
	&\le C\|g+\Lambda g\|^2_{L^2(\Omega\times\R^3)},
\end{align}
where the boundary term is controlled by using \eqref{218}.
Then $f^\ve_3$ is bounded in $L^2(\Omega^c\times\R^3)$ and $L^2(\Omega^c)L^2_D$. There exists a weak limit $f_3$ of $\{f^\ve_3\}$ as $\ve\to0$ up to a subsequence. Taking weak limit $\ve\to 0$ in \eqref{225}, we have 
\begin{align}\label{226}
	f_3+\Lambda f_3=0 \text{ on }\Omega^c\times\R^3, 
\end{align}
and hence
\begin{align}\label{214a}
	\|f_3\|^2_{L^2(\Omega^c\times\R^3)}+\|f_3\|^2_{L^2(\Omega^c)L^2_D}
	&\le C\|g+\Lambda g\|^2_{L^2(\Omega\times\R^3)}.
\end{align}
This function $f_3$ gives the extension of $g$ on $\Omega^c\times\R^3$. 

\medskip
\noindent{\bf Step 3.}
Now we denote 
\begin{equation*}
	f^\ve = \left\{\begin{aligned}
		&f^\ve_1,\text{ on } \Omega\times\R^3,\\
		&f^\ve_3,\text{ on } \Omega^c\times\R^3.
	\end{aligned}\right.
\end{equation*}
By \eqref{218} and \eqref{214}, we know that $f^\ve$ is uniformly bounded in $L^2(\R^3\times\R^3)$ and $L^2(\R^3)L^2_D$ with respect to $\ve$. Denote $f$ to be the weak limit of $\{f^\ve\}$ (up to subsequence). Since the weak limit is unique, we have from \eqref{217b} and \eqref{226} that 
\begin{equation*}
	f = \left\{\begin{aligned}
		&f_1,\text{ on } \Omega\times\R^3,\\
		&f_3,\text{ on } \Omega^c\times\R^3,
	\end{aligned}\right.
\end{equation*}
and 
\begin{equation*}
	f+\Lambda f = \left\{\begin{aligned}
		&g+\Lambda g,\text{ on } \Omega\times\R^3,\\
		&0,\qquad\text{ on } \Omega^c\times\R^3. 
	\end{aligned}\right.
\end{equation*}
Note that when we take the weak limit above, the boundary term arsing from operator $v\cdot\na_x$  along $\pa\Omega$ vanishes since $f^\ve_1$ in $\Omega$ and $f^\ve_3$ in $\Omega^c$ have the same boundary values and the normal outward vectors on $\Omega$ and $\Omega^c$ have different signs. In fact, for any smooth test function $\varphi$, we have 
\begin{align*}
	&\quad\,\lim_{\ve\to 0}(f^\ve+\Lambda f^\ve,\varphi)_{L^2(\R^3\times\R^3)}\\ &= \lim_{\ve\to 0}\Big\{(f^\ve_1+v\cdot\na_x f^\ve_1+Af^\ve_1,\varphi)_{L^2(\Omega\times\R^3)} + (f^\ve_3+v\cdot\na_x f^\ve_3+Af^\ve_3,\varphi)_{L^2(\Omega^c\times\R^3)}\Big\}\\
	&= \lim_{\ve\to 0}\Big\{(f^\ve_1,\varphi-v\cdot\na_x\varphi+A^*\varphi)_{L^2(\Omega\times\R^3)} + \int_{\pa\Omega}\int_{\R^3}v\cdot n(x)f^\ve_1\varphi\,dvdS(x) \\&\qquad\qquad+ (f^\ve_3,\varphi-v\cdot\na_x\varphi+A^*\varphi)_{L^2(\Omega^c\times\R^3)}+\int_{\pa\Omega^c}\int_{\R^3}v\cdot n(x)f^\ve_1\varphi\,dvdS(x)\Big\}\\
	&= (f_1,\varphi-v\cdot\na_x\varphi+A^*\varphi)_{L^2(\Omega\times\R^3)}+(f_3,\varphi-v\cdot\na_x\varphi+A^*\varphi)_{L^2(\Omega^c\times\R^3)}\\& = (g+\Lambda g,\varphi)_{L^2(\Omega\times\R^3)}. 
\end{align*}
Write $Eg:=f$ and we conclude Lemma \ref{Lem23} from \eqref{228}, \eqref{226} and \eqref{214a}. 
\end{proof}

Applying the above extension theorem, we can now prove Theorem \ref{Main1}. 
\begin{proof}[Proof of Theorem \ref{Main1}]
	Let $R>0$ be given in Lemma \ref{Lem21}. We proceed it in two steps as follows. Note that we only consider Landau case and non-cutoff Boltzmann case in this theorem. 
	
%


	\medskip
	\noindent{\bf{Step 1.}} In this step we prove that $K$ is $\Lambda$-compact in $X$. Taking a sequence $\{g_n\}\subset D(\Lambda)\subset X$ such that both $\{g_n\}$ and $\{\Lambda g_n\}$ are bounded in $X$, we apply Lemma \ref{Lem23} to extend function $g_n$ to $Eg_n$ over $\R^3\times\R^3$ such that 
	\begin{align}\label{28}
		\|Eg_n\|_{L^2(\R^3\times\R^3_v)}+\|\Lambda Eg_n\|_{L^2(\R^3\times\R^3)}\lesssim\|g_n\|_{L^2(\Omega\times\R^3_v)}+ \|\Lambda g_n\|_{L^2(\Omega\times\R^3)}. 
	\end{align}
	Then it follows from  \eqref{16} that 
	\begin{align}\label{29}
		\|Eg_n\|^2_{L^2_x(\R^3)L^2_D}\lesssim (\Lambda g_n,g_n)_{L^2(\R^3\times\R^3)}\le \|\Lambda Eg_n\|_{L^2(\R^3\times\R^3)}\|Eg_n\|_{L^2(\R^3\times\R^3)}.
	\end{align}
By definition \eqref{18a} and \eqref{18b} for $K$, we know that $KEg_n$ contains the smooth cut-off function $\chi_R$, where $\chi_R$ is given by \eqref{def.chieps}.
Since $\chi_R(v)$ is smooth and has compact support, $\<D_v\>^s\chi_R(v)$ can be regarded as a pseudo-differential operator with symbol in $S(\<v\>^{\frac{\gamma}{2}}\<\eta\>^{s})$, where we let $s=1$ for Landau case. By \cite[Lemma 2.4 and Corollary 2.5]{Deng2020a}, we know that 
\begin{multline}\label{222b}
	\|\<D_v\>^{s}KEg_n\|_{L^2(\R^3\times\R^3)}\lesssim \|\<v\>^{\frac{\gamma}{2}}\<D_v\>^sEg_n\|_{L^2(\R^3\times\R^3)}\\\lesssim\|Eg_n\|_{L^2_x(\R^3)L^2_D}\lesssim \|g_n\|_{L^2(\Omega\times\R^3_v)}+ \|\Lambda g_n\|_{L^2(\Omega\times\R^3)}, 
\end{multline}
where the second inequality follows from \eqref{27a} and \eqref{27b} and the last inequality follows from \eqref{28} and \eqref{29}. This yields the regularizing effect of $\Lambda$ with respect to velocity variable. 


	\smallskip

	Next we will derive the regularity on $x$. Temporarily we denote $f=Eg_n$ and denote Fourier transform $\F f=\int_{\R^3_x}fe^{-ix\cdot k}\,dx$ with respect to $x$. 
It follows from \eqref{17} that 
\begin{align}\label{201}
	|K\wh f|_{L^2_v(\R^3)}\lesssim |\chi_R \wh f|_{L^2_v(\R^3)}. 
\end{align}
Since $f$ is defined on $\R^3_x\times\R^3_v$, we can take Fourier transform to obtain 
\begin{align*}
	\lambda {\chi_R\wh f}+iv\cdot k{\chi_R\wh f} +{\chi_RA\wh f} = \lambda  {\chi_R\wh f}+{\chi_R\Lambda\wh f},
\end{align*}
for any $\lambda >1$. Then 
\begin{align}\label{220a}
	 {\chi_R\wh f}(k,v) = \frac{\lambda  {\chi_R\wh f}+{\chi_R\Lambda\wh f} - {\chi_RA \wh f}}{\lambda +iv\cdot k}.
\end{align}
We introduce a smooth mollifier in velocity for any $\ve\in(0,1)$:
\begin{align*}
	\rho_\ve(v) = \ve^{-3}\rho(\ve^{-1}v),\quad \rho\in C^\infty_c(|v|\le 1;[0,1]),\quad \int_{\R^3}\rho\,dv=1.
\end{align*}
Then 
\begin{align}\label{220}
	\chi_R\wh f = \big(\chi_R\wh f-\rho_\ve*_v\chi_R\wh f\big) + \rho_\ve*_v(\chi_R\wh f),
\end{align}
where $*_v$ is the convolution with respect to $v$. We can estimate the first right-hand term of \eqref{220} as 
\begin{align*}
	|\chi_R\wh f-\rho_\ve*_v\chi_R\wh f|_{L^2_{v}}
	&\le \int_{\R^3}|\big(\chi_R\wh f(v)-\chi_R\wh f(v-u)\big)\rho_\ve(u)|_{L^2_v}\,du\\
	&\le \Big(\int_{\R^3\times\R^3}\frac{|\chi_R\wh f(v)-\chi_R\wh f(v-u)|^2}{|u|^{3+2s}}\,dudv\Big)^{\frac{1}{2}}\Big(\int_{\R^3}|\rho_\ve(u)|^2|u|^{3+2s}\,du\Big)^{\frac{1}{2}}\\
	&\le C\ve^s|\<D_v\>^s\chi_R\wh f|_{L^2_v(\R^3)}.
\end{align*}
Similar to \eqref{222b}, we can regard $\<D_v\>^s\chi_R(v)$ as a pseudo-differential operator with symbol in $S(\<v\>^{\frac{\gamma}{2}}\<\eta\>^{s})$. By \cite[Lemma 2.4 and Corollary 2.5]{Deng2020a}, we know that 
\begin{align}\notag\label{221}
	|\chi_R\wh f-\rho_\ve*_v\chi_R\wh f|_{L^2_{v}}&\le C\ve^s|\<D_v\>^s\chi_R\wh f|_{L^2_v(\R^3)}\\
	&\le C\ve^s|\<v\>^{\frac{\gamma}{2}}\<D_v\>^{s}\wh{f}|_{L^2_v}\le C\ve^s|\wh{f}|_{L^2_D}. 
\end{align}
Here the last inequality follows from \eqref{27a} and \eqref{27b}. 
For the second part in \eqref{220}, it follows from \eqref{220a} that 
\begin{align*}
	|\rho_\ve*_v(\chi_R\wh f)| = \Big|\int_{\R^3}\frac{\lambda \chi_R\wh {f}(u)+\chi_R{\Lambda \wh f}(u) - \chi_R{A\wh f}(u)}{\lambda +iu\cdot k}\rho_\ve(v-u)\,du\Big|
	\le \sum_{i=1}^2I_i,
\end{align*}
where 
\begin{align*}
	I_1 &= \Big|\int_{\R^3}\frac{\lambda \chi_R(u)\wh {f}(u)+\chi_R(u){\Lambda\wh f}(u)}{\lambda +iu\cdot k}\rho_\ve(v-u)\,du\Big|,\\
	I_2 &= \Big|\int_{\R^3}\frac{-\chi_R(u){A\wh f}(u)}{\lambda +iu\cdot k}\rho_\ve(v-u)\,du\Big|.
\end{align*}
Then by H\"{o}lder's inequality, 
\begin{align}\label{216}
	I_1\le \Big(\int_{\R^3}\chi_R(u)|\lambda \wh {f}(u)+\wh{\Lambda f}(u)|^2|\rho_\ve(v-u)|\,du\Big)^{1/2}\Big(\int_{\R^3}\frac{\chi_R(u)|\rho_\ve(v-u)|}{\lambda^2 +|u\cdot k|^2}\,du\Big)^{1/2}.
\end{align}
For the second factor, we apply the method in \cite[Proposition 1.1]{Bouchut2002}. Write $u=\wt u\frac{k}{|k|}+u'$ with $\wt u=\frac{u\cdot k}{|k|}$ and $u'=u-\wt u\frac{k}{|k|}$. Then $k\perp u'$ and 
\begin{align}\label{216a}
	\int_{\R^3}\frac{\chi_R(u)|\rho_\ve(v-u)|}{\lambda^2 +|u\cdot k|^2}\,du
	&\le \frac{C}{\lambda^2}\int_{\R}\frac{\frac{1}{\ve}\1_{|\wt u|\le 2R}\1_{|\frac{v\cdot k}{|k|}-\wt u|<\ve}}{1+|\wt u|^2 |k|^2/\lambda^2}\,d\wt u
	\le \frac{C}{\lambda|k|}. 
\end{align}
Here $\1_{\{\cdot\}}$ is the indicator function on a set $\{\cdot\}$. For the term $I_2$, we have from \eqref{16a}, \eqref{27a} and \eqref{27b} that 
\begin{align}\label{216c}
	I_2\le C|\wh{f}|_{L^2_D}\Big|\frac{-\chi_R(u)}{\lambda +iu\cdot k}\rho_\ve(v-u)\Big|_{L^2_D(\R^3_u)}
	&\le C|\wh{f}|_{L^2_D}\Big|\frac{\<u\>^{\frac{\gamma}{2}}\chi_R(u)}{\lambda +iu\cdot k}\rho_\ve(v-u)\Big|_{H^1(\R^3_u)}.
\end{align}
Here we apply $|\<v\>^{\frac{\gamma}{2}}\<D_v\>^s(\cdot)|_{L^2_v}\approx |\<D_v\>^s\<v\>^{\frac{\gamma}{2}}(\cdot)|_{L^2_v}\lesssim |\<D_v\>\<v\>^{\frac{\gamma}{2}}(\cdot)|_{L^2_v}$ (see for instance \cite[Corollary 2.5]{Deng2020a}).
For the second factor in \eqref{216c}, it's direct to obtain that 
\begin{multline}\label{216d}
	\Big|\frac{\<u\>^{\frac{\gamma}{2}}\chi_R(u)}{\lambda +iu\cdot k}\rho_\ve(v-u)\Big|_{H^1(\R^3_u)}^2
	\le C_R\Big(\int_{\R^3}\frac{|\chi_R(u)\rho_\ve(v-u)|^2}{\lambda^2 +|u\cdot k|^2}\,du
	+\int_{\R^3}\frac{|\na_u\chi_R(u)\rho_\ve(v-u)|^2}{\lambda^2 +|u\cdot k|^2}\,du\\
	\qquad+\ve^{-2}\int_{\R^3}\frac{|\chi_R(u)\na_u\rho_\ve(v-u)|^2}{\lambda^2 +|u\cdot k|^2}\,du
	+|k|^2\int_{\R^3}\frac{|\chi_R(u)\rho_\ve(v-u)|^2}{(\lambda^2 +|u\cdot k|^2)^2}\,du\Big). 
\end{multline}
As in \cite{Ukai}, for any $\rho>0$, we set 
\begin{align*}
	\Sigma_1 &= \{u\in\R^3:|u|\le 2R,\ |u\cdot k|\le \rho|k|\},\\
	\Sigma_2 &= \{u\in\R^3:|u|\le 2R\} - \Sigma_1.
\end{align*}
Then their Lebesgue measures satisfy 
\begin{align*}
	|\Sigma_1|\le C\rho R^2, \quad |\Sigma_2|\le C R^3. 
\end{align*}
Splitting the integration region into $\Sigma_1$ and $\Sigma_2$ and choosing $\rho = \lambda^{\frac{2}{3}}|k|^{-\frac{2}{3}}$, we have 
\begin{align*}
	\int_{|u|\le 2R}\frac{1}{\lambda^2 +|u\cdot k|^2}\,du\le C_R\big(\lambda^{-2}\rho+(\rho|k|)^{-2}\big)\le C_R\lambda^{-\frac{4}{3}}|k|^{-\frac{2}{3}}.
\end{align*}
Similarly, choosing $\rho=\lambda^{4/5}|k|^{-4/5}$, we have 
\begin{align*}
	\int_{|u|\le 2R}\frac{1}{(\lambda^2 +|u\cdot k|^2)^2}\,du\le C_R(\lambda^{-4}\rho + (\rho|k|)^4)\le C_R\lambda^{-\frac{16}{5}}|k|^{-4/5}. 
\end{align*}
Combining the above two estimates, we take integration of \eqref{216d} with respect to $v$ to obtain 
\begin{align}\label{216e}
	\Big|\frac{\<u\>^{\frac{\gamma}{2}}\chi_R(u)}{\lambda +iu\cdot k}\rho_\ve(v-u)\Big|_{H^1(\R^3_u)L^2_v}^2
	&\le C_R\big((\ve^{-3}+\ve^{-5})\lambda^{-\frac{4}{3}}|k|^{-\frac{2}{3}} + \ve^{-3}\lambda^{-\frac{16}{5}}|k|^{\frac{6}{5}}\big).
\end{align}
Collecting \eqref{216}, \eqref{216a}, \eqref{216c} and \eqref{216e}, we have 
\begin{align}\label{221a}
	|\rho_\ve*_v(\chi_R\wh f)|_{L^2_v}&\le 
	\frac{C\lambda^{\frac{1}{2}}}{|k|^{\frac{1}{2}}}| \wh{f}(u)|_{L^2_u}+\frac{C}{\lambda^{\frac{1}{2}}|k|^{\frac{1}{2}}}|\wh{\Lambda f}(u)|_{L^2_u}+
	C_R\big(\ve^{-5}\lambda^{-\frac{4}{3}}|k|^{-\frac{2}{3}} + \ve^{-3}\lambda^{-\frac{16}{5}}|k|^{\frac{6}{5}}\big)|\wh f|_{L^2_D}. 
\end{align}
Now we assume $|k|\ge 1$ and choose $\ve=|k|^{-\frac{56}{195}}$, $\lambda=|k|^{\frac{9}{13}}$. Then \eqref{221} and \eqref{221a} yield 
\begin{align}\label{221b}
	|\chi_R\wh f(k)|_{L^2_v}\le C_R|k|^{-\frac{56}{195}s}|\wh f|_{L^2_D} + C_R|k|^{-\frac{2}{13}}\big(|\wh{f}|_{L^2_v}+|\wh{\Lambda f}|_{L^2_v}+|\wh f|_{L^2_D}\big).
\end{align}
Let $s_1=\min\{\frac{56}{195}s,\frac{2}{13}\}$. Applying \eqref{221b} for the parts $|k|>1$ and using Plancherel's Theorem, one can obtain 
\begin{align*}
	\Big(\int_{\R^3}\<k\>^{2s_1}|\chi_R\wh f(k)|^2_{L^2_v}\,dk\Big)^{\frac{1}{2}}
	&\lesssim \Big(\int_{\R^3}\<k\>^{s_1}\1_{|k|\le 1}|\chi_R\wh f(k)|_{L^2_v}\,dk\Big)^{\frac{1}{2}}+\Big(\int_{\R^3}\<k\>^{s_1}\1_{|k|> 1}|\chi_R\wh f(k)|_{L^2_v}\,dk\Big)^{\frac{1}{2}}\\
	&\lesssim \|{f}\|_{L^2_xL^2_v}+\|{\Lambda f}\|_{L^2_xL^2_v}+\|f\|_{L^2_xL^2_D}. 
\end{align*}
Recall that $f=Eg_n$. Together with \eqref{201}, \eqref{28} and \eqref{29}, we have  
\begin{align}\label{222a}
	\|\<D_x\>^{s_1}KEg_n\|_{L^2(\R^3\times\R^3)}\lesssim \|g_n\|_{L^2(\Omega\times\R^3_v)}+ \|\Lambda g_n\|_{L^2(\Omega\times\R^3)}. 
\end{align}
This gives the regularizing effect of $\Lambda$ with respect to spatial variable. 
In view of \eqref{222b} and \eqref{222a}, noticing that $K$ is an operator acted on velocity only and supported on $|v|\le 2R$ while $E$ is the extension operator on $g_n$, we have 
\begin{align*}
	\|Kg_n\|_{H^{\frac{s_1}{2}}(\Omega)H^{\frac{s}{2}}(|v|\le 2R)}&\le \|\<D_x\>^{\frac{s_1}{2}}\<D_v\>^{\frac{s}{2}}KEg_n\|_{L^2(\R^3\times\R^3)}\\
	&\le \|\<D_x\>^{s_1}KEg_n\|^{1/2}_{L^2(\R^3\times\R^3)}\|\<D_v\>^{s}KEg_n\|^{1/2}_{L^2(\R^3\times\R^3)}\\
	&\lesssim \|g_n\|_{L^2(\Omega\times\R^3_v)}+ \|\Lambda g_n\|_{L^2(\Omega\times\R^3)}. 
\end{align*}
By this estimate, since $\{g_n\}$ and $\{\Lambda g_n\}$ are bounded in $L^2(\Omega\times\R^3)$, it follows by Sobolev embedding that $\{Kg_n\}$ contains a convergent subsequence. This implies that $K$ is $\Lambda$-compact.

\smallskip
	Using \cite[Corollary XVII.4.4]{Gohberg1990} for stability of essential spectrum under relatively compact perturbations, we proved that $\sigma_{ess}(\mathcal{L})\subset \sigma_{ess}(\Lambda)\subset \{z\in\C:\Re z\le -c_0\}$, where the later inclusion follows from Lemma \ref{thm.spex}. Then by \cite[Theorem XVII.2.1]{Gohberg1990}, we know that $\sigma(\mathcal{L})\cap \{\Re z>-c_0\}$ contains only discrete eigenvalues of finite type.
	On the other hand, similar to \eqref{eq1}, $\mathcal{L}$ is non-positive, i.e. $(\mathcal{L}f,f)_{L^2_{x,v}}\le 0$. Then the discrete spectrum of $\mathcal{L}$ lies in $(-\infty,0]$. This proves that $\sigma(\mathcal{L})\cap \{z\in \C:\Re z>-c_0\}\subset \{z\in\R:-c_0< z\le 0\}$.
	
	\medskip 
	\noindent{\bf Step 2.} To prove that the kernel of $\L$ in $X$ is $\{0\}$,
	 we let $f\in \ker \L$. Then 
	\begin{align*}
		(v\cdot\na_xf,f)_{L^2_{x,v}}+(-Lf,f)_{L^2_{x,v}}=0. 
	\end{align*}
Using boundary condition in \eqref{X1} and non-negativity of $-L$, we have 
\begin{align*}
	\int_{\pa\Omega}\int_{v\cdot n>0}v\cdot n|f|^2\,dvdS(x)=(-Lf,f)_{L^2_{x,v}}=0. 
\end{align*}
This implies that $f|_{\gamma_-}=0$ and $f=\P f$. Hence, 
	\begin{align*}
		f(x,v) = \big(a(x)+b(x)\cdot v+c(x)|v|^2\big)\mu^{1/2}(v),
	\end{align*}for some functions $a(x)$, $b(x)$ and $c(x)$ depending possibly on $x$. 
	Since $\L f=0$ and $f=\P f$, we have 
	\begin{align*}
		v\cdot\nabla_x \big(a(x)+b(x)\cdot v+c(x)|v|^2\big)\mu^{1/2}(v) =0,
	\end{align*}
and thus, $\nabla_x(a,b,c)(x) =0$, in the sense of distributions. It follows that $a=b=c=0$ since $f=0$ on the boundary $\gamma_-$.  This completes the proof of Theorem \ref{Main1}.
\end{proof}

\section{Exponential decay for linear equation}\label{sec4}

In this section, we will derive the exponential decay for linearized equation corresponding to \eqref{1}. We consider the linear equation 
\begin{align}\label{11a}
	\partial_tf  + v\cdot\nabla_xf  =  Lf,\quad f(0,x,v)=f_0(x,v), 
\end{align}
with the inflow boundary condition 
\begin{align}\label{11in}
f(t,x,v) = g(t,x,v),\quad \text{ on } \gamma_-.
\end{align}

To find the macroscopic dissipation, we take the following velocity moments
\begin{equation*}
	\mu^{\frac{1}{2}}, v_j\mu^{\frac{1}{2}}, \frac{1}{6}(|v|^2-3)\mu^{\frac{1}{2}},
	(v_j{v_m}-1)\mu^{\frac{1}{2}}, \frac{1}{10}(|v|^2-5)v_j \mu^{\frac{1}{2}}
\end{equation*}
with {$1\leq j,m\leq 3$} for the equation \eqref{11a}. One sees that  
the coefficient functions $[a,b,c]=[a,b,c](t,x)$ in \eqref{Pf} satisfy the fluid-type system 
\begin{equation}\label{11}
	\left\{\begin{array}{l}
		\dis \pa_t a +\nabla_x \cdot b=0,\\
		\dis \pa_t b +\na_x (a+2c)+\na_x\cdot \Theta (\{\I-\P\} f)=0,\\[1mm]
		\dis \pa_t c +\frac{1}{3}\na_x\cdot b +\frac{1}{6}\na_x\cdot
		\Lambda (\{\I-\P\} f)=0,\\[2mm]
		\dis \pa_t[\Theta_{{ jm}}(\{\I-\P\} f)+2c\de_{{ jm}}]+\pa_jb_m+\pa_m
		b_j=\Theta_{jm}({r}+{h}),\\[2mm]
		\dis \pa_t \Lambda_j(\{\I-\P\} f)+\pa_j c = \Lambda_j({r}+{h}),
	\end{array}\right.
\end{equation}
where the
high-order moment functions $\Theta=(\Theta_{jm})_{3\times 3}$ and
$\Lambda=(\Lambda_j)_{1\leq j\leq 3}$ are respectively defined by
\begin{equation}
	\Theta_{jm}(f) = \left ((v_jv_m-1)\mu^{\frac{1}{2}}, f\right)_{L^2_v},\ \ \
	\Lambda_j(f)=\frac{1}{10}\left ((|v|^2-5)v_j\mu^{\frac{1}{2}},
	f\right)_{L^2_v},\notag
\end{equation}
with the inner product taken with respect to velocity variable $v$ only, and the terms ${r}$ and ${h}$ on the right are given by
\begin{equation*}
	{r}= -{v}\cdot \na_{{x}} \{\I-\P\}f,\ \ {h}=L \{\I-\P\}f+g.
\end{equation*}
Notice that system \eqref{11} is understood in the sense of distributions. 

\begin{Lem}\label{LemMarco}
	Let $f$ be a solution to \eqref{11a} with inflow boundary condition \eqref{11in}. Then there exists an instant energy function $\E_{int}(t)$ satisfying 
	\begin{align}\label{Eint}
		|\E_{int}(t)|\lesssim \|f\|_{L^2_xL^2_v}
	\end{align}
such that 
	\begin{multline}\label{marco}
		\partial_t\E_{int}(t) + \lambda\|[{a},{b},{c}]\|^2_{L^2_{x}}
		\lesssim \|\{\I-\P\}{ f}\|_{L^2_{x}L^2_D}\\+\int_{\partial\Omega}\int_{v\cdot n>0}|v\cdot n||f|^2\,dvdS(x) + \int_{\partial\Omega}\int_{v\cdot n<0}|v\cdot n||g|^2\,dvdS(x).
	\end{multline}
\end{Lem}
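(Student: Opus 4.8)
The plan is to construct $\E_{int}(t)$ as a suitable linear combination of local-in-space functionals built from the macroscopic components $[a,b,c]$ and the high-order moments $\Theta(\{\I-\P\}f)$, $\Lambda(\{\I-\P\}f)$, following the classical macro-micro energy method of Guo and of Liu--Yang--Yu as adapted to bounded domains. First I would use the fluid-type system \eqref{11} to express the spatial derivatives $\na_x a$, $\na_x b$, $\na_x c$ in terms of $\partial_t$ of moment quantities plus microscopic and source terms. Concretely, from the $\partial_t b$ and $\partial_t c$ equations one reads off that $\na_x(a+2c)$ and $\na_x c$ are, modulo $\partial_t(\text{moments of }\{\I-\P\}f)$, controlled by the microscopic dissipation; similarly $\partial_t a + \na_x\cdot b = 0$ and the $\Theta$-equation give control of $\na_x b$. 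The standard device is to test each equation against an appropriate auxiliary function (e.g. test the $b$-equation against $\na_x\phi$ where $-\Delta_x\phi = a$, or work directly with $\na_x a$, $\na_x b$, $\na_x c$ in $L^2_x$), integrate by parts in $x$, and collect the resulting boundary contributions on $\partial\Omega$.

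The key steps, in order, would be: (i) derive the elliptic-type estimates $\|\na_x a\|_{L^2_x}^2 + \|\na_x b\|_{L^2_x}^2 + \|\na_x c\|_{L^2_x}^2 \lesssim -\frac{d}{dt}(\text{something bounded by }\|f\|_{L^2_xL^2_v}) + \|\{\I-\P\}f\|_{L^2_xL^2_D}^2 + \text{boundary terms}$, carefully tracking every integration by parts in $x$ so that the boundary integrals that appear are of the form $\int_{\partial\Omega}\int_{\R^3}(v\cdot n)\,(\text{moment})\cdot(\text{moment})\,dvdS(x)$, hence absorbed into $\int_{v\cdot n>0}|v\cdot n||f|^2 + \int_{v\cdot n<0}|v\cdot n||g|^2$ after Cauchy--Schwarz and using that $\{\I-\P\}f$ and $[a,b,c]$ are controlled pointwise on the boundary by $f$ (on $\gamma_+$) and by $g$ (on $\gamma_-$); (ii) upgrade $\|\na_x[a,b,c]\|_{L^2_x}^2$ to $\|[a,b,c]\|_{L^2_x}^2$ using a Poincaré-type inequality, which here is available because $f=g$ on $\gamma_-$ pins down the boundary values — alternatively one keeps a boundary term $\int_{\partial\Omega}|[a,b,c]|^2\,dS(x)$ which is again dominated by the RHS of \eqref{marco}; (iii) assemble $\E_{int}$ as the (small-coefficient) combination of the functionals produced in (i), verify \eqref{Eint} by the $L^2_v$-boundedness of all moment operators and of $\Theta,\Lambda$ on $\{\I-\P\}f$, and absorb all error terms.

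The main obstacle I expect is the careful bookkeeping of the boundary contributions: because $\Omega$ is a bounded domain with physical boundary, every integration by parts in $x$ used to close the macroscopic estimate generates a surface integral on $\partial\Omega$, and one must check that each such integral, after Cauchy--Schwarz, is genuinely controlled by $\int_{\partial\Omega}\int_{v\cdot n>0}|v\cdot n||f|^2\,dvdS(x)+\int_{\partial\Omega}\int_{v\cdot n<0}|v\cdot n||g|^2\,dvdS(x)$ — in particular one needs the velocity weight $\mu^{1/2}$ in the moments to turn bare $L^2_v$-moment bounds into $|v\cdot n|$-weighted boundary bounds, and one must handle the grazing set $\gamma_0$ (of measure zero) harmlessly. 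A secondary technical point is justifying all manipulations of \eqref{11} in the sense of distributions, i.e. regularizing $f$ (e.g. by mollification in $x$ or by the approximation scheme used earlier in the paper) so that the integrations by parts are legitimate, then passing to the limit; this is routine but must be stated. Once the boundary terms are dispatched, the remaining algebra is the standard macroscopic coercivity computation and requires no new idea.
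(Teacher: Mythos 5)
Your overall plan (macro--micro decomposition, dual test functions built from the fluid system \eqref{11}, careful tracking of boundary terms) is in the right family, and your parenthetical remark about testing against $\na_x\phi$ with $-\Delta_x\phi$ equal to a macroscopic quantity is in fact the device the paper uses. However, the concrete route you commit to in steps (i)--(ii) has a genuine gap. You propose to first bound $\|\na_x[a,b,c]\|_{L^2_x}^2$ and then upgrade to $\|[a,b,c]\|_{L^2_x}^2$ by a Poincar\'e-type inequality, arguing either that the inflow condition $f=g$ on $\gamma_-$ ``pins down the boundary values'' of $[a,b,c]$, or that one may keep a term $\int_{\pa\Omega}|[a,b,c]|^2\,dS(x)$ and dominate it by the right-hand side of \eqref{marco}. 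Neither claim holds: the inflow condition prescribes $f$ only on incoming velocities, while $a,b,c$ are moments over all of $\R^3_v$, so the trace of $[a,b,c]$ on $\pa\Omega$ is not determined by $g$; and the $|v\cdot n|$-weighted boundary integrals appearing in \eqref{marco} cannot control the unweighted trace of the macroscopic fields, since by Cauchy--Schwarz one would need $\int_{\R^3}\mu(v)/|v\cdot n|\,dv<\infty$, which fails (logarithmic divergence at the grazing set). The same degeneracy undermines your assertion in step (i) that $[a,b,c]$ is ``controlled pointwise on the boundary by $f$ on $\gamma_+$ and by $g$ on $\gamma_-$.''

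The paper sidesteps this entirely and never needs a Poincar\'e inequality or any trace bound on $[a,b,c]$. It tests \eqref{11a} with $\Phi_c=(|v|^2-5)v\cdot\na_x\phi_c\,\mu^{1/2}$, $\Phi_b$, $\Phi_a$, where $\phi_c,\phi_j,\phi_a$ solve Poisson problems $-\Delta_x\phi=c,\,b_j,\,a$ with zero Dirichlet data. The transport term paired with $\P f$ then yields $\|c\|^2_{L^2_x}$, $\|b_j\|^2_{L^2_x}$, $\|a\|^2_{L^2_x}$ directly, and the only boundary trace required is that of $\na_x\phi$, which is handled by elliptic regularity plus the trace theorem ($\int_{\pa\Omega}|\na_x\phi|^2\,dS\lesssim\|\phi\|_{H^2_x}^2\lesssim\|[a,b,c]\|_{L^2_x}^2$), so that the boundary term $S_4$ is absorbed after Cauchy--Schwarz with the Gaussian velocity weight of the test function. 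Time derivatives falling on $\phi$ are converted via \eqref{11} into $H^{-1}_x$ bounds on $\pa_t[a,b,c]$, and $\E_{int}$ is the combination $(f,\Phi_c)+\kappa(f,\Phi_b)+\kappa^2(f,\Phi_a)$, which gives \eqref{Eint} immediately. To repair your argument you should replace steps (i)--(ii) by this dual elliptic test-function computation; as written, the passage from gradient bounds to $\|[a,b,c]\|_{L^2_x}$ is unjustified.
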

\begin{proof}

	Let ${\Phi}(t,x,v)\in C^1((0,+\infty)\times\Omega\times\R^3)$ be a test function. Taking the inner product of ${\Phi}(t,x,v)$ and \eqref{11a} with respect to $(x,v)$, we obtain 
	\begin{align*}
		\partial_t(f,{\Phi})_{L^2_{x,v}}(t)&- (f,\partial_t{\Phi})_{L^2_{x,v}}-(f,v\cdot{\nabla_{x}\Phi})_{L^2_{x,v}} 
		\\
		&+\int_{\partial\Omega}(v\cdot n(x)f(x),{\Phi}(x))_{L^2_v}\,dS(x) = (L f,{\Phi})_{L^2_{x,v}},
	\end{align*}
	where $dS(x)$ is the spherical measure. 
	Using the decomposition ${f}=\P{f}+\{\I-\P\}{f}$, we have 
	\begin{align}\label{100}
		\partial_t(f,{\Phi})_{L^2_{x,v}}(t)-({\P f},v\cdot{\nabla_{x}\Phi})_{L^2_{x,v}}  = \sum_{j=1}^4S_j,
	\end{align}
	where $S_j$ are defined by 
	\begin{align*}
		S_1 &= (f,\partial_t{\Phi})_{L^2_{x,v}},\\
		S_2 &= ({\{\I-\P\}f},v\cdot{\nabla_{x}\Phi})_{L^2_{x,v}} ,\\
		S_3&= (L f,{\Phi})_{L^2_{x,v}},\\
		S_4 &= -\int_{\partial\Omega}(v\cdot n(x)f(x),{\Phi}(x))_{L^2_v}\,dS(x).
	\end{align*}

\medskip \noindent{\bf Estimate on ${c}(t,x)$:} We choose the test function 
$$
\Phi= \Phi_c= (|v|^2-5)v\cdot\nabla_{x}\phi_c(t,x)\mu^{1/2},
$$
where 				
\begin{equation*}\left\{\begin{aligned}
			&-\Delta_x \phi_c = {c}\ \text{ in } \Omega,\\
			&{\phi_c}(x)= 0 \ \text{ on }\ \pa\Omega.
		\end{aligned}\right.
	\end{equation*}
Then by standard elliptic estimates, one has 
\begin{align}\label{35}
	\|\phi_c\|_{H^2_x(\Omega)}\le \|c\|_{L^2_x}, \quad \|\pa_t\phi_c\|_{H^1_x(\Omega)}\le \|\pa_tc\|_{H^{-1}_x}. 
\end{align}
Using the third equation of \eqref{11}, we have 
\begin{align}\label{36}
	\|\pa_t\phi_c\|_{H^1_x}\le \|\pa_tc\|_{H^{-1}_x}\lesssim \|b\|_{L^2_x}+\|(\I-\P)f\|_{L^2_xL^2_D}.  
\end{align}
For the second term on the left-hand side of \eqref{100}, we have 
	\begin{align*}
		&\quad\,-({\P f},v\cdot{\nabla_{x}\Phi_{c}})_{L^2_{x,v}} \\
		&= -\sum_{j,m=1}^3({a}+{b}\cdot v+\frac{1}{2}(|v|^2-3){c} ,v_jv_m(|v|^2-5)\mu{\partial_j\partial_m\phi_{c}})_{L^2_{x,v}} \\
		&= 5\sum_{j=1}^3({c} ,{-\partial^2_j\phi_{c}})_{L^2_{x,v}}  = 5\|{c}\|^2_{L^2_{x,v}} .
	\end{align*}
	Note that $\int_{\R^3}|v|^4v^2_j\mu\,dv=35$, $\int_{\R^3}|v|^2v^2_j\mu\,dv=5$ and  $\int_{\R^3}v^2_j\mu\,dv=1$.  
	For $S_1$, we deduce from \eqref{36} that 
	\begin{align*}
		|S_1|&\le|(f,\partial_t{\Phi_c})_{L^2_{x,v}}| = |(\{\I-\P\}f,\partial_t{\Phi_c})_{L^2_{x,v}}|\\
		&\lesssim \eta\|\partial_t{\nabla_x\phi_c}\|^2_{L^2_{x}}+C_\eta\|\{\I-\P\}f\|^2_{L^2_{x}L^2_D}\\
		&\lesssim \eta\|b\|^2_{L^2_{x}}+C_\eta \|\{\I-\P\}f\|^2_{L^2_{x}L^2_D},
	\end{align*}
for any $\eta>0$. 
	For $S_2$, we have from \eqref{35} that  
	\begin{align*}
		|S_2|&\lesssim \eta \|\phi_c\|_{H^2_x}^2+C_\eta\|\{\I-\P\}f\|^2_{L^2_xL^2_D}\\&\lesssim \eta\|{c}\|^2_{L^2_{x}}+C_\eta\|\{\I-\P\}f\|^2_{L^2_{x}L^2_D}.
	\end{align*}
	For $S_3$, applying \eqref{e22} and \eqref{35}, we have 
	\begin{align*}
		|S_3|\le \eta \|{c}\|^2_{L^2_{x}}+C_\eta \|\{\I-\P\}f\|^2_{L^2_{x}L^2_D}.
	\end{align*}
	For $S_4$, by trace theorem, we have 
	\begin{align*}
		\int_{\pa\Omega}|\na_x\phi_c|^2\,dS(x) \lesssim \|\phi_c\|_{H^2_x(\Omega)}^2 \lesssim \|c\|_{L^2_x}^2,
	\end{align*}
and hence 
	\begin{align}\label{s4}\notag
		|S_4|&\le C_\eta\int_{\partial\Omega}\int_{\R^3}|v\cdot n||f|^2\,dvdS(x) + \eta\int_{\pa\Omega}|\na_x\phi_c|^2\,dS(x)\\
		&\lesssim  C_\eta\int_{\partial\Omega}\int_{v\cdot n>0}|v\cdot n||f|^2\,dvdS(x) + C_\eta\int_{\partial\Omega}\int_{v\cdot n<0}|v\cdot n||g|^2\,dvdS(x) + \eta\|c\|_{L^2_x}^2.
	\end{align}
	Collecting the above estimates for $S_j$ $(1\le j\le 4)$ and letting $\eta>0$ be suitably small, we obtain
	\begin{multline}\label{122a}
		\partial_t(f,\Phi_c)_{L^2_{x,v}} + \lambda\|{c}\|^2_{L^2_{x}} 
		\lesssim \eta \|{b}\|^2_{L^2_{x}}
		+C_\eta\|\{\I-\P\}f\|^2_{L^2_{x}L^2_D} \\
		+C_\eta\int_{\partial\Omega}\int_{v\cdot n>0}|v\cdot n||f|^2\,dvdS(x) + C_\eta\int_{\partial\Omega}\int_{v\cdot n<0}|v\cdot n||g|^2\,dvdS(x),
	\end{multline}for some $\lambda>0$.

	\medskip \noindent{\bf Estimate of ${b}(t,x)$:}
	For the estimates of $b$, we choose 
	\begin{align*}
		{\Phi}={\Phi_b}=\sum^3_{m=1}{\Phi^{j,m}_b},\ j=1,2,3,
	\end{align*}
	where 
	\begin{equation*}
		{\Phi^{j,m}_b}=\left\{\begin{aligned}
			\big(|v|^2v_mv_j{\partial_{x_m}\phi_j}-\frac{7}{2}(v_m^2-1){\partial_{x_j}\phi_j}\big)\mu^{1/2},\ m\neq j,\\
			\frac{7}{2}(v_j^2-1){\partial_{x_j}\phi_j}\mu^{1/2},\qquad\qquad\qquad m=j,
		\end{aligned}\right.
	\end{equation*}
	and $\phi_j$($1\le j\le 3$) solves  
	\begin{equation*}\left\{\begin{aligned}
			&-\Delta_x \phi_j = {b_j}\ \text{ in }\Omega,\\
			&{\phi_j}(x) = 0 \ \text{ on }\pa\Omega.
		\end{aligned}\right.
	\end{equation*}
Similar to \eqref{35} and \eqref{36}, we deduce from the second equation of \eqref{11} that 
\begin{align}\label{35a}
	\|\phi_j\|_{H^2_x(\Omega)}\le \|b\|_{L^2_x}, \quad 
	\|\pa_t\phi_j\|_{H^1_x}\le \|\pa_tb\|_{H^{-1}_x}\lesssim \|[a,c]\|_{L^2_x}+\|(\I-\P)f\|_{L^2_xL^2_D}.  
\end{align}
In view of this, we have the following estimates. 
	For $S_1$, one has 
	\begin{align*}\notag
		\dis|S_1| &\le  \big(\P f,\partial_t\Phi_b\big)_{L^2_{x,v}}+\big(\{\I-\P\} f,\partial_t\Phi_b\big)_{L^2_{x,v}}\\[1mm]
		&\notag\lesssim C_\eta\|c\|_{L^2_x}^2 + C_\eta\|\{\I-\P\} f\|_{L^2_x}^2 + \eta\|\partial_t\na_x b_j\|_{L^2_x}^2\\[1mm]
		&\lesssim  C_\eta\|c\|_{L^2_x} +C_\eta\|\{\I-\P\}f\|_{L^2_x} +\eta\|a\|_{L^2_x}.
	\end{align*}	
The term $S_2$ can be estimated as 
\begin{align*}
|S_2|\lesssim \|\{\I-\P\}f\|_{L^2_xL^2_D}\sum_{i,j,m=1}^3\|\partial_{x_ix_m}\phi_j\|_{L^2_x}
\lesssim C_\eta\|\{\I-\P\}f\|^2_{L^2_xL^2_D}+\eta \|b\|^2_{L^2_x}.
\end{align*}
	For $S_3$, we have from \eqref{e22} that 
	\begin{align*}\notag
		|S_3|
		&\lesssim\notag C_\eta\|\{\I-\P\}f\|^2_{L^2_xL^2_D}+ \eta\|\nabla_x\phi_j\|_{L^2_x}\\
		&\lesssim C_\eta\|\{\I-\P\}f\|_{L^2_xL^2_D}^2+ \eta\|b\|_{L^2_x}.
	\end{align*}
	For $S_4$, similar to \eqref{s4}, by trace theorem, we have 
\begin{align*}
	\int_{\pa\Omega}|\na_x\phi_j|^2\,dS(x) \lesssim \|\phi_j\|_{H^2_x(\Omega)}^2 \lesssim \|b\|_{L^2_x}^2,
\end{align*}
and hence 
\begin{align*}
	|S_4|\lesssim  C_\eta\int_{\partial\Omega}\int_{v\cdot n>0}|v\cdot n||f|^2\,dvdS(x) + C_\eta\int_{\partial\Omega}\int_{v\cdot n<0}|v\cdot n||g|^2\,dvdS(x) + \eta\|b\|_{L^2_x}^2.
\end{align*}
	For the second term on the left-hand side of \eqref{100}, we have  
	\begin{align*}\notag
		&\quad\,-\sum^3_{m=1}(\P f,v\cdot \na_x{\Phi^{j,m}_b})_{L^2_{x,v}}\\
		&\notag=-\sum^3_{m=1}\big(({a}+{b}\cdot v+\frac{1}{2}(|v|^2-3){c})\mu^{1/2},v\cdot{\nabla_x\Phi^{j,m}_b}\big)_{L^2_{x,v}}\\
		\notag&=-\sum^{3}_{m=1,m\neq j}(v_mv_j\mu^{1/2}{b_j},|v|^2v_mv_j\mu^{1/2}{\partial_{x_m}^2\phi_j})_{L^2_{x,v}}\\
		&\notag\qquad-\sum^{3}_{m=1,m\neq j}(v_mv_j\mu^{1/2}{b_m},|v|^2v_mv_j\mu^{1/2}{\partial_{x_m}\partial_{x_j}\phi_j})_{L^2_{x,v}}\\
		&\notag\qquad+7\sum^{3}_{m=1,m\neq j}({b_m},{\partial_{x_m}\partial_{x_j}\phi_j})_{L^2_{x}}-7({b_m},{\partial^2_{x_j}\phi_j})_{L^2_{x}}\\
		&= -7 \sum^3_{m=1}({b_j},{\partial_{x_m}^2\phi_j})_{L^2_{x}}=7\|{b_j}\|^2_{L^2_{x}}.
	\end{align*}
Combining the above estimates and letting $\eta>0$ be  sufficiently small, we have 
	\begin{multline}\label{122b}
		\partial_t(f,\Phi_b)_{L^2_{x,v}} + \lambda\|{b}\|^2_{L^2_{x}}\lesssim \eta\|a\|^2_{L^2_{x}}+ C_\eta\|{c}\|^2_{L^2_{x}}
		+C_\eta\|\{\I-\P\}{f}\|^2_{L^2_{x}L^2_D}\\  +C_\eta\int_{\partial\Omega}\int_{v\cdot n>0}|v\cdot n||f|^2\,dvdS(x) + C_\eta\int_{\partial\Omega}\int_{v\cdot n<0}|v\cdot n||g|^2\,dvdS(x),
	\end{multline}for some $\lambda>0$.

	\medskip\noindent{\bf Estimate on ${a}(t,x)$:} We choose the following test function
	\begin{align*}
		{\Phi} = {\Phi_{a}} = (|v|^2-10)v\cdot{\nabla_{x}\phi_{a}}(t,x)\mu^{1/2},
	\end{align*}
	where $\phi_a$ solves 
	\begin{equation*}\left\{\begin{aligned}
			&-\Delta_x \phi_a = {a}\ \text{ in }\Omega,\\
			&{\phi_a}(x)= 0 \ \text{ on }\ \pa\Omega.
		\end{aligned}\right.
	\end{equation*}
Similar to \eqref{35} and \eqref{36}, we deduce from the first equation of \eqref{11} and standard elliptic estimates that 
\begin{align}\label{35b}
	\|\phi_a\|_{H^2_x(\Omega)}\le \|a\|_{L^2_x}, \quad 
	\|\pa_t\phi_a\|_{H^1_x}\le \|\pa_ta\|_{H^{-1}_x}\lesssim \|b\|_{L^2_x}.  
\end{align}
For the second term on the left-hand side of \eqref{100}, we have 
	\begin{align*}
		&\quad\,-({\P f},v\cdot{\nabla_{x}\Phi_{a}})_{L^2_{x,v}} \\
		&= -\sum_{j,m=1}^3({a}+{b}\cdot v+\frac{1}{2}(|v|^2-3){c} ,v_jv_m(|v|^2-10)\mu{\partial_j\partial_m\phi_{a}})_{L^2_{x,v}} \\
		&= \sum_{j=1}^3({a} ,{-\partial^2_j\phi_{a}})_{L^2_{x}}  = \|{a}\|^2_{L^2_{x}} .
	\end{align*}
For $S_1$, we apply \eqref{35b} to obtain 
	\begin{align*}
		|S_1|&\le \big|\big(\{\I-\P\}f, \pa_t\Phi_a\big)_{L^2_{x,v}}\big|
		+\big|\big(\P f, \pa_t\Phi_a\big)_{L^2_{x,v}}\big|\\
		&\lesssim \|\{\I-\P\}f\|^2_{L^2_xL^2_D} + \|b\|_{L^2_x}^2 + \|\pa_t\na_x\phi_a\|^2_{L^2_x}\\
		&\lesssim \|\{\I-\P\}f\|^2_{L^2_xL^2_D} + \|b\|_{L^2_x}^2. 
	\end{align*}	
	For $S_2$ and $S_3$, by \eqref{35b}, we have 
	\begin{align*}
		|S_2|+|S_3|\lesssim C_\eta\|\{\I-\P\}f\|^2_{L^2_xL^2_D} + \eta\|a\|^2_{L^2_x}. 
	\end{align*}
For $S_4$, similar to \eqref{s4}, by trace theorem and Cauchy-Schwarz inequality, we have 
\begin{align*}
	|S_4|\lesssim  C_\eta\int_{\partial\Omega}\int_{v\cdot n>0}|v\cdot n||f|^2\,dvdS(x) + C_\eta\int_{\partial\Omega}\int_{v\cdot n<0}|v\cdot n||g|^2\,dvdS(x) + \eta\|a\|_{L^2_x}^2.
\end{align*}
	Combining the above estimates and letting $\eta>0$ be small enough, we have 
	\begin{multline}\label{122d}
		\partial_t(f,\Phi_a)_{L^2_{x,v}} + \lambda\|{a}\|^2_{L^2_{x}}
		\lesssim \|\partial^\alpha\{\I-\P\}{f}\|^2_{L^2_{x}L^2_D}+\|{b}\|^2_{L^2_{x}}\\
		 +C_\eta\int_{\partial\Omega}\int_{v\cdot n>0}|v\cdot n||f|^2\,dvdS(x) + C_\eta\int_{\partial\Omega}\int_{v\cdot n<0}|v\cdot n||g|^2\,dvdS(x).
	\end{multline}

	\medskip
	
	Now we take the linear combination $\eqref{122a}+\kappa\times\eqref{122b}+\kappa^2\times\eqref{122d}$,  and let $\kappa,\eta>0$ be sufficiently small to obtain  
	\begin{multline*}
		\partial_t\E_{int}(t) + \lambda\|[{a},{b},{c}]\|^2_{L^2_{x}}
		\lesssim \|\{\I-\P\}{ f}\|_{L^2_{x}L^2_D}\\+\int_{\partial\Omega}\int_{v\cdot n>0}|v\cdot n||f|^2\,dvdS(x) + \int_{\partial\Omega}\int_{v\cdot n<0}|v\cdot n||g|^2\,dvdS(x),
	\end{multline*}
	where $\E_{int}(t)$ is given by 
	\begin{align*}
		\E_{int}(t) = 
		(f,\Phi_c)_{L^2_{x,v}} + \kappa(f,\Phi_b)_{L^2_{x,v}}+\kappa^2(f,\Phi_a)_{L^2_{x,v}}.
	\end{align*}
	Using \eqref{35}, \eqref{35a} and \eqref{35b}, we know that 
	\begin{align*}
		|\E_{int}(t)|\lesssim \|f\|_{L^2_xL^2_v}. 
	\end{align*}
This completes the proof of Lemma \ref{LemMarco}.	
\end{proof}

Applying the above macroscopic estimates on $[a,b,c]$, we can prove Theorem \ref{MaindecayLandau}. 
\begin{proof}
	[Proof of Theorem \ref{MaindecayLandau}]
	We are going to prove the exponential decay of solution to linearized Landau and Boltzmann equations. 
	Let $W$ be given by \eqref{W}. 
	Multiplying \eqref{11a} by $W$ and noticing \eqref{W1}, we have 
	\begin{align}\label{37}
		\pa_t(Wf) + v\cdot \na_x (Wf) + q|v|^2\<v\>^{-1}Wf - WL(f)  = 0.
	\end{align}
Letting $q=0$ in \eqref{W} and taking the inner product of \eqref{37} with $f$ over $\Omega\times\R^3_v$ to obtain
\begin{multline}\label{38a}
	\frac{1}{2}\pa_t\|f\|^2_{L^2_{x,v}} + \frac{1}{2}\int_{\pa\Omega}\int_{v\cdot n>0}v\cdot n|f|^2\,dvdS(x) + c_1\|(\I-\P)f\|^2_{L^2_xL^2_D}\\ \le  \frac{1}{2}\int_{\pa\Omega}\int_{v\cdot n<0}|v\cdot n||g|^2\,dvdS(x). 
\end{multline}
Here we used \eqref{e21}. Letting $q>0$ in \eqref{W} and taking the inner product of \eqref{37} with $Wf$ over $\Omega\times\R^3_v$ to obtain
\begin{multline}\label{38b}
	\frac{1}{2}\pa_t\|Wf\|^2_{L^2_{x,v}} + \frac{1}{2}\int_{\pa\Omega}\int_{v\cdot n>0}v\cdot n|Wf|^2\,dvdS(x) + q\||v|\<v\>^{-\frac{1}{2}}Wf\|_{L^2_{x,v}}
	\\
	\le C_1\|f\|^2_{L^2_xL^2_D} + \frac{1}{2}\int_{\pa\Omega}\int_{v\cdot n<0}|v\cdot n||Wg|^2\,dvdS(x). 
\end{multline}
Taking the linear combination $\eqref{38a}+\kappa\times\eqref{marco}$ with sufficiently small $\kappa>0$, we have 
\begin{multline}\label{38c}
	\pa_t\Big(\frac{1}{2}\|f\|^2_{L^2_{x,v}}+ \kappa\E_{int}(t)\Big) + \lambda\|f\|^2_{L^2_xL^2_D}
	+\frac{1}{4}\int_{\partial\Omega}\int_{v\cdot n>0}v\cdot n|f|^2\,dvdS(x) \\
	\lesssim  \int_{\pa\Omega}\int_{v\cdot n<0}|v\cdot n||g|^2\,dvdS(x),
\end{multline}
for some constant $\lam>0$. 
To find the large velocity decay and eliminate the term $\|f\|_{L^2_xL^2_D}$ in \eqref{38b}, we take the combination $\eqref{38c}+\kappa\times\eqref{38b}$ with sufficiently small $\kappa>0$ to deduce that 
\begin{multline}\label{38d}
	\pa_t\E(t)
 + \lambda\|f\|^2_{L^2_xL^2_D}
	+\frac{1}{4}\int_{\partial\Omega}\int_{v\cdot n>0}v\cdot n|f|^2\,dvdS(x) \\
	 + \frac{\kappa}{2}\int_{\pa\Omega}\int_{v\cdot n>0}v\cdot n|Wf|^2\,dvdS(x) + q\kappa\||v|\<v\>^{-\frac{1}{2}}Wf\|_{L^2_{x,v}}
	\\
	\lesssim  \int_{\pa\Omega}\int_{v\cdot n<0}|v\cdot n||g|^2\,dvdS(x),
\end{multline}
with some small constant $\lam>0$. Here $\E(t)$ is defined by 
\begin{align*}
	\E(t) = \frac{1}{2}\|f\|^2_{L^2_{x,v}}+ \kappa\E_{int}(t)+\frac{\kappa}{2}\|Wf\|^2_{L^2_{x,v}}.
\end{align*}
Choosing $\kappa>0$ small enough, and using \eqref{Eint} and noticing $W\approx 1$, we know that 
\begin{align*}
	\E(t) \approx \|f\|_{L^2_{x,v}}^2. 
\end{align*}
Noticing that 
\begin{align*}
	\|f\|_{L^2_xL^2_v}\lesssim \|f\|^2_{L^2_xL^2_D} + \||v|\<v\>^{-\frac{1}{2}}Wf\|_{L^2_{x,v}},
\end{align*}
we deduce from \eqref{38d} that 
\begin{align}\label{38e}
	\pa_t\E(t) + \lam \E(t) \lesssim \int_{\pa\Omega}\int_{v\cdot n<0}|v\cdot n||g|^2\,dvdS(x).
\end{align}
This is the main {\em a priori} estimate of linear equation \eqref{11a}. Solving ODE \eqref{38e}, we obtain that 
\begin{align*}
	\sup_{t\ge 0}\{e^{\lam t}\E(t)\} \lesssim \|f_0\|^2_{L^2_{x,v}} + \sup_{t\geq 0}\int_{\pa\Omega}\int_{v\cdot n<0}|v\cdot n|e^{\lam t}|g|^2\,dvdS(x).
\end{align*}
Choosing $0<\lam<\delta_0$ with $\delta_0$ given in \eqref{g1}, we conclude Theorem \ref{MaindecayLandau}. 
\end{proof}

\section{Exponential decay for cutoff Boltzmann equation}\label{sec5}

In this section, we will derive the exponential decay for cutoff Boltzmann equation with soft potential. 
We begin with denoting the following boundary integral.
For fixed $x\in\pa\Omega$, denote the boundary inner product over $\gamma_\pm$ in $v$ as 
\begin{align*}
	(g_1,g_2)_{\gamma_\pm}(t,x) = \int_{\pm v\cdot n(x)>0}g_1(t,x,v)g_2(t,x,v)|v\cdot n(x)|\,dv. 
\end{align*}
We define $\|g\|_{\gamma}=\|g\|_{\gamma_+}+\|g\|_{\gamma_-}$ to be the $L^2(\gamma)$ with respect to the measure $|v\cdot n(x)|dS(x)dv$. Here $\|g\|_{\gamma_\pm}^2=(g,g)_{\gamma_\pm}$. 

\begin{Lem}\label{Lem42}
	There exists $M>0$ such that for any solution $f(t,x,v)$ to the linearized Boltzmann/Landau equation \eqref{1}, 
	\begin{align}\label{44}
		\int^1_0\|\P f(s)\|_{L^2_xL^2_D}^2\,ds\le M\Big\{c_1\int^1_0\|(\I-\P) f(s)\|_{L^2_xL^2_D}^2\,ds+ \int^1_0\int_{\pa\Omega}\int_{\R^3}|v\cdot n||f(s)|^2dvdS(x)ds\Big\}, 
	\end{align}
where $c_1>0$ is a constant given in \eqref{e21}. 
\end{Lem}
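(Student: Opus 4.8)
The plan is to prove \eqref{44} by a compactness--contradiction argument, in the same spirit as the triviality of $\ker\mathcal{L}$ obtained in the proof of Theorem~\ref{Main1}. Assume \eqref{44} fails for every $M>0$; then there exist solutions $\{f_n\}$ of \eqref{1} on $(0,1)$ with $\int_0^1\|\P f_n(s)\|_{L^2_xL^2_D}^2\,ds=1$ while the right--hand side of \eqref{44} (divided by $M$) tends to $0$, so that $(\I-\P)f_n\to0$ in $L^2((0,1);L^2_xL^2_D)$ and $\int_0^1\int_{\pa\Omega}\int_{\R^3}|v\cdot n|\,|f_n|^2\,dvdS(x)ds\to0$. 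Writing $\P f_n=(a_n+b_n\cdot v+c_n|v|^2)\mu^{1/2}$ and using that on the five--dimensional space $\ker L$ the norms $|\cdot|_{L^2_D}$ and $|\cdot|_{L^2_v}$ are equivalent, the fields $[a_n,b_n,c_n]$ are bounded in $L^2((0,1)\times\Omega)$ and bounded below away from $0$, while the boundary traces of $f_n$ (in particular the incoming traces $f_n|_{\gamma_-}$) tend to $0$ in $L^2$.

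Next I would upgrade the weak convergence of $[a_n,b_n,c_n]$ to strong convergence. Since $L\P=0$ we have $Lf_n=L(\I-\P)f_n$, and by Lemma~\ref{Lem21} this tends to $0$ in a fixed $v$--weighted $L^2((0,1)\times\Omega\times\R^3)$ space; as $\P f_n$ has Gaussian decay, a suitably $v$--weighted version $F_n$ of $f_n$ is bounded in $L^2((0,1)\times\Omega\times\R^3)$ and solves a transport equation $\pa_tF_n+v\cdot\na_xF_n=G_n$ with $G_n$ bounded in $L^2$. A velocity averaging argument then gives that $\int_{\R^3}F_n\psi\,dv$ is relatively compact in $L^2((0,1)\times\Omega)$ for $\psi\in C^\infty_c(\R^3)$, and, approximating the Gaussian weights defining $a_n,b_n,c_n$, that $[a_n,b_n,c_n]$ is relatively compact in $L^2((0,1)\times\Omega)$. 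Passing to a subsequence, $[a_n,b_n,c_n]\to[a,b,c]$ strongly with $\|[a,b,c]\|_{L^2((0,1)\times\Omega)}\neq0$, hence $f_n\to\P f:=(a+b\cdot v+c|v|^2)\mu^{1/2}$ strongly in $L^2((0,1);L^2_xL^2_D)$.

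Then I would pass to the limit in the weak formulation of $\pa_tf_n+v\cdot\na_xf_n=Lf_n$: using $Lf_n=L(\I-\P)f_n\to0$ in $\D'$ and testing also against functions that need not vanish on $\pa\Omega$ (so that Green's identity for $v\cdot\na_x$ combined with $f_n|_\gamma\to0$ in $L^2$ delivers the boundary information), one obtains that $\P f$ solves $\pa_t(\P f)+v\cdot\na_x(\P f)=0$ in $(0,1)\times\Omega\times\R^3$ with vanishing trace on $(0,1)\times\pa\Omega\times\R^3$. Expanding this identity in powers of $v$ and matching coefficients yields $\pa_ta=0$, $\na_xc=0$, $\pa_tb_k+\pa_{x_k}a=0$, $\pa_{x_k}b_l+\pa_{x_l}b_k=0$ $(k\neq l)$ and $\pa_tc+\pa_{x_k}b_k=0$ for each $k$; solving this system, $a$ must be a quadratic in $x$ with Hessian $\al\,\1_{3\times3}$ for a constant $\al$, $c=c(t)$ a polynomial in $t$, and $b$ affine in $(t,x)$. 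The vanishing boundary trace forces $a=b=c=0$ on $\pa\Omega$ for a.e.\ $t$: this first kills $c$ (a polynomial in $t$ that vanishes for a.e.\ $t$), whence $\al=0$; then $a$ is affine and vanishes on $\pa\Omega$, which is contained in no hyperplane since $\Omega$ is bounded and open, so $a\equiv0$; finally $b=b(x)=Ax+d$ with $A$ antisymmetric, and the zero set of such a field is at most a line, so it cannot vanish on the two--dimensional set $\pa\Omega$ unless $b\equiv0$. Thus $\P f\equiv0$, contradicting $\|[a,b,c]\|_{L^2((0,1)\times\Omega)}\neq0$, and \eqref{44} follows.

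The main obstacle is the compactness step: the fluid system \eqref{11} alone only gives $[a_n,b_n,c_n]$ bounded in $L^2_{t,x}$ with $\pa_t[a_n,b_n,c_n]$ bounded in $L^2_tH^{-1}_x$, hence relative compactness only in $L^2_tH^{-1}_x$, which does not preclude the strong limit being $0$; the extra spatial regularity needed to see that the limit is nontrivial must be extracted from a velocity averaging estimate for $F_n$, and carrying this out on the bounded domain $\Omega$ (rather than on all of $\R^3_x$) requires either an extension of $f_n$ across $\pa\Omega$ exploiting $f_n|_{\gamma_-}\to0$ or an interior averaging estimate together with a boundary--layer bound. A second, more routine, point is the justification that the limiting profile inherits the zero boundary trace, which I would handle via the Green identity for $v\cdot\na_x$ together with the established $L^2(\gamma)$ convergence of the traces $f_n|_\gamma$.
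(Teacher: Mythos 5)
Your overall scheme coincides with the paper's: argue by contradiction, normalize so that $\int_0^1\|\P Z_k(s)\|^2_{L^2_xL^2_D}\,ds=1$ while the microscopic part and the full boundary integral vanish, extract a weak limit $Z=\P Z$ solving free transport, classify it (your coefficient matching reproduces exactly the polynomial form \eqref{499} of Lemma \ref{Lem43}, which the paper quotes from Guo), show the inherited zero trace on $\gamma$ forces $Z\equiv0$, and contradict this with strong convergence of $Z_k$ to $Z$. The classification step and the boundary argument killing $(a,b,c)$ are sound and agree with the paper; the paper obtains $Z|_\gamma=0$ via Ukai's trace theorem applied to $\1_{\{|v\cdot n|\ge\varepsilon/2,\,|v|\le\varepsilon^{-m}\}}(Z_k-Z)$ together with \eqref{49}, which is the rigorous version of your Green-identity step.

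The genuine gap is the one you flag yourself and do not close: strong convergence up to the boundary. Velocity averaging (the paper's Lemma \ref{Lem44}) gives compactness only on compact subsets of $\Omega$, and neither of your proposed remedies is routine: extending $f_k$ by zero across $\partial\Omega$ produces a surface-measure source $(v\cdot n)\,f_k|_{\gamma}\,\delta_{\partial\Omega}$ which is not in $L^2$, so the $L^2$ averaging lemma cannot be applied to the extension, while an ``interior estimate plus boundary-layer bound'' is precisely the statement that must be proved. The paper supplies this missing ingredient in Lemma \ref{Lem45} and \eqref{412}: in the thin shell $\Omega\setminus\Omega_{\varepsilon^4}$ one splits velocities into the grazing or large part ($|v\cdot n|\le\varepsilon$ or $|v|\ge\varepsilon^{-m}$), whose contribution is $O(\varepsilon)+O(1/k)$ because $[a_k,b_k,c_k]$ is bounded in $L^2_{t,x}$ and $\int_{\{|v\cdot n|\le\varepsilon\ \text{or}\ |v|\ge\varepsilon^{-m}\}}\<v\>^{\gamma+2}\mu\,dv\le C\varepsilon$, and the non-grazing bounded-velocity part, which is controlled along characteristics: such particles cross the shell in a short time, so their density is bounded by interior values on $\partial\Omega_{\varepsilon^4}$ and by the vanishing boundary data, yielding \eqref{412}. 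Without an argument of this kind the sequence could concentrate at $\partial\Omega$, the strong limit could lose mass, and the contradiction with \eqref{414a} (i.e.\ $\int_0^1\|Z\|^2_{L^2_xL^2_D}\,ds=1$) collapses; so as written your proposal establishes the easy parts of the proof but leaves its decisive technical step unproved.
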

	The proof of Lemma \ref{Lem42} is based on a contradiction argument.
	 If Lemma \ref{Lem42} was false, then no $M$ exists as in Lemma \ref{Lem42} for any solution $f$. Hence, for any $k\ge 1$, there exists a sequence of non-zero solutions $f_k(t,x,v)$ to the linearized Boltzmann/Landau equation \eqref{1} such that 
	\begin{align*}
		\int^1_0\|\P f_k(s)\|_{L^2_xL^2_D}^2\,ds\ge k\Big\{c_1\int^1_0\|(\I-\P) f_k(s)\|_{L^2_xL^2_D}^2\,ds+ \int^1_0\int_{\pa\Omega}\int_{\R^3}|v\cdot n||f_k(s)|^2dvdS(x)ds\Big\}.
	\end{align*}
	Equivalently, in terms of normalization $Z_k(t,x,v)\equiv \frac{f_k(t,x,v)}{\sqrt{\int^1_0\|\P f_k(s)\|_{L^2_xL^2_D}^2\,ds}}$, we have 
	\begin{align}\label{48}
		\int^1_0\|\P Z_k(s)\|_{L^2_xL^2_D}^2\,ds = 1,
	\end{align}
and 
\begin{align}\label{49}
	\Big\{c_1\int^1_0\|(\I-\P) Z_k(s)\|_{L^2_xL^2_D}^2\,ds+ \int^1_0\int_{\pa\Omega}\int_{\R^3}|v\cdot n||Z_k(s)|^2dvdS(x)ds\Big\} \le \frac{1}{k}.
\end{align}
We also have from $\{\pa_t+v\cdot\na_x+L\}f_k=0$ that 
\begin{align}\label{49a}
	\{\pa_t+v\cdot\na_x+L\}Z_k=0.
\end{align}
Since $\sup_{k\ge 1}\int^1_0\|Z_k(s)\|_{L^2_xL^2_D}^2\,ds<\infty$, up to a subsequence, there exists a function $Z(t,x,v)$ satisfying $\int^1_0\|Z\|^2_{L^2_xL^2_D}ds<\infty$ such that 
\begin{align*}
	Z_k\rightharpoonup Z\ \text{ weakly in }\int^1_0\|\cdot\|_{L^2_xL^2_D}^2\,ds,
\end{align*}
and from \eqref{49}, 
\begin{align}\label{49b}
	\int^1_0\|(\I-\P) Z_k(s)\|_{L^2_xL^2_D}^2\,ds\to 0, \ \text{ as }k\to \infty. 
\end{align}
Then it is straightforward to verify  that 
\begin{align*}
	\P Z_k\rightharpoonup \P Z\ \text{ and }\ (\I-\P)Z_k\rightharpoonup(\I-\P) Z, \ \text{ weakly in }\int^1_0\|\cdot\|_{L^2_xL^2_D}^2\,ds, 
\end{align*}
and $(\I-\P)Z=0$ from \eqref{49b}. Thus, 
\begin{align*}
	Z(t,x,v) = \{a(t,x)+v\cdot b(t,x) +|v|^2c(t,x)\}\sqrt{\mu}. 
\end{align*}
Also, we deduce from \eqref{49a} and \eqref{49b} that 
\begin{align}\label{49c}
	\pa_t Z + v\cdot \na_x Z = 0. 
\end{align}
Now the main strategy is to show that, on one hand, $Z=0$ from \eqref{49c} and the inherited boundary conditions \eqref{49}. On the other hand, $Z_k$ will be shown to converge strongly to $Z$ in $\int^1_0\|\cdot\|_{L^2_xL^2_D}^2\,ds$, and $\int^1_0\|Z\|_{L^2_xL^2_D}^2\,ds\neq 0$. This leads to a contradiction.

	\begin{Lem}\label{Lem43}
		There exist constants $a_0,c_0,c_1,c_2$, and constant vectors $b_0,b_1$ and $\varpi$ such that $Z(t,x,v)$ takes the form:
		\begin{align}\label{499}
			\bigg(\Big\{\frac{c_0}{2}|x|^2-b_0\cdot x+a_0\Big\}+\big\{-c_0tx-c_1x+\varphi\times x+b_0t+b_1\big\}\times v+\Big\{\frac{c_0t^2}{2}+c_1t+c_2\Big\}|v|^2\bigg)\sqrt\mu. 
		\end{align}
	Moreover, these constants are finite:
	\begin{align*}
		|a_0|+|c_0|+|c_1|+|c_2|+|b_0|+|b_1|+|\varpi|<\infty.
		\end{align*}
	\end{Lem}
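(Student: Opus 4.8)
The plan is to solve the free transport equation $\pa_t Z+v\cdot\na_x Z=0$ for $Z$ of the hydrodynamic form $Z=(a+v\cdot b+|v|^2 c)\sqrt\mu$ by matching coefficients of the velocity monomials. First I would substitute the ansatz into \eqref{49c}. Expanding $v\cdot\na_x(a+v\cdot b+|v|^2c)=\sum_i v_i\pa_{x_i}a+\sum_{i,j}v_iv_j\pa_{x_i}b_j+\sum_i v_i|v|^2\pa_{x_i}c$, one collects the terms by their degree in $v$: the degree-one part of the $\pa_t$ term is $v\cdot\pa_t b$, the degree-one part of transport is $v\cdot\na_x a$ plus the trace contribution from $\sum v_iv_j\pa_{x_i}b_j$ restricted to $i=j$, and so on. Equating coefficients of $1,\ v_i,\ v_iv_j\ (i\ne j),\ v_i^2,\ v_i|v|^2$ separately (using that these are linearly independent after multiplication by $\sqrt\mu$ and that odd/even moments decouple) yields the Boussinesq-type relations: $\pa_t c=0$-type constraints together with $\pa_{x_i}b_j+\pa_{x_j}b_i=0$ for $i\ne j$, $\pa_{x_i}b_i=\pa_{x_j}b_j$ for all $i,j$, $\pa_t b+\na_x(a+\text{something})=0$, $\pa_t a=0$, $\pa_t c+\tfrac13\na_x\cdot b=0$, and $\pa_t b_j+\pa_{x_j}(\cdots)=0$ linking $b$ to $c$. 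This is exactly the classical computation identifying the null space of free transport intersected with the kernel of $L$; I would carry it out just far enough to extract the ODE/PDE system on $(a,b,c)$.

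Next I would solve that system explicitly. The relations $\pa_{x_i}b_j+\pa_{x_j}b_i=0$ ($i\ne j$) and $\pa_{x_i}b_i$ independent of $i$ force $b(t,x)$ to be, in $x$, an affine map of the special form $b=-c_1(t)\,x+\varpi(t)\times x+\beta(t)$ with $c_1$ scalar and $\varpi,\beta$ vectors (the antisymmetric part gives the rotation $\varpi\times x$, the isotropic symmetric part gives $-c_1 x$). Plugging into $\pa_t c+\tfrac13\na_x\cdot b=0$ gives $\pa_t c=c_1$, so $c$ is affine in $t$; the equation $\pa_t b_j+\pa_{x_j}c=0$ then forces $\pa_t\varpi=0$, $\pa_t c_1=\,$const (hence $c$ quadratic in $t$ with leading coefficient $c_0/2$), and determines the $t$- and $x$-dependence of $\beta$; finally $\pa_t a=0$ together with $\pa_t b+\na_x(a+2c)=0$ pins down $a$ as a quadratic polynomial in $x$ with Hessian $c_0\,\mathrm{Id}$, i.e. $a=\tfrac{c_0}{2}|x|^2-b_0\cdot x+a_0$. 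Assembling the pieces gives precisely the stated form \eqref{499}, with the integration constants $a_0,c_0,c_1,c_2\in\R$ and $b_0,b_1,\varpi\in\R^3$.

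For the finiteness claim $|a_0|+|c_0|+|c_1|+|c_2|+|b_0|+|b_1|+|\varpi|<\infty$, I would argue that $Z\in\int_0^1\|\cdot\|_{L^2_xL^2_D}^2\,ds$ as the weak limit of $\{Z_k\}$, and that on the fixed bounded domain $[0,1]\times\Omega$ the $L^2_xL^2_D$ norm controls a genuine $L^2$-type norm of the polynomial $Z$ (since $\Omega$ is bounded, $\mu$ is a fixed Gaussian, and $L^2_D\gtrsim L^2_v(\<v\>^{\kappa/2})$ still dominates $L^2$ against the Gaussian weight up to a finite constant). Hence the polynomial $Z$ has finite norm on a set of positive measure in $(t,x)$, and since a nonzero polynomial of bounded degree on an open set determines its coefficients continuously, each of the seven scalar/vector constants is finite. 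One should note that this does not yet assert $Z\ne 0$; the next step of the overall argument (not this lemma) is to use the boundary conditions inherited from \eqref{49} to kill all these constants, forcing $Z=0$, which then contradicts the strong convergence $\int_0^1\|Z\|_{L^2_xL^2_D}^2\,ds=1$ to be established afterwards.

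The main obstacle I anticipate is bookkeeping rather than conceptual: carefully separating the velocity-monomial coefficients through the $\sqrt\mu$ weight and through the $|v|^2$ cross-terms (e.g. $v_i|v|^2$ versus $v_i$ pieces, and the diagonal $v_i^2$ versus off-diagonal $v_iv_j$ pieces of $\na_x b$), and then integrating the resulting first-order system in the correct order so that the rotational term $\varpi\times x$ and the time-polynomial structure in $c$ emerge cleanly. A secondary subtlety is justifying that \eqref{49c} holds in a strong enough (distributional) sense to legitimize the coefficient matching; this follows from $Z$ being the weak limit together with $(\I-\P)Z=0$ and the uniform bounds \eqref{48}–\eqref{49}, which pass to the limit in the weak formulation of \eqref{49a}.
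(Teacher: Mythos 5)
Your proposal is correct and is essentially the argument behind the result the paper invokes: the paper's "proof" is only a citation to Lemma 6 of \cite{Guo2009}, whose proof is exactly your computation — substitute $Z=(a+b\cdot v+c|v|^2)\sqrt{\mu}$ into $\partial_t Z+v\cdot\nabla_x Z=0$, match the coefficients of the velocity monomials, integrate the resulting system, and deduce finiteness of the constants from the finite $L^2$-type bound on $Z$ over the bounded set $[0,1]\times\Omega$. One bookkeeping correction: the matched relations are $\partial_t a=0$, $\partial_t b+\nabla_x a=0$, $\partial_{x_i}b_j+\partial_{x_j}b_i+2\,\partial_t c\,\delta_{ij}=0$ and $\nabla_x c=0$ (not $\partial_t b_j+\partial_{x_j}c=0$); it is the requirement that $\partial_t b=-\nabla_x a$ be an $x$-gradient, combined with $\partial_t a=0$, that forces $\partial_t\varpi=0$ and makes $\partial_t^2 c$ and the translational part constant, after which \eqref{499} follows exactly as you outline.
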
	
	\begin{proof}
		See Lemma 6 in \cite[pp.  736]{Guo2009}. 
	\end{proof}	
		
	\begin{Lem}\label{Lem44}
		For any smooth function $\chi_0(t,x)$ such that $\text{supp} \chi_0\subset\subset \Omega$, one has, up to a subsequence,
		 \begin{align}\label{412b}
			\lim_{k\to \infty}\|\chi_0\{Z_k-Z\}\|_{L^2_xL^2_D}^2 =0.
		\end{align}
	\end{Lem}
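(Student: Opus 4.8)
The plan is to obtain strong compactness of the macroscopic moments by a (localized) velocity averaging argument; since $\mathrm{supp}\,\chi_0\subset\subset\Omega$, no boundary term on $\pa\Omega$ ever enters. First I would split $Z_k=\P Z_k+\{\I-\P\}Z_k$ and recall $Z=\P Z$, i.e. $\{\I-\P\}Z=0$. By \eqref{49b} the micro part already satisfies $\int^1_0\|\{\I-\P\}Z_k(s)\|^2_{L^2_xL^2_D}\,ds\to 0$ along the subsequence, so it remains to prove
\[
\lim_{k\to\infty}\int^1_0\big\|\chi_0\{\P Z_k-\P Z\}(s)\big\|^2_{L^2_xL^2_D}\,ds=0 .
\]
Since $\P Z_k=\{a_k+b_k\cdot v+c_k|v|^2\}\mu^{1/2}$ with $(a_k,b_k,c_k)$ given by fixed linear velocity moments of $Z_k$ against the Schwartz weights $e\in\{\mu^{1/2},v_i\mu^{1/2},|v|^2\mu^{1/2}\}$, the claim reduces to the strong convergence of $\chi_0\<Z_k,e\>_{L^2_v}$ in $L^2((0,1)\times\Omega)$ for each such $e$.

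Second, I would write the transport equation satisfied by $Z_k$. From \eqref{49a} and $\P Z_k\in\ker L$ (so $LZ_k=L\{\I-\P\}Z_k$) one has
\[
\pa_t Z_k+v\cdot\na_x Z_k=-L\{\I-\P\}Z_k=:g_k .
\]
On any compact set $K\subset\R^3_v$, $\nu(v)\gtrsim 1$, so \eqref{27a}, \eqref{27b} and \eqref{defnu} give $|\cdot|_{L^2_v(K)}\lesssim|\cdot|_{L^2_D}$; hence the uniform bound $\sup_k\int^1_0\|Z_k(s)\|^2_{L^2_xL^2_D}\,ds<\infty$ (from \eqref{48}--\eqref{49}) makes $\{Z_k\}$ bounded in $L^2((0,1)\times\Omega;L^2_{loc}(\R^3_v))$. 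Moreover, combining the same embedding with \eqref{16a} and \eqref{nu2} (and \eqref{17} in the non-cutoff and Landau cases) shows $g_k\to 0$ strongly in $L^2((0,1)\times\Omega;H^{-1}_{loc}(\R^3_v))$ (in fact in $L^2_{loc}$ in the cutoff case).

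Third, I would invoke the local $L^2$ velocity averaging lemma (Golse--Lions--Perthame--Sentis; DiPerna--Lions--Meyer), in the variant allowing the source to lie in a negative-order Sobolev space in $v$: for every $\psi\in C^\infty_c(\R^3_v)$ the averages $\int_{\R^3}Z_k\psi\,dv$ are relatively compact in $L^2_{loc}((0,1)\times\Omega)$ --- a purely local statement, so $\pa\Omega$ is irrelevant. To pass from compactly supported $\psi$ to the Gaussian-weighted $e$'s, I would truncate $e=e\,\1_{|v|\le R}+e\,\1_{|v|>R}$ and bound the tail by Cauchy--Schwarz,
\[
\Big\|\int_{|v|>R}Z_k\,e\,dv\Big\|_{L^2_x}\le\big|\nu^{-1/2}e\,\1_{|v|>R}\big|_{L^2_v}\|Z_k\|_{L^2_xL^2_D}=:C_R\,\|Z_k\|_{L^2_xL^2_D},
\]
with $C_R\to 0$ as $R\to\infty$ since $\nu^{-1/2}e$ is Schwartz (the analogous estimate with $\<v\>^{-(\gamma+2)/2}$ via \eqref{27a} works in the other cases). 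Hence, along a further subsequence, $\chi_0\<Z_k,e\>_{L^2_v}$ converges strongly in $L^2((0,1)\times\Omega)$, and its limit must be $\chi_0\<Z,e\>_{L^2_v}$ because $Z_k\rightharpoonup Z$. Collecting the three reductions gives \eqref{412b}.

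The step I expect to be the main obstacle is the velocity averaging argument itself: the only a priori control on $\{Z_k\}$ lives in the \emph{degenerate} dissipation norm $L^2_D$ rather than in plain $L^2_v$, which forces the velocity localization before any averaging lemma can be applied and then requires the tail estimate above to recover the Schwartz-weighted moments that build $\P$; one must also check that $g_k=-L\{\I-\P\}Z_k$ lands in an admissible (negative-order-in-$v$) space, which works precisely because $L$ annihilates the hydrodynamic part. A minor point is the behaviour near $t=0,1$ when $\chi_0$ is not compactly supported in time: it is handled by the usual extension/trace estimates for bounded solutions of free transport, or simply by taking $\mathrm{supp}\,\chi_0\subset\subset(0,1)\times\Omega$ in the contradiction scheme.
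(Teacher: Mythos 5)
Your proposal follows essentially the same route as the paper: both reduce the claim to strong $L^2_{t,x}$ compactness of the hydrodynamic moments via the velocity averaging lemma applied to the transport equation satisfied by $Z_k$, both localize in velocity so that the degenerate $L^2_D$ control becomes a genuine $L^2_v$ bound before averaging, both recover the Gaussian-weighted moments by a tail estimate in $|v|$, and both dispose of the microscopic part through \eqref{49b}. Two technical differences are worth recording. First, the paper does not decompose $LZ_k$ nor invoke the $H^{-1}_v$-source variant of averaging: after multiplying by a cutoff $\chi_1(t,v)$ it treats the whole right-hand side $\{[\pa_t+v\cdot\na_x]\chi_0\chi_1\}Z_k-\chi_0\chi_1LZ_k$ as an $L^2$ source, which suffices in the cutoff Boltzmann setting where the lemma is actually used; your observation that $\P Z_k\in\ker L$ lets the source tend to zero, combined with the negative-order-in-$v$ averaging lemma, is correct and would be the natural form for the Landau/non-cutoff operators. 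Second---and this is the one place your write-up is too thin---the paper's cutoff $\chi_1$ also truncates in \emph{time}, and the contribution of the layers $\{s\le\eta_0\}\cup\{s\ge 1-\eta_0\}$ (together with $\{|v|\ge 1/\eta_0\}$) is shown to be of size $O(\eta_0)$ uniformly in $k$ by means of the energy bounds $\sup_{0\le t\le 1}\|Z_k(t)\|^2_{L^2_xL^2_D}\lesssim\|Z_k(0)\|^2_{L^2_xL^2_D}$ and $\|Z_k(0)\|^2_{L^2_xL^2_D}\lesssim\int^1_0\|Z_k(s)\|^2_{L^2_xL^2_D}\,ds\le C$, which are inherited from the equation. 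Your fallback of taking $\mathrm{supp}\,\chi_0\subset\subset(0,1)\times\Omega$ would only prove a weaker statement than what is used in \eqref{413}, where the time integral runs over all of $[0,1]$, and ``trace estimates for free transport'' is not quite the needed ingredient; what is needed is exactly this uniform-in-time $L^2$ bound, which you should state and use explicitly.
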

\begin{proof}
	Choose any $\eta_0>0$ and a smooth cutoff function $\chi_1(t,v)$ in $(0,1)\times\R^3_v$ such that $\chi_1(t,v)=1$ in $[\eta_0,1-\eta_0]\times\{|v|\le \frac{1}{\eta_0}\}$. Multiplying $\chi_0\chi_1$ with \eqref{49a}, we obtain 
	\begin{align*}
		[\pa_t+v\cdot\na_x]\{\chi_0\chi_1Z_k\} = \{[\pa_t+v\cdot \na_x]\chi_0\chi_1\}Z_k - \chi_0\chi_1LZ_k.
	\end{align*}
Since $\int^1_0\|Z_k\|^2_{L^2_xL^2_D}\,ds<\infty$, one has $\chi_0\chi_1Z_k\in L^2([0,1]\times\Omega\times\R^3))$ and $\{[\pa_t+v\cdot \na_x]\chi_0\chi_1\}Z_k - \chi_0\chi_1LZ_k\in L^2([0,1]\times\Omega\times\R^3))$. Then we deduce from the averaging lemma \cite{Diperna1989,Diperna1989a} that $$\int_{\R^3}\chi_0\chi_1Z_k(v)\phi(v)\,dv$$ are compact in $L^2([0,1]\times\Omega)$ for any smooth cutoff function $\phi(v)$. It then follows that 
\begin{align}\label{410}
	\int_{\R}\chi_0\chi_1Z_k(v)[1,v,|v|^2]\sqrt\mu\,dv
\end{align}
are compact in $L^2([0,1]\times\Omega)$. 
	On the other hand, using a similar argument to that for obtaining Lemma 8 in \cite[pp. 340]{Guo2003}, one has from \eqref{49a} that 
	\begin{align*}
		\sup_{0\le t\le 1}\|Z_k(t)\|_{L^2_xL^2_D}^2 &\lesssim \|Z_k(0)\|^2_{L^2_xL^2_D},\\
		\int^1_0\|Z_k(s)\|^2_{L^2_xL^2_D}\,ds&\ge C\|Z_k(0)\|_{L^2_xL^2_D}^2. 
		\end{align*}
	In view of this, \eqref{48} and \eqref{49}, for any exponentially decay function $e(v)$, we have  
	\begin{align*}
		&\quad\,\int^1_0\int_\Omega\Big(\int_{\R^3}\chi_0(1-\chi_1)(Z_k-Z)e(v)\,dv\Big)^2\,dxds\\
		&\lesssim \int^1_0\int_\Omega\int_{\R^3}(1-\chi_1)^2\big(|\chi_0Z_k|^2-|\chi_0Z|^2\big)e(v)\,dvdxds\\
		&\lesssim \Big(\int_{0\le s\le\eta_0}\int_{\Omega\times\R^3}+\int_{1-\eta_0\le s\le1}\int_{\Omega\times\R^3}+\int_0^1\int_{\Omega}\int_{|v|\ge \frac{1}{\eta_0}}\Big)\cdots\\
		&\lesssim \eta_0\sup_{0\le s\le 1}\int_{\Omega\times\R^3}\<v\>^{\gamma}\big(|\chi_0Z_k|^2-|\chi_0Z|^2\big)\,dvdx \lesssim \eta_0, 
	\end{align*}
for any $\eta_0>0$. 
Combining with the compactness of \eqref{410}, up to a subsequence, the macroscopic parts of $\chi_0Z_k$ satisfy $\chi_0\P Z_k\to \chi_0\P Z =\chi_0Z$ strongly in $L^2([0,1]\times\Omega\times\R^3)$. Therefore, in light of $\int^1_0\|(\I-\P)Z_k(s)\|^2_{L^2_xL^2_D}\,ds\to 0$ in \eqref{49b}, the remaining microscopic parts of $\chi_0Z_k$ satisfy $\lim_{k\to 0}\int^1_0\|\chi_0(\I-\P)Z_k(s)\|^2_{L^2_xL^2_D}\,ds=0$ and we conclude Lemma \ref{Lem44}. 	
\end{proof}

Next, as in \cite{Guo2009}, we will prove that there is no concentration at the boundary $\pa\Omega$ so that $Z_k\to Z$ strongly in $[0,1]\times\Omega\times\R^3$. Let 
\begin{align*}
	\Omega_{\ve^4} \equiv \{x\in\Omega: \xi(x)<-\ve^4\}. 
\end{align*}
To this end, we will establish a careful energy estimate near the boundary in the thin shell-like region $[0,1]\times\{\Omega\setminus\Omega_{\ve^4}\}\times\R^3$. Recall that  $n(x)=\frac{\na_x\xi}{|\na_x\xi|}\neq 0$ are well-defined and smooth 
on $\Omega\setminus\Omega_{\ve^4}$ for $\ve$ small. For $m>1/2$ and any $(x,v)$, we define the outward moving (inward moving) indicator function $\chi_+$ $(\chi_-)$ as 
\begin{align*}
	\chi_+(x,v) &= \1_{\Omega\setminus\Omega_{\ve^4}}(x)\1_{\{|v|\le \ve^{-m},v\cdot n(x)>\ve\}}(v),\\
	\chi_-(x,v) &= \1_{\Omega\setminus\Omega_{\ve^4}}(x)\1_{\{|v|\le \ve^{-m},v\cdot n(x)<-\ve\}}(v).
\end{align*}
The main strategy is to show that the moving (non-grazing) parts $\chi_\pm Z_k$ are controlled by the inner boundary values of $Z_k$ on $\pa\Omega_{\ve^4}=\{\xi(x)=-\ve^4\}$, which are further controlled by the (compact) interior parts of $Z_k$. Hence, no concentration is possible. 

\begin{Lem}\label{Lem45}
	\begin{align}\label{412a}
		\int^1_0\int_{\Omega\setminus\Omega_{\ve^4}}\int_{\substack{|v\cdot n|\le\ve\\\text{or }|v|\ge \ve^{-m}}}\<v\>^{\gamma}|Z_k(s,x,v)|^2\,dxdvds\le C\ve+\frac{C}{k}. 
	\end{align}
\end{Lem}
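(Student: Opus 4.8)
The plan is to bound the integral by splitting $Z_k$ into its hydrodynamic and purely microscopic components, $Z_k=\P Z_k+(\I-\P)Z_k$, and treating the two parts over the region $\{s\in[0,1],\ x\in\Omega\setminus\Omega_{\ve^4},\ |v\cdot n(x)|\le\ve\ \text{or}\ |v|\ge\ve^{-m}\}$ separately. The structural fact that drives the argument is that in each of the three collision models the weight $\<v\>^\gamma$ does not exceed the weight entering the dissipation norm, namely $\<v\>^\gamma\le\<v\>^\kappa$ and $\int_{\R^3}\<v\>^\kappa|g|^2\,dv\lesssim|g|_{L^2_D}^2$ (an equivalence up to constants in the cutoff Boltzmann case). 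Hence the microscopic part is disposed of at once by enlarging the domain of integration to all of $[0,1]\times\Omega\times\R^3$ and invoking the normalization \eqref{49}:
\[
\int_0^1\int_{\Omega}\int_{\R^3}\<v\>^\gamma|(\I-\P)Z_k|^2\,dvdxds\ \lesssim\ \int_0^1\|(\I-\P)Z_k(s)\|_{L^2_xL^2_D}^2\,ds\ \le\ \frac{1}{c_1k}.
\]

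It then remains to estimate the contribution of $\P Z_k=(a_k+b_k\cdot v+c_k|v|^2)\sqrt\mu$, whose coefficients obey $\int_0^1\|[a_k,b_k,c_k](s)\|_{L^2_x}^2\,ds\lesssim1$, since $|\P g|_{L^2_D}^2$ is comparable to $|[a,b,c]|^2$ and $\int_0^1\|\P Z_k(s)\|_{L^2_xL^2_D}^2\,ds=1$ by \eqref{48}. On the high-velocity piece $|v|\ge\ve^{-m}$ I would exploit the Gaussian factor: there $\<v\>^\gamma\le1$ and $\mu(v)\le(2\pi)^{-3/2}e^{-\ve^{-2m}/4}e^{-|v|^2/4}$, so $\int_{|v|\ge\ve^{-m}}\<v\>^\gamma|\P Z_k|^2\,dv\lesssim e^{-\ve^{-2m}/4}|[a_k,b_k,c_k](s,x)|^2$, and since $m>1/2$ this piece contributes $\lesssim e^{-\ve^{-2m}/4}\le C\ve$ for $\ve$ small. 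On the grazing piece $|v\cdot n(x)|\le\ve$, after rotating the velocity variable so that $n(x)=e_1$, the integral of the Gaussian-times-polynomial $|\P Z_k|^2$ over the slab $\{|v_1|\le\ve\}$ is $\lesssim\ve\,|[a_k,b_k,c_k](s,x)|^2$ uniformly in $x$, so this piece contributes $\lesssim\ve\int_0^1\|[a_k,b_k,c_k]\|_{L^2_x}^2\,ds\lesssim\ve$. Adding the three estimates yields the bound $C\ve+C/k$, with $C$ independent of $k$ and of small $\ve$.

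I do not expect a genuine obstacle in this lemma: the estimate works precisely because the two dangerous velocity regions are innocuous for a Gaussian profile — the grazing slab has thickness $\sim\ve$ and the high-velocity region carries a super-polynomially small factor — while the hydrodynamic coefficients are uniformly controlled in $L^2_x$ by \eqref{48} and the microscopic remainder is directly $O(1/k)$ in the dissipation norm by \eqref{49}. The only point deserving a moment of care is verifying, model by model, that $\<v\>^\gamma$ is dominated by the weight of $|\cdot|_{L^2_D}$, which is exactly what permits the $(\I-\P)Z_k$ term to be absorbed into $1/k$; incidentally the thin-shell restriction $x\in\Omega\setminus\Omega_{\ve^4}$ is not actually used in this estimate and is kept only for consistency with the subsequent no-concentration argument.
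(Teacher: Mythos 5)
Your proposal is correct and follows essentially the same route as the paper: split $Z_k=\P Z_k+(\I-\P)Z_k$, absorb the microscopic part into $C/k$ via \eqref{49}, control $\int_0^1\|[a_k,b_k,c_k]\|_{L^2_x}^2\,ds$ uniformly in $k$ from the normalization, and bound the inner $v$-integral of the Gaussian profile over the grazing slab $|v\cdot n|\le\ve$ (thickness $\sim\ve$ after splitting $v$ into normal and tangential components) and the tail $|v|\ge\ve^{-m}$ (super-polynomially small) uniformly in $x$. The only cosmetic difference is that the paper deduces the coefficient bound from the uniform boundedness of $\int_0^1\|Z_k\|^2_{L^2_xL^2_D}\,ds$ and linear independence of $[1,v,|v|^2]\sqrt\mu$, whereas you invoke \eqref{48} with $|\P g|_{L^2_D}\approx|[a,b,c]|$, which amounts to the same control.
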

\begin{proof}
Let $\P Z_k= \{a_k(t,x)+v\cdot b_k(t,x)+|v|^2c_k(t,x)\}\sqrt\mu$. Since $\sup_{k\ge 1}\int^1_0\|Z_k\|^2_{L^2_xL^2_D}ds$ is finite and $[1,v,|v|^2]\sqrt\mu$ are linearly independent, there exists $C>0$ (independent of $k$) such that 
\begin{align}\label{411}
	\int^1_0\|[a_k,b_k,c_k]\|^2_{L^2_x}\,ds \le C\int^1_0\|Z_k(s)\|^2_{L^2_xL^2_D}\,ds\le C.
\end{align}
By \eqref{49}, we can split:
\begin{align*}
	&\quad\,\int^1_0\int_{\Omega\setminus\Omega_{\ve^4}}\int_{\substack{|v\cdot n|\le\ve\\\text{or }|v|\ge \ve^{-m}}}\<v\>^{\gamma}|Z_k(s,x,v)|^2\,dxdvds\\
	&\le 	\int^1_0\int_{\Omega\setminus\Omega_{\ve^4}}\int_{\substack{|v\cdot n|\le\ve\\\text{or }|v|\ge \ve^{-m}}}\<v\>^{\gamma}\big(|\P Z_k(s,x,v)|^2+|(\I-\P) Z_k(s,x,v)|^2\big)\,dxdvds\\
	&\le \int^1_0\int_{\Omega\setminus\Omega_{\ve^4}}\big|[a_k,b_k,c_k]\big|^2\Big\{\int_{\substack{|v\cdot n|\le\ve\\\text{or }|v|\ge \ve^{-m}}}\<v\>^{\gamma+2}\mu\,dv\Big\}\,dxds + \frac{C}{k}. 
\end{align*}
We note that the inner $v$-integral above is bounded uniformly in $x$. In fact, by a change of variable $v_{\parallel}=\{v\cdot n(x)\}n(x)$ and $v_\perp=v-v_{\parallel}$ for $|v\cdot n(x)|\le \ve$, the inner integral is bounded by the sum of 
\begin{align*}
	&\int_{|v\cdot n(x)|\le \ve} \<v\>^{\gamma+2}\mu\,dv\le C\int^\ve_{-\ve}dv_{\parallel}\int_{\R^2}e^{-|v_\perp|^2/8}\,dv_\perp\le C\ve\\
		&\text{ and }\int_{|v|\ge \ve^{-m}}\<v\>^{\gamma+2}\mu\,dv\le C\ve. 
\end{align*}
The Lemma \eqref{Lem45} thus follows from \eqref{411}. 
\end{proof}

To study the non-grazing parts $\chi_\pm Z_k$, we apply the same argument in \cite[eq. (99), pp. 745]{Guo2009} to obtain that 
\begin{align}\label{412}
	\int_{\Omega\setminus\Omega_{\ve^4}}\int_{\substack{|v\cdot n|\ge\ve\\|v|\le \ve^{-m}}}|Z_k(s,x,v)|^2dxdv\le C\ve. 
\end{align}

\begin{proof}
	[Proof of Lemma \ref{Lem42}]
	We are now ready to prove compactness of $Z_k$. We split 
	\begin{multline}\label{413}
		\int^1_0\int_{\Omega}\int_{\R^3}\<v\>^{\gamma}|Z_k(s,x,v)-Z(s,x,v)|^2dsdxdv\\ = 
		2\int^1_0\int_{\Omega\setminus\Omega_{\ve^4}}\int_{\substack{|v\cdot n|\le\ve\\\text{or }|v|\ge \ve^{-m}}}\<v\>^{\gamma}|Z_k(s,x,v)|^2dsdxdv 
		+ 2\int^1_0\int_{\Omega\setminus\Omega_{\ve^4}}\int_{\substack{|v\cdot n|\ge\ve\\|v|\le \ve^{-m}}}\<v\>^{\gamma}|Z_k(s,x,v)|^2dsdxdv\\
		+ 2\int^1_0\int_{\Omega\setminus\Omega_{\ve^4}}\int_{\R^3}\<v\>^{\gamma}|Z(s,x,v)|^2dsdxdv+ \int^1_0\int_{\Omega_{\ve^4}}\int_{\R^3}\<v\>^{\gamma}|Z_k(s,x,v)-Z(s,x,v)|^2dsdxdv. 
	\end{multline}
Clearly, using \eqref{412a}, the first right-hand term of \eqref{413} is bounded by $C\ve+\frac{C}{k}$. The second term on the right-hand of \eqref{413} is bounded by $C\ve$ according to \eqref{412} and $\gamma<0$. By Lemma \ref{Lem43}, the third right-hand term of \eqref{413} is bounded by 
\begin{align*}
	 C|\Omega\setminus\Omega_{\ve^4}|\le C\ve,
\end{align*}
where $C$ depends on $a_0,c_0,c_1,c_2,b_0,b_1$ and $\varpi$. Here $|\Omega\setminus\Omega_{\ve^4}|$ means the Lebesgue measure of set $\Omega\setminus\Omega_{\ve^4}$. The last term of \eqref{413} tends to zero as $k\to\infty$ by \eqref{412b} and $\gamma<0$. We hence deduce the strong convergence 
\begin{align}\label{414}
	\limsup_{k\to \infty}\int^1_0\int_{\Omega}\int_{\R^3}\<v\>^{\gamma}|Z_k(s,x,v)-Z(s,x,v)|^2dsdxdv = 0,
\end{align}
by letting $\ve\to 0$. From our normalization \eqref{48}, we know that 
\begin{equation}\label{414a}
	\int^1_0\|Z(s)\|^2_{L^2_xL^2_D}ds = \lim_{k\to\infty}\int^1_0\|Z_k(s)\|^2_{L^2_xL^2_D}=1. 
\end{equation}

Next, we study the boundary conditions which $Z$ satisfies. In fact, $Z_k$ and $Z$ satisfy 
\begin{align*}
	\{\pa_t+v\cdot\na_x\}\Big\{\1_{\{|v|\le \ve^{-m}\}}(Z_k-Z)\Big\}=-\1_{\{|v|\le \ve^{-m}\}}L\{\I-\P\}Z_k. 
\end{align*}
Using Ukai's trace theorem \cite[Theorem 5.1.1]{Ukai1986}, \eqref{414} and \eqref{49b}, for any fixed $\ve>0$, we have 
\begin{align*}
	&\quad\,\lim_{k\to\infty}\int^1_0\big\|\1_{\{|v\cdot n(x)|\ge \frac{\ve}{2},|v|\le \ve^{-m}\}}(Z_k-Z)\big\|^2_{\gamma}ds\\
	&\le C\lim_{k\to\infty}\Big[\int^1_0\big\|\1_{\{|v|\le \ve^{-m}\}}(Z_k-Z)\big\|^2_{L^2_xL^2_v}ds
	+ \int^1_0\big\|\1_{\{|v|\le \ve^{-m}\}}L\{\I-\P\}Z_k(s)\big\|^2_{L^2_xL^2_v}ds\Big]\\
	&\le C\lim_{k\to\infty}\int^1_0\big\|\{\I-\P\}Z_k(s)\big\|^2_{L^2_xL^2_D}ds = 0.
\end{align*}
Thus, for the inflow boundary condition, by \eqref{49}, 
\begin{align*}
	\int^1_0\big\|\1_{\{|v\cdot n(x)|\ge \frac{\ve}{2},|v|\le \ve^{-m}\}}Z(s)\big\|^2_{\gamma}ds = \lim_{k\to\infty}\int^1_0\big\|\1_{\{|v\cdot n(x)|\ge \frac{\ve}{2},|v|\le \ve^{-m}\}}Z_k(s)\big\|^2_{\gamma}ds= 0.
\end{align*}
This is valid for $\ve>0$. Thus, $Z\equiv 0$ on $\gamma$. 

Finally, as in \cite[pp. 747]{Guo2009}, for any $t$ and $x\in\pa\Omega$, and $v\in\R^3$, by comparing the coefficients in front of polynomials of $v$ in \eqref{499} and $Z=0$ on $\gamma$. We deduce that 
\begin{equation*}
	\Big\{\frac{c_0}{2}|x|^2-b_0\cdot x+a_0\Big\}=\big\{-c_0tx-c_1x+\varphi\times x+b_0t+b_1\big\}\times v=\Big\{\frac{c_0t^2}{2}+c_1t+c_2\Big\}|v|^2=0. 
\end{equation*}
Therefore, $c_0=c_1=c_2=0$, and $b_0=0$. Then $a_0=0$ and $\varpi\times x+b_1\equiv 0$, or 
\begin{align*}
	\varpi^2x_3-\varpi^3x_2+b^1_1=-\varpi^1x_3+\varpi^3x_1+b^2_1=\varpi^1x_2-\varpi^2x_1+b^3_1\equiv 0,
\end{align*}
for all $x\in\pa\Omega$. Notice that $\xi(x)=0$ is two dimensional, so we may assume that $(x_1,x_2)$ are (locally) independent. Hence $\varpi^1=\varpi^2=b^3_1=0$ then $\varpi^3=b^2_1=0$, and finally $b^1_1=0$. Therefore, we deduce $Z\equiv 0$ and this contradicts with \eqref{414a}, and we conclude Lemma \ref{Lem42}. 
\end{proof}

\begin{Thm}\label{Thm41}
	Let $f(t,x,v)\in L^2_{x,v}$ be the (unique) solution to the cutoff Boltzmann equation \eqref{1a} with inflow boundary condition \eqref{2} and trace $f|_{\gamma}\in L^2_{\text{loc}}(\R_+;L^2(\gamma))$. Then there are $\lam>0$ and $C>0$ such that for all $0\le t\le \infty$, 
	\begin{align*}
		e^{2\lam t}\|f(t)\|^2_{L^2_{x,v}} \le 4\Big\{\|f_0\|_{L^2_{x,v}}-\int^t_0e^{2\lam s}\int_{v\cdot n<0}\int_{\R^3}v\cdot n|f(s)|^2dvdS(x)ds\Big\}. 
	\end{align*}
\end{Thm}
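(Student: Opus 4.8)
The plan is to run two coupled energy identities — the bare $L^2$ one for \eqref{1a} and the one for the weighted unknown $Wf$ — to close them on unit time intervals with the help of the time-averaged macroscopic coercivity of Lemma~\ref{Lem42}, and then to iterate. The reason the weight $W$ of \eqref{W} is unavoidable is that for soft potentials the bare identity only produces the degenerate dissipation $\|\{\I-\P\}f\|_{L^2_xL^2_D}$, which does not dominate $\|f\|_{L^2_{x,v}}$. Concretely, pairing \eqref{1a} with $f$ over $\Omega\times\R^3$, using Green's formula for $v\cdot\na_x$ (legitimate since $f|_\gamma\in L^2_{\mathrm{loc}}$), the inflow condition $f|_{\gamma_-}=g$, and \eqref{e21}, gives for $0\le s_1\le s_2$
\begin{align*}
\|f(s_2)\|_{L^2_{x,v}}^2+\int_{s_1}^{s_2}\Big(\|f\|_{\gamma_+}^2+2c_1\|\{\I-\P\}f\|_{L^2_xL^2_D}^2\Big)\,ds\le\|f(s_1)\|_{L^2_{x,v}}^2+\int_{s_1}^{s_2}\|g\|_{\gamma_-}^2\,ds;
\end{align*}
pairing the weighted equation \eqref{exdiss} (with $q>0$) with $Wf$ and using $|(W^2Lf,f)_{L^2_v}|\le C_1|f|_{L^2_D}^2$ from \eqref{e22} gives
\begin{align*}
\|Wf(s_2)\|_{L^2_{x,v}}^2+\int_{s_1}^{s_2}\Big(\|Wf\|_{\gamma_+}^2+2q\big\|\,|v|\<v\>^{-1/2}Wf\big\|_{L^2_{x,v}}^2\Big)ds\le\|Wf(s_1)\|_{L^2_{x,v}}^2+\int_{s_1}^{s_2}\Big(2C_1\|f\|_{L^2_xL^2_D}^2+\|Wg\|_{\gamma_-}^2\Big)ds.
\end{align*}
By \eqref{Wul} the functional $\E(t):=\|f(t)\|_{L^2_{x,v}}^2+\kappa\|Wf(t)\|_{L^2_{x,v}}^2$ is equivalent to $\|f(t)\|_{L^2_{x,v}}^2$ for every small $\kappa>0$.

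On $[m,m+1]$ I would add $\kappa\times$(second identity) to the first, split $\|f\|_{L^2_xL^2_D}^2=\|\P f\|_{L^2_xL^2_D}^2+\|\{\I-\P\}f\|_{L^2_xL^2_D}^2$, and absorb (for $\kappa$ small) the $(\I-\P)$-part of the $W$-error into the microscopic dissipation; the macroscopic remainder $2\kappa C_1\int_m^{m+1}\|\P f\|_{L^2_xL^2_D}^2\,ds$ is controlled by Lemma~\ref{Lem42} (shifted to $[m,m+1]$ by time translation) in terms of $c_1\int_m^{m+1}\|\{\I-\P\}f\|_{L^2_xL^2_D}^2\,ds$ and $\int_m^{m+1}\int_{\pa\Omega}\int_{\R^3}|v\cdot n||f|^2$, both of which are absorbed after a further shrinking of $\kappa$ (the $\gamma_+$ flux into the boundary terms already on the left). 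The weight now delivers the missing coercivity: since $\<v\>^{1/2}\lesssim\<v\>^{\gamma/2}+|v|\<v\>^{-1/2}$ and $W\gtrsim1$, one gets $\|\{\I-\P\}f\|_{L^2_xL^2_v}^2\lesssim\|\{\I-\P\}f\|_{L^2_xL^2_D}^2+\big\|\,|v|\<v\>^{-1/2}Wf\big\|_{L^2_{x,v}}^2+\|\P f\|_{L^2_xL^2_D}^2$, and combined with $\|\P f\|_{L^2_xL^2_v}^2\approx\|\P f\|_{L^2_xL^2_D}^2$ and one more use of Lemma~\ref{Lem42}, this bounds $\int_m^{m+1}\|f\|_{L^2_{x,v}}^2\,ds$ by the surviving dissipation plus $\int_m^{m+1}\|g\|_{\gamma_-}^2$. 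Since the bare identity also gives $\E(m+1)\lesssim\int_m^{m+1}\|f\|_{L^2_{x,v}}^2\,ds+\int_m^{m+1}\|g\|_{\gamma_-}^2\,ds$, I obtain a one-step contraction $\E(m+1)\le\theta\,\E(m)+C\int_m^{m+1}\|g\|_{\gamma_-}^2\,ds$ with $0<\theta<1$. Iterating, choosing $\lam$ so that $e^{2\lam}=\theta^{-1}$, interpolating between integers via the bare identity, and fixing $\kappa$ small once and for all so that $\frac{1}{4}\|f(t)\|_{L^2_{x,v}}^2\le\E(t)$ and $\E(0)\le\|f_0\|_{L^2_{x,v}}^2$, I get exactly the stated bound, using $f|_{\gamma_-}=g$ to rewrite $\|g\|_{\gamma_-}^2=-\int_{v\cdot n<0}\int_{\R^3}v\cdot n|f|^2\,dv\,dS(x)$. (Alternatively, the pointwise-in-time macroscopic functional $\E_{int}$ of Lemma~\ref{LemMarco} leads directly to a differential inequality $\pa_t\E+2\lam\E\le-\int_{v\cdot n<0}\int_{\R^3}v\cdot n|f|^2$; both routes reach the same conclusion.)

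The crux — and the main obstacle — is the soft-potential degeneracy: the only dissipation available for free is $\|\{\I-\P\}f\|_{L^2_xL^2_D}$, which for $\gamma<0$ is strictly weaker than $\|\{\I-\P\}f\|_{L^2_xL^2_v}$ at large velocities, so no exponential rate can be extracted from the energy identity by itself. The repair is the transport commutator of the space--velocity weight $W$, which injects the non-degenerate dissipation $q|v|^2\<v\>^{-1}|Wf|^2$; the delicate part is the two-parameter smallness bookkeeping — first absorbing the $W$-generated error $C_1\|f\|_{L^2_xL^2_D}^2$, then absorbing the residual macroscopic contribution through Lemma~\ref{Lem42} — carried out so that enough coercivity survives to run the contraction and to produce the explicit constant $4$. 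A secondary technical point is that $f$ is merely a mild solution, so every boundary integral must be justified by Green's identity for free transport via the hypothesis $f|_\gamma\in L^2_{\mathrm{loc}}(\R_+;L^2(\gamma))$.
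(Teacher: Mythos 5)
Your proposal is correct and essentially reproduces the paper's own argument: the coupled unweighted and $W$-weighted energy identities (with the extra non-degenerate dissipation $q|v|^2\<v\>^{-1}|Wf|^2$, and the weighted error $C_1\|f\|^2_{L^2_xL^2_D}$ absorbed by taking $\kappa$ small together with Lemma \ref{Lem42} applied to the time-translated solutions $f_k(s)=f(k+s)$ on unit intervals), followed by an iteration over $[k,k+1)$ — the paper simply inserts the factor $e^{\lam t}$ into the identities and sums the unit-interval estimates, which is the same discrete Gronwall step as your one-step contraction $\E(m+1)\le\theta\,\E(m)+C\int_m^{m+1}\|g\|^2_{\gamma_-}\,ds$. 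The only loose point is cosmetic rather than structural: $\E(0)\le\|f_0\|^2_{L^2_{x,v}}$ is false as written (since $\E(0)=\|f_0\|^2+\kappa\|Wf_0\|^2$) and the contraction–iteration produces a generic constant in front of the boundary-flux term, so recovering the literal factor $4$ of the statement requires the paper's explicit bookkeeping (choosing $\kappa W\le 1$, $\delta_0<\min\{1,Mc_1\}$ and $\lam$ so small that $e^{2\lam(t-N)}(1+\delta_0)\le 2$).
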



\begin{proof}
	[Proof of Theorem \ref{Thm41}]
	For any solution $f$ to the linearized Boltzmann equation \eqref{1a}, $e^{\lam t}Wf(t)$ satisfies 
	\begin{align}\label{43}
		\pa_t(e^{\lam t}Wf) + v\cdot \na_x (e^{\lam t}Wf) + q|v|^2\<v\>^{-1}Wf - WL(e^{\lam t}f) - \lam e^{\lam t}Wf = 0.
	\end{align}
For any $t>0$, let $0\le N\le t\le N+1$ with $N$ being an integer. We split $[0,t] = [0,N]\cup[N,t]$. For the time interval $[N,t]$, we let $\lam=0$ and take the inner product of \eqref{43} with $f$ over $[N,t]\times\Omega\times\R^3$ to obtain 
\begin{multline}\label{45a}
	\|f\|^2_{L^2_{x,v}} + 2c_1\int^t_N\|\{\I-\P\}f\|^2_{L^2_{x}L^2_D}\,ds + \int^t_N\int_{v\cdot n>0}\int_{\R^3}v\cdot n|f(s)|^2dvdS(x)ds \\\le  \|f(N)\|^2_{L^2_{x,v}} - \int^t_N\int_{v\cdot n<0}\int_{\R^3}v\cdot n|f(s)|^2dvdS(x)ds, 
\end{multline}when $q=0$. 
Here we apply \eqref{e21} and \eqref{e22}. 
For the time interval $[0,N]$ (we may assume $N\ge 1)$, we can take the inner product of \eqref{43} with $e^{\lam t}Wf$ over $[0,N]\times\Omega\times\R^3$ to obtain 
\begin{multline}\label{45c}
	e^{2\lam N}\|f(N)\|^2_{L^2_{x,v}} + 2c_1\int^N_0e^{2\lam s}\|\{\I-\P\}f\|^2_{L^2_{x}L^2_D}\,ds - \lam \int^N_0e^{2\lam s}\|f(s)\|_{L^2_{x,v}}^2\,ds\\ 
	+ \int^N_0\int_{v\cdot n>0}\int_{\R^3}v\cdot ne^{2\lam s}|f(s)|^2dvdS(x)ds\\ \le  \|f(0)\|^2_{L^2_{x,v}} - \int^N_0\int_{v\cdot n<0}\int_{\R^3}v\cdot ne^{2\lam s}|f(s)|^2dvdS(x)ds, 
\end{multline}
when $q=0$, and 
\begin{multline}\label{45d}
	e^{2\lam N}\|Wf(N)\|^2_{L^2_{x,v}} + 2q\int^N_0e^{2\lam s}\||v|\<v\>^{-\frac{1}{2}}Wf\|^2_{L^2_{x}L^2_D}\,ds - \lam \int^N_0e^{2\lam s}\|Wf(s)\|_{L^2_{x,v}}^2\,ds\\ 
	+ \int^N_0\int_{v\cdot n>0}\int_{\R^3}v\cdot ne^{2\lam s}|Wf(s)|^2dvdS(x)ds\\ \le C_1\int^N_0e^{2\lam s}\|f\|^2_{L^2_{x}L^2_D}\,ds + \|Wf(0)\|^2_{L^2_{x,v}} - \int^N_0\int_{v\cdot n<0}\int_{\R^3}v\cdot ne^{2\lam s}|Wf(s)|^2dvdS(x)ds, 
\end{multline}
when $q>0$. 
Dividing the time interval into $\cup^{N-1}_{k=0}[k,k+1)$ and letting $f_k(s,x,v)\equiv f(k+s,x,v)$ for $k=0,1,\dots,N-1$, we deduce from \eqref{45c} and \eqref{45d} that 
\begin{multline}\label{45e}
	e^{2\lam N}\|f(N)\|^2_{L^2_{x,v}} + \sum^{N-1}_{k=0}\int^1_0\Big(2c_1e^{2\lam (k+s)}\|\{\I-\P\}f_k(s)\|^2_{L^2_{x}L^2_D}\,ds - \lam e^{2\lam (k+s)}\|f_k(s)\|_{L^2_{x,v}}^2\Big)\,ds \\
	+ \sum^{N-1}_{k=0}\int^1_0\int_{v\cdot n>0}\int_{\R^3}v\cdot ne^{2\lam (k+s)}|f_k(s)|^2dvdS(x)ds \\\le  \|f(0)\|^2_{L^2_{x,v}} - \sum^{N-1}_{k=0}\int^1_0\int_{v\cdot n<0}\int_{\R^3}v\cdot ne^{2\lam (k+s)}|f_k(s)|^2dvdS(x)ds, 
 \end{multline}
when $q=0$, and 
\begin{multline}\label{45f}
e^{2\lam N}\|Wf(N)\|^2_{L^2_{x,v}} + \sum^{N-1}_{k=0}\int^1_0\Big(2qe^{2\lam (k+s)}\||v|\<v\>^{-\frac{1}{2}}Wf_k(s)\|^2_{L^2_{x}L^2_v}\,ds - \lam e^{2\lam (k+s)}\|Wf_k(s)\|_{L^2_{x,v}}^2\Big)\,ds \\
+ \sum^{N-1}_{k=0}\int^1_0\int_{v\cdot n>0}\int_{\R^3}v\cdot ne^{2\lam (k+s)}|Wf_k(s)|^2dvdS(x)ds \\\le 
C_1\sum_{k=0}^{N-1}\int^1_0e^{2\lam (k+s)}\|f_k(s)\|^2_{L^2_{x}L^2_D}\,ds
 + \|Wf(0)\|^2_{L^2_{x,v}}\\ - \sum^{N-1}_{k=0}\int^1_0\int_{v\cdot n<0}\int_{\R^3}v\cdot ne^{2\lam (k+s)}|Wf_k(s)|^2dvdS(x)ds, 
\end{multline}
when $q>0$. 
Let $\delta_0>0$ be chosen later. Multiplying $\delta_0e^{2\lam k}$ with \eqref{44} to each $f_k(s,x,v)$ and then summing up over $k$ yields 
\begin{multline}\label{46}
\frac{\delta_0}{M}\sum^{N-1}_{k=0}e^{2\lam k}\int^1_0\|\P f_k(s)\|_{L^2_xL^2_D}^2\,ds\le \delta_0\sum^{N-1}_{k=0}e^{2\lam k}\Big\{c_1\int^1_0\|(\I-\P) f_k(s)\|_{L^2_xL^2_D}^2\,ds \\ + \int^1_0\int_{\pa\Omega}\int_{\R^3}|v\cdot n||f(s)|^2dvdS(x)ds\Big\}. 
\end{multline}
Substituting \eqref{46} into \eqref{45e} and choosing $\delta_0<\min\{1,Mc_1\}$, we deduce that 
\begin{multline}\label{45g}
	e^{2\lam N}\|f(N)\|^2_{L^2_{x,v}} + \sum^{N-1}_{k=0}\frac{\delta_0e^{2\lam k}}{M}\int^1_0\| f_k(s)\|_{L^2_xL^2_D}^2\,ds- \lam \sum^{N-1}_{k=0}\int^1_0e^{2\lam (k+s)}\|f_k(s)\|_{L^2_{x,v}}^2\,ds\\
	 + \sum^{N-1}_{k=0}\Big\{
	 (1-\delta_0)\int^1_0\int_{v\cdot n>0}\int_{\R^3}v\cdot ne^{2\lam (k+s)}|f_k(s)|^2dvdS(x)ds
	  \\
	\le  \|f(0)\|^2_{L^2_{x,v}} - (1+\delta_0)\sum^{N-1}_{k=0}\int^1_0\int_{v\cdot n<0}\int_{\R^3}v\cdot ne^{2\lam (k+s)}|f_k(s)|^2dvdS(x)ds.
\end{multline}
Then we apply \eqref{45g} to eliminate the first right-hand term of \eqref{45f}. 
Taking the linear combination $\kappa\times\eqref{45f}+\eqref{45g}$ with $\kappa>0$ sufficiently small satisfying $\kappa <\frac{\delta_0}{2MC_1e^{2\lam}}$ and $\kappa W(x,v)\le 1$ for any $x$ and $v$, one has 
\begin{multline}\label{45i}
	e^{2\lam N}\|f(N)\|^2_{L^2_{x,v}} + \sum^{N-1}_{k=0}\Big\{\frac{\delta_0e^{2\lam k}}{2M}\int^1_0\| f_k(s)\|_{L^2_xL^2_D}^2\,ds
	+2\kappa q\int^1_0e^{2\lam (k+s)}\||v|\<v\>^{-\frac{1}{2}}Wf_k(s)\|^2_{L^2_{x}L^2_v}\,ds\Big\}\\ -  \sum^{N-1}_{k=0}\int^1_02e^{2\lam (k+s)}\lam\|f_k(s)\|_{L^2_{x,v}}^2\,ds 
	\\
	+ 	(1-\delta_0)\sum^{N-1}_{k=0}
\int^1_0\int_{v\cdot n>0}\int_{\R^3}v\cdot ne^{2\lam (k+s)}|f_k(s)|^2dvdS(x)ds
	\\
	\le  2\|f(0)\|^2_{L^2_{x,v}} - 2(1+\delta_0)\sum^{N-1}_{k=0}\int^1_0\int_{v\cdot n<0}\int_{\R^3}v\cdot ne^{2\lam (k+s)}|f_k(s)|^2 dvdS(x)ds.
\end{multline}
Here we can choose $\lam>0$ small enough to control the negative term on the left-hand side of \eqref{45i}. 
Noticing $\|f_k\|_{L^2_{x,v}}\le C_1\big(\|f_k\|_{L^2_xL^2_D}+\||v|\<v\>^{-\frac{1}{2}}Wf_k\|_{L^2_xL^2_v}\big)$ for some $C_1$, we choose $\lam>0$ in \eqref{45i} sufficiently small such that $2C_1e^{2\lam}\lam \le \min\{\frac{\delta_0e^{2\lam k}}{4M},\kappa q\}$ so as to obtain 
\begin{multline}\label{45h}
	e^{2\lam N}\|f(N)\|^2_{L^2_{x,v}} + \sum^{N-1}_{k=0}\Big\{\frac{\delta_0e^{2\lam k}}{4M}\int^1_0\| f_k(s)\|_{L^2_xL^2_D}^2\,ds
	+\kappa q\int^1_0e^{2\lam (k+s)}\||v|\<v\>^{-\frac{1}{2}}Wf_k(s)\|^2_{L^2_{x}L^2_v}\,ds\Big\}
	\\
	+ 	(1-\delta_0)\sum^{N-1}_{k=0}
	\int^1_0\int_{v\cdot n>0}\int_{\R^3}v\cdot ne^{2\lam (k+s)}|f_k(s)|^2dvdS(x)ds
	\\
	\le  2\|f(0)\|^2_{L^2_{x,v}} - 2(1+\delta_0)\sum^{N-1}_{k=0}\int^1_0\int_{v\cdot n<0}\int_{\R^3}v\cdot ne^{2\lam (k+s)}|f_k(s)|^2 dvdS(x)ds.
\end{multline}
Notice that $e^{2\lam t}=e^{2\lam (t-N)}e^{2\lam s}$ for $s\ge N$, and since $t\le N+1$, we can choose  $\lam>0$ small such that $e^{2\lam (t-N)}(1+\delta_0)\le 2$. Hence, multiplying $e^{2\lam t}$ with \eqref{45a} and combining it with \eqref{45h} yields 
\begin{align*}
&\quad\,e^{2\lam t}\|f\|^2_{L^2_{x,v}}  + e^{2\lam t}\int^t_N\int_{v\cdot n>0}\int_{\R^3}v\cdot n|f(s)|^2dvdS(x)ds \\
&\le  e^{2\lam (t-N)}e^{2\lam N}\|f(N)\|^2_{L^2_{x,v}} - e^{2\lam t}\int^t_N\int_{v\cdot n<0}\int_{\R^3}v\cdot n|f(s)|^2dvdS(x)ds\\
&\le 4\|f(0)\|^2_{L^2_{x,v}} - 4\sum^{N-1}_{k=0}\int^1_0\int_{v\cdot n<0}\int_{\R^3}v\cdot ne^{2\lam (k+s)}|f_k(s)|^2 dvdS(x)ds\\
&\qquad\qquad\qquad\qquad\qquad - e^{2\lam (t-N)}\int^t_N\int_{v\cdot n<0}\int_{\R^3}v\cdot ne^{2\lam s}|f(s)|^2dvdS(x)ds\\ 
&\le 4\|f(0)\|^2_{L^2_{x,v}} - 4\int^t_0\int_{v\cdot n<0}\int_{\R^3}v\cdot ne^{2\lam s}|f(s)|^2 dvdS(x)ds. 
\end{align*}
This completes the proof of Theorem \ref{Thm41}. 
\end{proof}

Let $w=(1+\rho^2|v|^2)^\beta$ with $\rho>0$ and $\beta\in\R$. 
Here we list the theory of $L^\infty$ decay for the weighted $h=wWf$ $(k\ge 0)$ of the linear cutoff Boltzmann equation \eqref{1} with the inflow boundary condition \eqref{2} satisfing \eqref{prioricutoff}. We consider the homogeneous transport equation 
\begin{align}\label{51}
	\{\pa_t+v\cdot\na_x+\nu\}f=0, \quad f(0,x,v)=f_0(x,v),\quad f|_{\gamma_-}=g,
\end{align}
with the inflow datum $g$ satisfying \eqref{prioricutoff}.
By \eqref{W1}, we can rewrite \eqref{51} in the form of $h=wWf$ $(k\ge 0)$ as 
\begin{align}\label{52}
	\{\pa_t+v\cdot\na_x+q{|v|^2}\<v\>^{-1}+\nu\}h=0, \quad h(0,x,v)=wWf_0(x,v),\quad h|_{\gamma_-}=wWg. 
\end{align} 
Fix $q>0$. Since $\nu(v)\approx \<v\>^{\gamma}$, there exist $\nu_0,\nu_1>0$ such that 
\begin{align*}
	\nu_0\<v\>\le q{|v|^2}\<v\>^{-1}+\nu(v)\le \nu_1\<v\>.
\end{align*} Then we can regard  $q{|v|^2}\<v\>^{-1}+\nu$ as a whole dissipation term and its behavior is quite similar to $\nu(v)$ in the hard potential case. In view of this, we can apply similar arguments in \cite[Lemma 12, Lemma 13 and Theorem 6]{Guo2009} to obtain the following lemmas and theorem. 

\begin{Lem}\label{Lem51}
	Let $f_0\in L^\infty(\Omega\times\R^3)$ and $Wg\in L^\infty(\gamma_-\times\R^3)$. Let $G(t)$ be the solution to equation \eqref{52}
	\begin{align*}
		\{\pa_t+v\cdot\na_x+q{|v|^2}\<v\>^{-1}+\nu\}G(t)h_0=0, \quad G(0)h_0=h_0(x,v),\quad \{G(t)h\}|_{\gamma_-}=wWg.
	\end{align*}
	For $(x,v)\notin\gamma_0\cup \gamma_-$, 
	\begin{align*}
		\{G(t)h_0\}(t,x,v)&=\1_{t-t_{\b}\le 0}e^{-(q{|v|^2}\<v\>^{-1}+\nu(v))t}h_0(x-tv,v)\\
		&\qquad+\1_{t-t_\b>0}e^{-(q{|v|^2}\<v\>^{-1}+\nu(v))t_\b}\{wWg\}(t-t_\b,x-t_\b v,v),
	\end{align*}
where $t_\b(x,v)$ is the backward exit time defined in \eqref{backward}. Moreover, 
\begin{align*}
	\sup_{t\ge 0}e^{\nu_0t}\|G(t)h_0\|_{L^\infty(\Omega\times\R^3)}\le \|h_0\|_{L^\infty_{x,v}}+\sup_{s\ge 0}e^{\nu_0s}\|wWg(s)\|_{L^\infty(\gamma_-\times\R^3)}.
\end{align*}

\end{Lem}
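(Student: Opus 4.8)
The plan is to solve \eqref{52} by the method of characteristics and then read off both the representation formula and the decay estimate from the resulting Duhamel identity, following the scheme of \cite[Lemma 12, Lemma 13 and Theorem 6]{Guo2009}. Since $q|v|^2\<v\>^{-1}+\nu(v)$ is independent of $t$ and $x$, the transport part reduces to a scalar ODE along each straight characteristic.

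First I would fix $(x,v)\notin\gamma_0\cup\gamma_-$ and run the backward characteristic $s\mapsto x-(t-s)v$ coming out of $[X(t;t,x,v),V(t;t,x,v)]=[x,v]$. Along it, $h$ obeys
\[
\frac{d}{ds}\,h\big(s,x-(t-s)v,v\big) = -\big(q|v|^2\<v\>^{-1}+\nu(v)\big)\,h\big(s,x-(t-s)v,v\big),
\]
so $h$ is simply multiplied by the decreasing exponential $e^{-(q|v|^2\<v\>^{-1}+\nu(v))\tau}$ over a time lapse $\tau$. The only point at which this characteristic can leave $\Omega$ backward in time is $\pa\Omega$, reached at the backward exit time $t_\b(x,v)$ from \eqref{backward} at the point $x_\b=x-t_\b v\in\pa\Omega$ with $v\cdot n(x_\b)\le 0$. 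Hence there are exactly two cases: if $t\le t_\b$ the characteristic reaches the initial slice $\{s=0\}$ first, giving $\{G(t)h_0\}(t,x,v)=e^{-(q|v|^2\<v\>^{-1}+\nu(v))t}h_0(x-tv,v)$; if $t>t_\b$ it hits $\gamma_-$ at time $t-t_\b>0$, giving $\{G(t)h_0\}(t,x,v)=e^{-(q|v|^2\<v\>^{-1}+\nu(v))t_\b}\{wWg\}(t-t_\b,x-t_\b v,v)$. This is the asserted formula.

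For the $L^\infty$ bound I would use the two-sided comparison $\nu_0\<v\>\le q|v|^2\<v\>^{-1}+\nu(v)\le\nu_1\<v\>$ recorded just before the statement, in particular $q|v|^2\<v\>^{-1}+\nu(v)\ge\nu_0$. In the first case $|\{G(t)h_0\}(t,x,v)|\le e^{-\nu_0 t}\|h_0\|_{L^\infty_{x,v}}$ immediately. In the second case I would bound $e^{-(q|v|^2\<v\>^{-1}+\nu(v))t_\b}\le e^{-\nu_0 t_\b}$ and, using that the boundary value is evaluated at time $t-t_\b\ge 0$, estimate $|\{wWg\}(t-t_\b,\cdot,v)|\le e^{-\nu_0(t-t_\b)}\sup_{s\ge 0}e^{\nu_0 s}\|wWg(s)\|_{L^\infty(\gamma_-\times\R^3)}$; multiplying the two exponentials recombines the factor $e^{-\nu_0 t_\b}e^{-\nu_0(t-t_\b)}=e^{-\nu_0 t}$. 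Taking the supremum over $(x,v)$ and then multiplying by $e^{\nu_0 t}$ yields the stated estimate.

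The main obstacle is not the computation above but the rigorous justification that the characteristic formula indeed defines \emph{the} weak solution of \eqref{52} (and the classical one under the continuity hypotheses of Theorem \ref{Maincutoff}): one must handle the behaviour on the grazing set $\gamma_0$, where $t_\b$ is discontinuous though negligible, verify that $t_\b(x,v)$ and $x_\b(x,v)$ are measurable and a.e.\ well defined on $\Omega\times\R^3$, and check that the trace of $h$ on $\gamma_-$ coincides with $wWg$. All of this is carried out exactly as in \cite{Guo2009}, with the hard-potential collision frequency there replaced by $q|v|^2\<v\>^{-1}+\nu(v)$, which enjoys the same lower bound $\gtrsim\<v\>$; hence no new difficulty arises and the lemma follows.
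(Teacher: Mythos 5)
Your proposal is correct and matches the paper's treatment: the paper itself proves this lemma only by observing that $\nu_0\<v\>\le q|v|^2\<v\>^{-1}+\nu(v)\le\nu_1\<v\>$, so the effective collision frequency behaves like the hard-potential one, and then invoking the arguments of \cite[Lemma 12, Lemma 13, Theorem 6]{Guo2009}. Your explicit characteristics computation, the two-case split at $t_\b$, and the recombination $e^{-\nu_0 t_\b}e^{-\nu_0(t-t_\b)}=e^{-\nu_0 t}$, with the technical trace/measurability issues deferred to \cite{Guo2009}, is exactly this route spelled out.
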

%

\begin{Lem}\label{Lem52}
	Lem $\Omega$ be strictly convex as in \eqref{convex}. Let $h_0(x,v)$ be continuous in $\overline\Omega\times\R^3\setminus\gamma_0$, $g$ be continuous in $[0,\infty)\times\{\pa\Omega\times\R^3\setminus\gamma_0\}$, $p(t,x,v)$ be continuous in the interior of $[0,\infty)\times\pa\Omega\times\R^3$ and $\sup_{[0,\infty)\times\pa\Omega\times\R^3}|\frac{p(t,x,v)}{q{|v|^2}\<v\>^{-1}+\nu(v)}|<\infty$. Let $h(t,x,v)$ be the solution to 
	\begin{align*}
		\{\pa_t+v\cdot\na_x+q{|v|^2}\<v\>^{-1}+\nu\}h=p,\quad h(0)=h_0,\quad h|_{\gamma_-}=wWg. 
	\end{align*}
We further assume the compatibility condition on $\gamma_-$:
\begin{align*}
	h_0(x,v) = \{wWg\}(0,x,v). 
\end{align*}
Then $h(t,x,v)$ is continuous on $[0,\infty)\times\{\overline\Omega\times\R^3\setminus\gamma_0\}$ and can be written as 
\begin{align*}
	h(t,x,v)=e^{-(q{|v|^2}\<v\>^{-1}+\nu(v))t}h_0(x-tv,v) + \int^t_0e^{-(q{|v|^2}\<v\>^{-1}+\nu(v))(t-s)}p(s,x-(t-s)v,v)\,ds
\end{align*}
if $t\le t_\b$ and 
\begin{multline*}
	h(t,x,v)=e^{-(q{|v|^2}\<v\>^{-1}+\nu(v))t_\b}\{wWg\}(t-t_\b,x_\b,v)\\+\int^t_{t-t_\b}e^{-(q{|v|^2}\<v\>^{-1}+\nu(v))(t-s)}p(s,x-(t-s)v,v)\,ds
\end{multline*}
if $t>t_\b$. 

\end{Lem}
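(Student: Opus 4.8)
The plan is to integrate the equation along the straight--line characteristics and then deduce continuity from the geometric regularity of the backward exit time. Fix $(t,x,v)$ with $(x,v)\notin\gamma_0$ and recall the trajectory $[X(s),V(s)]=[X(s;t,x,v),V(s;t,x,v)]=[x-(t-s)v,v]$. Along it the equation $\{\pa_t+v\cdot\na_x+q{|v|^2}\<v\>^{-1}+\nu\}h=p$ becomes the linear ODE
\[
\frac{d}{ds}\big[h(s,X(s),v)\big]=-\big(q{|v|^2}\<v\>^{-1}+\nu(v)\big)h(s,X(s),v)+p(s,X(s),v),
\]
which I would solve with the integrating factor $e^{(q{|v|^2}\<v\>^{-1}+\nu(v))s}$. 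If $t\le t_\b(x,v)$ the backward trajectory stays in $\Omega$ on $[0,t]$, so I integrate from $s=0$ and insert the initial datum $h_0(x-tv,v)$; if $t>t_\b(x,v)$ the trajectory meets $\pa\Omega$ at $s=t-t_\b$ at the point $x_\b=x-t_\b v$, so I integrate from $s=t-t_\b$ and insert the inflow datum $\{wWg\}(t-t_\b,x_\b,v)$ (admissible since $v\cdot n(x_\b)\le 0$, and $<0$ off $\gamma_0$, so $(x_\b,v)\in\gamma_-$). This produces exactly the two displayed formulas, and conversely the function so defined is readily checked to be the (unique) solution of \eqref{52} with the prescribed data.

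Next I would verify that the two formulas glue continuously across $\{t=t_\b\}$. At $t=t_\b(x,v)$ one has $t-t_\b=0$ and $x-tv=x_\b\in\pa\Omega$, so the Duhamel integrals agree, while the boundary terms reduce to $e^{-(q{|v|^2}\<v\>^{-1}+\nu(v))t_\b}h_0(x_\b,v)$ and $e^{-(q{|v|^2}\<v\>^{-1}+\nu(v))t_\b}\{wWg\}(0,x_\b,v)$; these are equal by the compatibility condition $h_0=\{wWg\}(0,\cdot,\cdot)$ on $\gamma_-$. Hence the representation defines a single function on $[0,\infty)\times\{\overline\Omega\times\R^3\setminus\gamma_0\}$.

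It then remains to prove this function is continuous. The exponential factor is continuous because, by $\nu(v)\approx\<v\>^\gamma$, the map $v\mapsto q{|v|^2}\<v\>^{-1}+\nu(v)$ is continuous, and by the two--sided bound $\nu_0\<v\>\le q{|v|^2}\<v\>^{-1}+\nu\le\nu_1\<v\>$ it is bounded below; the data $h_0$, $g$, $p$ are continuous by hypothesis and the assumption $\sup|p/(q{|v|^2}\<v\>^{-1}+\nu)|<\infty$ makes the Duhamel integrals finite and continuous. So the only real issue is the continuity of the backward exit time $t_\b(x,v)$ and exit position $x_\b(x,v)=x-t_\b v$ on $\{\overline\Omega\times\R^3\}\setminus\gamma_0$, the transition at $\{t=t_\b\}$ having been handled above. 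Here strict convexity \eqref{convex} is the key: for $x\in\pa\Omega$ with $v\cdot n(x)<0$, setting $\zeta(s)=\xi(x-sv)$ one has $\zeta(0)=0$, $\zeta'(0)=-\na_x\xi(x)\cdot v>0$, and $\zeta''(s)=\sum_{i,j}\pa_{x_ix_j}\xi(x-sv)v_iv_j\ge c_\xi|v|^2>0$, so $\zeta$ is strictly convex, returns to $0$ transversally, and the implicit function theorem yields $t_\b\in C^1$ (in particular continuous) near $(x,v)$; the interior case $x\in\Omega$ is similar. Composing with the continuous factors above gives continuity of $h$ on $[0,\infty)\times\{\overline\Omega\times\R^3\setminus\gamma_0\}$. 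This is the argument of \cite[Lemma 13]{Guo2009}, the only change being that $\nu(v)$ is everywhere replaced by $q{|v|^2}\<v\>^{-1}+\nu(v)$, which by the bound just recalled plays the role of the hard--potential collision frequency, so no further modification is needed.

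The main obstacle is the geometric analysis of $t_\b$ and $x_\b$ near the grazing set: as $(x,v)\to\gamma_0$ the exit time need not be continuous, which is exactly why $\gamma_0$ is excluded, and establishing continuity up to (but not including) $\gamma_0$ rests on the quantitative transversality provided by strict convexity via the computation of $\zeta''$ above. Everything else is the routine integration of a linear ODE along straight characteristics.
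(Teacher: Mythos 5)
Your proposal is correct and takes essentially the same approach as the paper: the authors prove this lemma only by observing $\nu_0\langle v\rangle\le q|v|^2\langle v\rangle^{-1}+\nu(v)\le\nu_1\langle v\rangle$ and invoking the characteristics argument of \cite[Lemma 13]{Guo2009}, which is precisely the Duhamel-along-characteristics formula, the compatibility gluing at $t=t_\b$, and the strict-convexity/implicit-function-theorem continuity of $(t_\b,x_\b)$ that you write out. The only (cosmetic) slip is in the geometric setup: transversality must be checked at the exit point, i.e.\ for $x\in\Omega$ or $(x,v)\in\gamma_+$ strict convexity of $\zeta(s)=\xi(x-sv)$ forces $\zeta'(t_\b)>0$, that is $v\cdot n(x_\b)<0$, whereas in the configuration you describe ($x\in\partial\Omega$ with $v\cdot n(x)<0$) one has $t_\b=0$ and $\zeta$ never returns to zero, so that case is trivial and needs no implicit function theorem.
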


\begin{Thm}\label{Thm51}
	Assume $\<v\>w^{-2}\in L^1(\R^3_v)$. Let $U(t)h_0$ be the solution to the weighted linearized cutoff Boltzmann equation 
	\begin{align*}
		\{\pa_t+v\cdot\na_x+q{|v|^2}\<v\>^{-1}+\nu-K_{w,W}\}U(t)h_0=0,\quad U(0)h_0=h_0,\quad \{U(t)h_0\}|_{\gamma_-}=wWg,
	\end{align*}
where $K_{w,W}$ is given by $K_{w,W}h=wWK(\frac{h}{wW})$. Then there exists $0<\lam<\lam_0$ such that 
\begin{align}\label{54}
	\sup_{t\ge 0}e^{\lam t}\|U(t)h_0\|_{L^\infty_{x,v}}\le C\big\{\|h_0\|_{L^\infty_{x,v}}+\sup_{s\ge 0}e^{\lam_0s}\|wWg(s)\|_{L^\infty(\gamma_-\times\R^3)}\big\},
\end{align}
where $\lam_0>0$ is given in \eqref{prioricutoff}. 
\end{Thm}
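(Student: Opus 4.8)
The plan is to run the $L^2$--$L^\infty$ bootstrap of Guo \cite{Guo2009} in the weighted variable $h=wWf$, using the $L^2$ decay of $f=U(t)h_0/(wW)$ furnished by Theorem \ref{Thm41} as the driving input and exploiting that, since $\nu_0\<v\>\le q|v|^2\<v\>^{-1}+\nu(v)\le\nu_1\<v\>$, the multiplier $\Sigma(v):=q|v|^2\<v\>^{-1}+\nu(v)$ behaves like a hard-potential collision frequency. Set $B_0:=\|h_0\|_{L^\infty_{x,v}}+\sup_{s\ge0}e^{\lam_0 s}\|wWg(s)\|_{L^\infty(\gamma_-)}$. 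Since $\<v\>w^{-2}\in L^1$ and $W\ge 1/C_\Omega$ give $\|f_0\|_{L^2_{x,v}}\lesssim\|h_0\|_{L^\infty_{x,v}}$ and $\|g(s)\|_{L^2(\gamma_-)}\lesssim e^{-\lam_0 s}B_0$, Theorem \ref{Thm41} applied to $f$ yields $\|f(s)\|_{L^2_{x,v}}\lesssim e^{-\lam_2 s}B_0$ for some $\lam_2>0$; fix a rate $0<\lam<\min\{\nu_0/2,\lam_0,\lam_2\}$. First I would record, via Lemma \ref{Lem52} with $p=K_{w,W}h$ and the characteristic $X_1=x-(t-s)v$,
\begin{align*}
	h(t,x,v)&=\1_{t\le t_\b}e^{-\Sigma(v)t}h_0(x-tv,v)+\1_{t>t_\b}e^{-\Sigma(v)t_\b}\{wWg\}(t-t_\b,x_\b,v)\\
	&\quad+\int_{t_\ast}^{t}e^{-\Sigma(v)(t-s)}\big(K_{w,W}h\big)(s,X_1,v)\,ds,\qquad t_\ast:=\max\{0,t-t_\b\},
\end{align*}
so that it suffices to establish the a priori bound $e^{\lam t}\|h(t)\|_{L^\infty_{x,v}}\le\theta\,M(t)+C B_0$ with $M(t):=\sup_{0\le s\le t}e^{\lam s}\|h(s)\|_{L^\infty_{x,v}}$ and some $\theta<\tfrac12$; a preliminary iteration on bounded time intervals shows $M(t)<\infty$, whereupon absorption gives \eqref{54}.

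Second, I would record the kernel estimate for $K_{w,W}$. Writing $Kf(v)=\int_{\R^3}k(v,v')f(v')\,dv'$ with Grad's classical bound $\int_{\R^3}k(v,v')e^{|v-v'|^2/40}\,dv'\lesssim\<v\>^{-1}$ (see \cite{Guo2003}) and using $W(x,v)/W(x,v')\in[e^{-2qC_\Omega},e^{2qC_\Omega}]$ from \eqref{Wul}, one obtains $K_{w,W}h(v)=\int_{\R^3}k_{w,W}(x,v,v')h(v')\,dv'$ with
\begin{align*}
	\sup_{x\in\Omega}\int_{\R^3}|k_{w,W}(x,v,v')|\big(1+|v'|\big)e^{|v-v'|^2/100}\,dv'\lesssim 1.
\end{align*}
In particular $\|K_{w,W}h\|_{L^\infty_{x,v}}\lesssim\|h\|_{L^\infty_{x,v}}$, and for every $N\ge1$ the part of the above integral over $\{|v'|\ge N\}\cup\{|v-v'|\ge N\}$ is $\le C/N$.

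The central step is a double Duhamel iteration. Substituting the representation above for $h(s,X_1,v')$ inside $K_{w,W}h(s,X_1,v)$ produces, besides terms in which the transport carries $h_0$ or $wWg$ — bounded by $C e^{-\lam t}B_0$ after absorbing $\lam<\min\{\nu_0,\lam_0\}$, since $e^{-\Sigma(v)t_\b-\lam_0(t-t_\b)}\le e^{-\min\{\nu_0,\lam_0\}t}$ and $\int|k_{w,W}|\lesssim1$ — the double integral
\begin{align*}
	&\int_{t_\ast}^{t}\!\int_{\R^3}\!\int_{s_\ast}^{s}\!\int_{\R^3}e^{-\Sigma(v)(t-s)-\Sigma(v')(s-s_1)}\\
	&\qquad\times k_{w,W}(X_1,v,v')\,k_{w,W}(X_2,v',v'')\,h(s_1,X_2,v'')\,dv''\,ds_1\,dv'\,ds,
\end{align*}
where $X_2=X_1-(s-s_1)v'=x-(t-s)v-(s-s_1)v'$ and $s_\ast$ is the backward exit time of the second characteristic (its exiting portion again only produces the controlled $wWg$ term). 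I would then split: on $\{|v|\ge N\}\cup\{|v'|\ge N\}\cup\{|v''|\ge N\}\cup\{|v-v'|\ge N\}\cup\{|v'-v''|\ge N\}$ the kernel bound gives a factor $C/N$ and the time integrations a factor controlled by $(\nu_0-\lam)^{-1}$, so after multiplying by $e^{\lam t}$ this piece is $\le\tfrac{C}{N}M(t)$; on the complementary compact-velocity set but with $s-s_1\le\delta$ the inner time measure is $\le\delta$, contributing $\le\tfrac{C\delta}{\nu_0-\lam}M(t)$; and on the remaining set $\{$all velocities $\le N,\ s-s_1\ge\delta\}$ one uses the observation that the spatial argument $X_2$ of $h(s_1,X_2,v'')$ depends on the \emph{middle} velocity $v'$ through the invertible affine map $v'\mapsto z:=X_2$ with Jacobian $(s-s_1)^{-3}\le\delta^{-3}$. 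Changing variables $v'\mapsto z$, restricting $z\in\Omega$, bounding $|k_{w,W}|\le C_N$ there and $|h(s_1,z,v'')|\le C_N|f(s_1,z,v'')|$ for $|v''|\le N$, and applying Cauchy--Schwarz in $v''$ and then in $z$ together with $\int_{\R^3}|k_{w,W}(X_1,v,v'(z))|^2\1_{|v'(z)|\le N}\,dz\lesssim C_N(s-s_1)^3$, one bounds this piece by $C_{N,\delta}\int_{0}^{t}e^{-\tfrac{\nu_0}{2}(t-s_1)}\|f(s_1)\|_{L^2_{x,v}}\,ds_1$, which after the $e^{\lam t}$ factor and $\|f(s_1)\|_{L^2_{x,v}}\lesssim e^{-\lam_2 s_1}B_0$ is $\le C_{N,\delta}B_0$ because $\lam<\nu_0/2$.

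Collecting the pieces yields $e^{\lam t}\|h(t)\|_{L^\infty_{x,v}}\le\big(\tfrac{C}{N}+\tfrac{C\delta}{\nu_0-\lam}\big)M(t)+C_{N,\delta}B_0$; choosing $N$ large and then $\delta$ small forces the first coefficient below $\tfrac12$, and since the estimate holds at every time $\le t$, absorption gives $M(t)\le 2C_{N,\delta}B_0$, i.e. \eqref{54}. I expect the main obstacle to be the central step, specifically (i) justifying the change of variables $v'\mapsto z$ in the presence of the physical boundary — tracking where the broken straight backward characteristic issuing from $(X_1,v')$ lies in $\Omega$ and verifying that its exiting portion contributes only the already-controlled boundary term — and (ii) the bookkeeping ensuring that every surviving $\sup\|h\|_{L^\infty}$ comes multiplied by a small constant while the one term without such a factor is the $\int\|f(s_1)\|_{L^2_{x,v}}$ term, which decays by Theorem \ref{Thm41}. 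The remaining ingredients (Grad's kernel estimate, the free transport bounds of Lemmas \ref{Lem51}--\ref{Lem52}, the compact-support cutoffs $\chi_R$) are routine once the hard-potential-type damping $\Sigma(v)$ is in hand.
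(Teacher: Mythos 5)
Your proposal is correct and follows essentially the same route as the paper: the paper proves Theorem \ref{Thm51} by observing that $q|v|^2\<v\>^{-1}+\nu$ acts as a hard-potential-type collision frequency and then invoking the $L^2$--$L^\infty$ bootstrap of Guo \cite[Theorem 6]{Guo2009}, crucially combined with the $L^2$ decay of Theorem \ref{Thm41} and the transport representations of Lemmas \ref{Lem51}--\ref{Lem52}, which is exactly the double Duhamel iteration, kernel splitting, and $v'\mapsto X_2$ change of variables you carry out. Your write-up simply fills in the details that the paper delegates to the citation, so no genuinely different idea or gap is present.
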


Notice that in the proof of Theorem \ref{Thm51}, we crucially apply Theorem \ref{Thm41} and Lemmas \ref{Lem51} and \ref{Lem52}. Noticing $W\approx 1$, the behavior of $K_{w,W}$ is similar to $K_{w,1}$ and the similar arguments in \cite[Theorem 6, pp.752]{Guo2009} can be applied. 
Now we can prove the exponential decay for cutoff Boltzmann equation.

\begin{proof}
	[Proof of Theorem \ref{Maincutoff}]
	In order to prove the existence of equation \eqref{cutoff}, we shall apply iteration $\{h^n(t,x,v)\}$:
	\begin{align}\label{55}
		\{\pa_t+v\cdot\na_x+q{|v|^2}\<v\>^{-1}+\nu-K_{w,W}\}h^{n+1}=wW\Gamma\Big(\frac{h^n}{wW},\frac{h^n}{wW}\Big),
	\end{align}
	with $h^0=0$ and $h^{n+1}|_{t=0}=h_0$, $h^{n+1}|_{\gamma_-}=wWg$. 
	In order to solve \eqref{55}, we split $h^{n+1}=h^{n+1}_g+h^{n+1}_\Gamma$ for $n\ge 0$, where $h^{n+1}_g$ solves the homogeneous linear weighted cutoff Boltzmann equation 
	\begin{align}\label{56}
		\{\pa_t+v\cdot\na_x+q{|v|^2}\<v\>^{-1}+\nu-K_{w,W}\}h^{n+1}_g = 0, \\
		\notag h^{n+1}_g|_{\gamma_-}=wWg, \quad h^{n+1}_g|_{t=0}=h_0,	
	\end{align}
while $h^{n+1}_\Gamma$ satisfies the inhomogeneous weighted Boltzmann equation with {\it zero} boundary and initial values:
\begin{align}\label{56a}
	\{\pa_t+v\cdot\na_x+q{|v|^2}\<v\>^{-1}+\nu-K_{w,W}\}h^{n+1}_\Gamma &= wW\Gamma\Big(\frac{h^n}{wW},\frac{h^n}{wW}\Big),\\
	\notag \quad h^{n+1}_\Gamma|_{\gamma_-}=0,\  h^{n+1}_\Gamma|_{t=0}&=0.
\end{align}
	Applying Theorem \ref{Thm51} to \eqref{56}, there exists $0<\lam<\lam_0$ such that 
	\begin{align}\label{57}
		\sup_{t\ge 0}e^{\lam t}\|h^{n+1}_g(t)\|_{L^\infty_{x,v}}\le C\big\{\|h_0\|_{L^\infty_{x,v}}+\sup_{s\ge 0}e^{\lam_0s}\|wWg(s)\|_{L^\infty(\gamma_-\times\R^3)}\big\}.
	\end{align}
	To find the estimate on $h^{n+1}_\Gamma$, we denote $U(t)$ to be the solution operator for the linear weighted Boltzmann equation 
	\begin{align*}
		\{\pa_t+v\cdot\na_x+q{|v|^2}\<v\>^{-1}+\nu-K_{w,W}\}h =0, \quad h|_{\gamma_-}=0,\ h|_{t=0}=0.
	\end{align*}
Then by Duhamel's principle, the solution to \eqref{56a} can be written as 
\begin{align}\label{57a}
	h^{n+1}_\Gamma = \int^t_0U(t-s)wW\Gamma\Big(\frac{h^n(s)}{wW},\frac{h^n(s)}{wW}\Big)\,ds. 
\end{align}
Recall the basic estimate on $\Gamma$ from \cite[Lemma 5, pp. 730]{Guo2009}:
\begin{align}\label{58}
	|w\Gamma(g_1,g_2)|\le C\<v\>^\gamma |wg_1|_{L^\infty_v}|wg_2|_{L^\infty_v}.
\end{align}
Noticing $W\approx 1$, we apply the energy estimate \eqref{54} to \eqref{57a} and deduce that  
\begin{align}\label{57b}\notag
	\|h^{n+1}_\Gamma\|_{L^\infty_{x,v}}&\le C\int^t_0e^{-\lam(t-s)}\Big\|wW\Gamma\Big(\frac{h^n(s)}{wW},\frac{h^n(s)}{wW}\Big)\Big\|_{\infty_{x,v}}ds\\
	&\notag\le C\int^t_0e^{-\lam(t-s)}e^{-2\lam s}\,ds\ \Big\{\sup_{s\ge 0}e^{\lam s}\|h^n(s)\|_{L^\infty_{x,v}}\Big\}^2\\
	&\le Ce^{-\lam t}\Big\{\sup_{s\ge 0}e^{\lam s}\|h^n(s)\|_{L^\infty_{x,v}}\Big\}^2.
\end{align}
In view of \eqref{57} and \eqref{57b}, by induction, there exists $\delta>0$ small enough such that if \eqref{prioricutoff} is valid, then 
\begin{align*}
	\sup_{n\ge 1}\sup_{t\ge 0}e^{\lam t}\|h^{n+1}_g(t)\|_{L^\infty_{x,v}}\le C\big\{\|h_0\|_{L^\infty_{x,v}}+\sup_{s\ge 0}e^{\lam_0s}\|wWg(s)\|_{L^\infty(\gamma_-\times\R^3)}\big\}. 
\end{align*}
To find the limit of $h^n$, we subtract $h^{n+1}-h^n$ to obtain 
\begin{align*}
	&\{\pa_t+v\cdot\na_x+q{|v|^2}\<v\>^{-1}+\nu-K_{w,W}\}\{h^{n+1}-h^n\}\\&\qquad\qquad\qquad\qquad=wW\Gamma\Big(\frac{h^n-h^{n-1}}{wW},\frac{h^n}{wW}\Big)+wW\Gamma\Big(\frac{h^{n-1}}{wW},\frac{h^n-h^{n-1}}{wW}\Big),\\
	&\{h^{n+1}-h^n\}|_{t=0}=0, \quad \{h^{n+1}-h^n\}|_{\gamma_-}=0. 
\end{align*}
Similar to \eqref{57b}, we deduce that 
\begin{align}\label{58b}\notag
	&\quad\,\|h^{n+1}-h^n\|_{L^\infty_{x,v}}\\\notag&\le \Big\|\int^t_0U(t-s)\Big(wW\Gamma\Big(\frac{h^n-h^{n-1}}{wW},\frac{h^n}{wW}\Big)(s)+wW\Gamma\Big(\frac{h^{n-1}}{wW},\frac{h^n-h^{n-1}}{wW}\Big)(s)\Big)\Big\|_{L^\infty_{x,v}}\\
	&\le Ce^{-\lam t}\sup_{s\ge 0}\big\{e^{\lam s}\|h^n(s)\|_{L^\infty_{x,v}}+e^{\lam s}\|h^{n-1}(s)\|_{L^\infty_{x,v}}\big\} \sup_{s\ge 0}e^{\lam s}\|h^n(s)-h^{n-1}(s)\|_{L^\infty_{x,v}}. 
\end{align}
Hence, by induction, $\{e^{\lam t}h^n\}$ is a Cauchy sequence, and the limit $h$ is the solution to cutoff Boltzmann equation \eqref{cutoff}. Then $f=\frac{h}{wW}$ is the desired solution to \eqref{1} with inflow boundary condition \eqref{2}. 

\medskip 

To prove the uniqueness of $h$ (and hence $f$), we assume that $\wt h$ is another solution to the full Boltzmann equation \eqref{cutoff} and $\sup_{0\le t\le T_0}\|\wt h(t)\|_{L^\infty_{x,v}}$ is sufficiently small. Then $h-\wt h$ satisfies 
\begin{align*}
	&\{\pa_t+v\cdot\na_x+q{|v|^2}\<v\>^{-1}+\nu-K_{w,W}\}\{h-\wt h\}\\&\qquad\qquad\qquad\qquad 
	= wW\Gamma\Big(\frac{h-\wt h}{wW},\frac{h}{wW}\Big)+wW\Gamma\Big(\frac{\wt h}{wW},\frac{h-\wt h}{wW}\Big),\\
	&\{h-\wt h\}|_{t=0}=0, \quad \{h-\wt h\}|_{\gamma_-}=0.
\end{align*}
Similar to \eqref{58b}, for $0\le t\le T$, we have 
\begin{align*}
	\|h-\wt h\|_{L^\infty_{x,v}}&\le C\Big\|\int^t_0U(t-s)\Big(wW\Gamma\Big(\frac{h-\wt h}{wW},\frac{h}{wW}\Big)(s)+wW\Gamma\Big(\frac{\wt h}{wW},\frac{h-\wt h}{wW}\Big)\Big)(s)\Big\|_{L^\infty_{x,v}}\\
	&\le C_T\sup_{s\ge 0}\big\{\|h(s)\|_{L^\infty_{x,v}}+e^{\lam s}\|\wt h(s)\|_{L^\infty_{x,v}}\big\} \sup_{s\ge 0}\|h(s)-\wt h(s)\|_{L^\infty_{x,v}}. 
\end{align*}
This implies that $\sup_{0\le t\le T}\|h-\wt h\|_{L^\infty_{x,v}}=0$ whenever $\sup_{0\le t\le T_0}\|h(t)\|_{L^\infty_{x,v}}$ and $\sup_{0\le t\le T_0}\|\wt h(t)\|_{L^\infty_{x,v}}$ are sufficiently small. This shows the uniqueness for \eqref{cutoff}.

\medskip 
Finally, if $\Omega$ is strictly convex, we claim that $h^{n+1}$ is continuous in $[0,\infty)\times\{\overline\Omega\times \R^3\setminus\gamma_0\}$ as in \cite[pp. 801]{Guo2009}. For a fixed $n\ge 1$, we use another iteration to solve $h^{n+1}$ as the limit of $n'\to\infty$:
\begin{align*}
	\{\pa_t+v\cdot\na_x+q{|v|^2}\<v\>^{-1}+\nu\}h^{n+1,n'+1}=K_{w,W}h^{n+1,n'}+wW\Gamma\Big(\frac{h^n}{wW},\frac{h^n}{wW}\Big),
\end{align*}
	with $h^{n+1,0}=0$ and $h^{n+1,n'+1}|_{t=0}=h_0$, $h^{n+1,n'+1}|_{\gamma_-}=wWg$. By induction over $n'$, we assume that $h^{n+1,n'}$ and $h^n$ are continuous in $[0,\infty)\times\{\overline\Omega\times \R^3\setminus\gamma_0\}$. Then it is straightforward to verify that $K_{w,W}h^{n+1,n'}$ and $\nu^{-1}wW\Gamma\Big(\frac{h^n}{wW},\frac{h^n}{wW}\Big)$ are continuous in the interior $[0,\infty)\times\Omega\times\R^3$; see \cite[pp. 802]{Guo2009}. 
	Moreover, we have from \eqref{58} that $\sup_{[0,\infty)\times\Omega\times\R^3}\big|\nu^{-1}wW\Gamma\Big(\frac{h^n}{wW},\frac{h^n}{wW}\Big)\big|<\infty$. Then by Lemma \ref{Lem52}, we deduce that $h^{n+1,n'+1}$ is continuous in $[0,\infty)\times\{\overline\Omega\times\R^3\setminus\gamma_0\}$. Finally, the difference $h^{n+1,n'+1}-h^{n+1,n'}$ satisfies 
	\begin{align*}
		\{\pa_t+v\cdot\na_x+q{|v|^2}\<v\>^{-1}+\nu\}\{h^{n+1,n'+1}-h^{n+1,n'}\}=K_{w,W}\{h^{n+1,n'}-h^{n+1,n'-1}\}, 
	\end{align*}
with {\it zero} boundary and initial conditions. Then by the boundedness of $K_{w,W}$ (for instance, \eqref{nu2}) and Duhamel's principle, we have 
\begin{align*}
	\sup_{0\le t\le T}\|h^{n+1,n'+1}-h^{n+1,n'}\|_{L^\infty_{x,v}}\le C_K\int^T_0\|h^{n+1,n'}-h^{n+1,n'-1}\|_{L^\infty_{x,v}}\,ds\le \cdots\le \frac{(C_KT)^{n'}}{n'!}, 
\end{align*}
for any $T>0$. 
Thus, $\{h^{n+1,n'+1}\}_{n'}$ is Cauchy in $L^\infty([0,T]\times\Omega\times\R^3)$, and its limit $h^{n+1}$ is continuous in $[0,\infty)\times\{\overline\Omega\times\R^3\setminus\gamma_0\}$. Then $h$ preserves the continuity in $[0,\infty)\times\{\overline\Omega\times\R^3\setminus\gamma_0\}$ from the uniform convergence. 

\smallskip 

The positivity can be proved in the same way as in \cite[pp. 802]{Guo2009} and the details are omitted for brevity.  This then completes the proof of Theorem \ref{Maincutoff}.	
\end{proof}

\medskip
\noindent {\bf Acknowledgements.} 
DQD was supported by Direct Grant from BIMSA and YMSC. 
RJD was partially supported by the NSFC/RGC Joint Research Scheme (N\_CUHK409/19) from RGC in Hong Kong and a Direct Grant from CUHK.

\end{document}